\documentclass[11pt,reqno]{amsart}
\usepackage[usenames,dvipsnames,svgnames,table]{xcolor}
\usepackage{amssymb,amsfonts,amsmath,amscd,mathrsfs}
\usepackage{mathtools,verbatim,enumitem,url}
\usepackage[all]{xy}

\usepackage{tikz-cd}
\usepackage{hyperref}

\setlength{\oddsidemargin}{-2mm}
\setlength{\evensidemargin}{-2mm}
\setlength{\topmargin}{-1.0cm}
\setlength{\headheight}{0.5cm}
\setlength{\headsep}{1.0cm}
\setlength{\textwidth}{16.5cm}
\setlength{\textheight}{23.2cm}
\setlength{\baselineskip}{20pt}
\thispagestyle{plain}

\newtheorem{theorem}{Theorem}[section]
\newtheorem{lemma}[theorem]{Lemma}     
\newtheorem{corollary}[theorem]{Corollary}
\newtheorem{proposition}[theorem]{Proposition}

\theoremstyle{definition}
\newtheorem{definition}[theorem]{Definition}
\newtheorem{remark}[theorem]{Remark}

\newtheorem{notation}[theorem]{Notation} 
\newtheorem{discussion}[theorem]{Discussion}
\newtheorem{convention}[theorem]{Convention}

\newcommand{\mbk}{\mbox{$\Bbbk$}}
\newcommand{\mbl}{\mbox{$\mathbb{L}$}}
\newcommand{\mbn}{\mbox{$\mathbb{N}$}}
\newcommand{\mbq}{\mbox{$\mathbb{Q}$}}
\newcommand{\mbz}{\mbox{$\mathbb{Z}$}}

\newcommand{\mon}{\mbox{${\rm x}$}}
\newcommand{\monx}{\mbox{$\underline{\rm x}$}}
\newcommand{\monset}{\mbox{${\rm Mon}$}}

\newcommand{\mfa}{\mbox{$\mathfrak{a}$}}
\newcommand{\mfm}{\mbox{$\mathfrak{m}$}}

\newcommand{\msb}{\mbox{$\mathscr{B}$}}
\newcommand{\msc}{\mbox{$\mathscr{C}$}}
\newcommand{\msd}{\mbox{$\mathscr{D}$}}

\newcommand{\fac}{\mbox{${\rm F}$}}
\newcommand{\frob}{\mbox{${\rm Frob}$}}

\newcommand{\lser}{\mbox{$[\mkern-1.5mu [ $}}
\newcommand{\rser}{\mbox{$]\mkern-1.5mu ] $}}
\newcommand{\llaur}{\mbox{$(\!( $}}
\newcommand{\rlaur}{\mbox{$)\!) $}}
\newcommand{\ord}{\mbox{${\rm ord}$}}
\newcommand{\ds}{\mbox{${\rm ds}$}}
\newcommand{\sord}{\mbox{${\rm ord}_{\sigma}$}}
\newcommand{\lform}{\mbox{${\rm LF}$}}

\newcommand{\lterm}{\mbox{${\rm LT}$}}
\newcommand{\lmon}{\mbox{${\rm LM}$}}
\newcommand{\supp}{\mbox{${\rm Supp}$}}

\newcommand{\nums}{\mbox{$\mathcal{S}$}}
\newcommand{\charac}{\mbox{${\rm char}$}}

\newcommand{\im}{\mbox{${\rm Im}$}}
\newcommand{\height}{\mbox{${\rm ht}$}}
\newcommand{\grade}{\mbox{${\rm grade}$}}
\newcommand{\depth}{\mbox{${\rm depth}$}}
\newcommand{\length}{\mbox{${\rm length}$}}
\newcommand{\rank}{\mbox{${\rm rank}$}}
\newcommand{\card}{\mbox{${\rm card}$}}
\newcommand{\nf}{\mbox{${\rm NF}$}}
\newcommand{\nfmora}{\mbox{${\rm NFMora}$}}

\title[Prime ideals of Moh and the characteristic of the field]
{Prime ideals of Moh and the characteristic of the field}

\author[L. Gonz\'alez]{Laura Gonz\'alez} 
\author[F. Planas-Vilanova]{Francesc Planas-Vilanova}

\address{Departament de Matem\`atiques, Universitat Polit\`ecnica de
Catalunya. Diagonal 647, ETSEIB, E-08028 Barcelona.} 
\email{laura.gonzalez.hernandez@upc.edu}
\email{francesc.planas@upc.edu} 
\thanks{Both authors are partially supported by Grant PID2023-146936NB-I00 financed by the Spanish State Agency MCIN/AEI/10.13039/501100011033/ FEDER, UE and AGAUR project 2021 SGR 00603}

\date{\today}

\subjclass[2020]{13E05, 13H05, 13J05, 13P10}

\keywords{Power series ring, prime ideal, minimal generating set, characteristic $p$}

\begin{document}

\begin{abstract}
We reprove and generalize a result of Moh which gives a lower bound on the minimal number of generators of an ideal in a power series ring in three variables $x,y,z$ over a field $\mbk$. As a consequence, in each characteristic of the field $\mbk$, we obtain a minimal generating set for the prime ideal $P_3$ of Moh. We deduce that the minimal number of generators of $P_3$ might decrease depending on the characteristic of $\mbk$. This contradicts a statement of Sally and leaves as an open problem to find families of prime ideals in $\mbk\lser x,y,z\rser$ with an unbounded minimal number of generators, when $\mbk$ has characteristic other than zero. Finally, we show that these minimal generating sets of $P_3$ are standard basis with the negative degree reverse lexicographic order.
\end{abstract}

\maketitle

\section{Introduction}

Let $\mbk$ be a field, $n\geq 1$ be an odd integer, $m=(n+1)/2$ and $\lambda$ an integer greater than $n(n+1)m$, with $\gcd(\lambda,m)=1$. In his papers \cite{moh1,moh2}, Moh considers the following family 
of $\mbk$-algebra morphisms $\rho_n:\mbk\lser x,y,z\rser\to\mbk\lser t\rser$, where $\rho_n(x)=t^{nm}+t^{nm+\lambda}$, $\rho_n(y)=t^{(n+1)m}$ and $\rho_n(z)=t^{(n+2)m}$. The {\em prime ideal $P_n$ of Moh} is defined as $P_n:=\ker(\rho_n)$. Let
$\mu(I)$ stand for the minimal number of generators of an ideal $I$. 
Moh proves that, if $\mbk$ has characteristic zero, then $\mu(P_n)=n+1$. Shortly after,  
Sally gives in her book \cite{sally} an excellent overview of Moh's work. In a Remark in pages 61-62, she writes
``Once Moh's primes are constructed in $\mbk\lser x,y,z\rser$, with $\mbk$ of characteristic 0, then one can show that they also exist in $\tilde{\mbk}\lser x,y,z\rser$, for $\tilde{\mbk}$ a field of any characteristic.'' In her discussion, she affirms that, if $\overline{P}_n$ is the corresponding ideal in characteristic $p>0$, then $\mu(\overline{P}_n)\geq \mu(P_n)$. Much more recently, Mehta, Saha and Sengupta in \cite{mss} revisit Moh's primes, always in characteristic zero, focusing on giving an iterative procedure for finding a set of polynomials 
that minimally generates $P_n$. Note that in his papers Moh does not affirm, at least explicitly,  that the ideals $P_n$ can be generated by polynomials. 

In the present paper we first study, what is to us, the key result of Moh in \cite{moh1}: a lower bound on the minimal number of generators of an ideal $Q$ of $\mbk\lser x,y,z\rser$ in terms of the dimension of some $\mbk$-vector spaces $V_r=V_r(\sigma,Q)$ associated to an order $\sigma$ in $\mbk\lser x,y,z\rser$ and to the ideal $Q$ 
(see \cite[Theorem~4.3]{moh1}). It is readily seen that that theorem holds true in any dimension and for any characteristic of $\mbk$. Our main contribution here is twofold. In
Theorem~\ref{moh43-1}, given a generating set $\msc$ of $Q$, we show 
a method of reducing $\msc$ to a new generating set of $Q$, 
contained in the $\mbk$-vector space spanned by $\msc$, and
satisfying certain conditions related to $V_r(\sigma,Q)$, from where one deduces the lower bound given by Moh. 
In Theorem~\ref{moh43-2}, we extend sets satisfying certain conditions related to $V_r=V_r(\sigma,Q)$ to minimal generating sets of $Q$.

By means of this second result, we construct, for each characteristic of the field $\mbk$, a set of polynomials that minimally generates the prime ideal $P_3$ of Moh ($n=3$, so $m=2$), for $\lambda=25$. We show that $\mu(P_3)=4$, not only in characteristic $0$, but also in characteristic $p\geq 5$ (see Theorem~\ref{theo:char0}). 
Surprisingly enough, we prove that $\mu(P_3)=2$ in characteristic $2$ and $\mu(P_3)=3$ in characteristic $3$ (see Theorems~\ref{theo:char2} and~\ref{theo:char3}). This
contradicts the aforementioned inequality $\mu(\overline{P}_n)\geq \mu(P_n)$ in the book of Sally (see Remark~\ref{remark-sally2}). And what is more, it leaves as an open problem to find a subfamily of the family of prime ideals of Moh, or an alternative family of prime ideals in $\mbk\lser x,y,z\rser$, with an unbounded minimal number of generators, and where $\mbk$ is a field of characteristic other than 0. We hope to come back to this question in a near future. 

Finally, we show that the minimal generating sets provided for $P_3$ are, in fact, standard basis with the negative degree reverse lexicographic order $>_{\rm ds}$ on the set of monomials 
(see the definition in Convention~\ref{grevlex}).


\section{Notations and preliminaries}\label{section-notations}

This section is devoted to introduce some terminology and some easy preliminary results.

\begin{notation}\label{not-S}
We denote the set of non-negative integers by $\mbn$. Fix $d\in\mbn$, $d\geq 3$.
Let $n_1,\ldots,n_d$ in $\mbn$, $n_i\geq 1$, with $\gcd(n_1,\ldots,n_d)=1$. 
Let $\nums=\langle n_1,\ldots,n_d\rangle$ be the numerical
semigroup generated by $n_1,\ldots,n_d$. For
$r\in\mbn$, let
\begin{eqnarray*}
\fac(r,\nums)=\{\alpha=(\alpha_1,\ldots,\alpha_d)\in\mbn^d\mid
\alpha_1 n_1+\cdots+\alpha_dn_d=r\}
\end{eqnarray*}
stand for the {\em set of factorizations of} $r$. Note that if
$r\not\in\nums$, then $\fac(r,\nums)=\emptyset$. 
\end{notation}

\begin{notation}\label{not-sigma}
Let $\mbk$ be a field, $\monx=x_1,\ldots,x_d$ a set of variables over $\mbk$ and  
$\monset_d=\{\mon^\alpha\mid \alpha\in\mbn^d\}$ the set of monomials $\mon^{\alpha}=x_1^{\alpha_1}\cdots
x_d^{\alpha_d}$ with multi-index exponent $\alpha=(\alpha_1,\ldots,\alpha_d)\in\mbn^d$. The degree of $\mon^{\alpha}$
is defined as $\deg(\mon^\alpha)=|\alpha|=\alpha_1+\cdots+\alpha_d$. 
Let $\mbk\lser\monx\rser=\mbk\lser x_1,\ldots,x_d\rser$ denote the power series ring with coefficients in $\mbk$. 
Any power series can be written as $f=\sum_{\alpha\in\mathbb{N}^d}\lambda_{\alpha}\mon^{\alpha}$, with
$\lambda_{\alpha}\in\mbk$. The {\em support of $f$}, $f\neq 0$, is $\supp(f)=\{\alpha\in\mbn^d\mid\lambda_{\alpha}\neq 0\}$.  The {\em order of $f$}, $f\neq 0$, is 
$\ord(f)=\min\{|\alpha|\in\mbn\mid \alpha\in\supp(f)\}$. The {\em leading form of $f$}, $f\neq 0$, is $\lform(f)=\sum_{|\alpha|={\rm ord}(f)}\lambda_{\alpha}\mon^{\alpha}$; $f$ is said to be {\em homogeneous} if $f=\lform(f)$. If $f,g$ are homogeneous, both of order $r$, then $f+g=0$ or $f+g$ is homogeneous of order $r$.
In this paper, whenever we work with power series rings in three variables or in one variable, 
they will be denoted $x,y,z$ or $t$, respectively. For instance, if $d=3$ and $f=z^3-2xy^3+x^2yz-y^5z^4\in\mbk\lser x,y,z\rser$, then $\supp(f)=\{(0,0,3),(1,3,0),(2,1,1),(0,5,4)\}$, $\ord(f)=3$ and
$\lform(f)=z^3$.

Let $\sigma:\mbk\lser\monx\rser\rightarrow\mbk\lser\monx\rser$ be the $\mbk$-algebra morphism defined by $\sigma(x_i)=x_i^{n_i}$, for
$i=1,\ldots,d$. Given $f=\sum_{\alpha\in\mathbb{N}^d}\lambda_{\alpha}\mon^\alpha
\in \mbk\lser\monx\rser$, $f\neq 0$, the $\sigma$-{\em order} of $f$ is defined as 
$\sord(f):=\ord(\sigma(f))$. Note that $\sord(f)\in\nums$. 
The $\sigma$-{\em leading form} of $f$ is
$f^{\sigma}:=\sigma^{-1}(\lform(\sigma(f)))=
\sum_{\alpha\in{\rm F}(r,\mathcal{S})}\lambda_{\alpha}\mon^{\alpha}$, where $r=\sord(f)$; $f$ is
said to be $\sigma$-{\em homogeneous} if $\sigma(f)$ is homogeneous, that is, if
$f=f^{\sigma}$. Let $f^{\tau}:=f-f^{\sigma}$, which will be called the {\em tail} of
$f$. Following with the previous example, if $\nums=\langle 3,4,5\rangle$, and $\sigma(x)=x^3$, $\sigma(y)=y^4$ and $\sigma(z)=z^5$, then $\sigma(f)=
z^{15}-2x^3y^{12}+x^6y^4z^5-y^{20}z^{20}$, 
$\sord(f)=15\in\nums$, $\lform(\sigma(f))=z^{15}-2x^3y^{12}+x^6y^4z^5$,
$f^\sigma=z^3-2xy^3+x^2yz$ and $f^{\tau}=-y^5z^4$.

Given a subset $\msb$ of $\mbk\lser\monx\rser$, $\msb\neq\emptyset,\{0\}$, let
$\msb^{\sigma}=\{f^{\sigma}\mid f\in\msb, f\neq 0\}$ denote the set of its
$\sigma$-leading forms. If $\msb=\emptyset$ or $\msb=\{0\}$, then we just define
$\msb^{\sigma}=\emptyset$.
\end{notation}    

\begin{remark}\label{remark-Wr}
Let $r\in\mbn$. Set $W_r=\{g\in\mbk\lser\monx\rser\mid g\neq 0,\; g=g^\sigma ,\; \sord(g)=r\}\cup \{0\}$. 
Then, $g\in W_r$ if and only if $g=\sum_{\alpha\in{\rm F}(r,\mathcal{S})}\lambda_{\alpha}\mon^{\alpha}$.
Since $\fac(r,\nums)$ is finite, $W_r=\langle \mon^{\alpha}\mid
\alpha\in\fac(r,\nums)\rangle$ is the $\mbk$-vector space spanned by the 
monomials $\mon^{\alpha}$, with $\alpha\in\fac(r,\nums)$. Thus, the elements of $W_r$ are polynomials. 
If $r\in\nums$, then $W_r\neq \{0\}$ and $\dim W_r\geq 1$.  
If $r=0$, then $W_0=\langle 1\rangle_{\Bbbk}$, the $\mbk$-vector space spanned by $1$.  If $r\not\in\nums$, we just define $W_r=\{0\}$. Observe that $W_r=W_r(\sigma)$ depends on the chosen 
$\sigma$.

Given $h=\sum_{\alpha\in\mathbb{N}^d}\lambda_{\alpha}\mon^\alpha\in
\mbk\lser\monx\rser$ and $r\in\nums$, let $h_{(r)}=\sum_{\alpha\in{\rm F}(r,\mathcal{S})}\lambda_{\alpha}\mon^{\alpha}\in W_r$ be the     
summation of all the monomial terms of $h$ of $\sigma$-order $r$. Since $\fac(r,\nums)$ is finite, then
$h_{(r)}$ is a polynomial and $\sum_{r\in\mathcal{S}}h_{(r)}$ 
is well-defined and coincides with $h$. 
\end{remark}

\begin{lemma}\label{lemma-Vr}
Let $Q$ be a proper ideal of $\mbk\lser\monx\rser$, i.e., $Q\neq\mbk\lser\monx\rser$. 
Let $r\in\nums$. Set 
\begin{eqnarray*}
V_r=\{g\in W_r\mid g\neq 0,\mbox{ there exists }f\in Q, f\neq 0, \mbox{ such that
}g=f^{\sigma}\}\cup\{0\}.
\end{eqnarray*}
Then $V_r$ is a $\mbk$-vector subspace of $W_r$. Moreover, if $g\in W_r$, $g\neq 0$, then
the following are equivalent:
\begin{itemize}
\item[$(a)$] $g\in V_r$;
\item[$(b)$] $g\in Q$ or there exists $h\in\mbk\lser\monx\rser$,
$h\neq 0$, such that $\sord(h)>r$ and such that $f:=g+h\in Q$. 
\end{itemize}
In such a case, $g=f^{\sigma}$ and $h=f^{\tau}$. 
\end{lemma}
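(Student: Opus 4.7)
The plan is to reduce both assertions to the single elementary observation that any nonzero $f\in\mbk\lser\monx\rser$ admits the unique decomposition $f=f^\sigma+f^\tau$, where $f^\sigma\in W_{\sord(f)}$ is $\sigma$-homogeneous of $\sigma$-order $\sord(f)$ and either $f^\tau=0$ or $\sord(f^\tau)>\sord(f)$. Essentially everything follows by unwinding the definitions of $W_r$, $V_r$, $\sord$, $f^\sigma$ and $f^\tau$ set up in the previous notation block.

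For the subspace claim, I would first note that $0\in V_r$ by fiat, and then check closure under addition by taking nonzero $g_1,g_2\in V_r$ with witnesses $f_1,f_2\in Q$ satisfying $f_i^\sigma=g_i$. Writing $f_i=g_i+f_i^\tau$, the sum $f_1+f_2\in Q$ equals $(g_1+g_2)+(f_1^\tau+f_2^\tau)$, where the first summand lies in $W_r$ and the second either vanishes or has $\sigma$-order strictly greater than $r$. Hence, provided $g_1+g_2\neq 0$, one has $(f_1+f_2)^\sigma=g_1+g_2$, so $g_1+g_2\in V_r$; if instead $g_1+g_2=0$, membership is automatic. Closure under scalar multiplication is just the identity $(\lambda f)^\sigma=\lambda f^\sigma$ for $\lambda\in\mbk^\ast$.

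For the equivalence, the direction (a)$\Rightarrow$(b) is immediate: given $g=f^\sigma$ with $f\in Q$, set $h:=f^\tau$; either $h=0$, in which case $g=f\in Q$, or $\sord(h)>r$ and $f=g+h\in Q$. Conversely, for (b)$\Rightarrow$(a), if $g\in Q$ then because $g\in W_r$ is already $\sigma$-homogeneous of $\sigma$-order $r$ we have $g^\sigma=g$, so $g\in V_r$; and if instead there exists $h\neq 0$ with $\sord(h)>r$ and $f:=g+h\in Q$, the uniqueness of the $\sigma$-leading-form/tail decomposition applied to $f$ gives $f^\sigma=g$, whence $g\in V_r$. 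The trailing assertion ``$g=f^\sigma$ and $h=f^\tau$'' in the second subcase is just a restatement of this same uniqueness.

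The only real point of care—more a bookkeeping nuisance than a genuine obstacle—is handling the two possible cancellations: $g_1+g_2$ may vanish in the subspace argument, and $h$ may vanish in (b). In each instance the $\sigma$-leading form identity holds only in the nonzero case, and one must appeal to $0\in V_r$ or to $g\in Q$ directly otherwise. Once this case distinction is made explicit, the rest is a direct translation of the definitions.
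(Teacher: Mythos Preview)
Your proposal is correct and follows essentially the same approach as the paper's proof: both establish the subspace property by summing witnesses and tracking the $\sigma$-leading form of $f_1+f_2$ (with the same case split when $g_1+g_2=0$), and both prove the equivalence by decomposing a witness $f$ as $f^\sigma+f^\tau$. The only cosmetic difference is that the paper writes out the argument explicitly at the level of coefficients $\lambda_\alpha$ indexed by $\fac(r,\nums)$, whereas you work directly with the abstract decomposition $f=f^\sigma+f^\tau$; the logic is identical.
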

\begin{proof}
If $g_1,g_2\in V_r$, with $g_1=-g_2$, then $g_1+g_2=0\in V_r$.
Take $g_1, g_2\in V_r$, with $g_1,g_2\neq 0$ and $g_1\neq -g_2$. Then
$g_1,g_2\in W_r$ and $g_1+g_2\in W_r$, because $W_r$ is a vector space. 
By definition, there are $f_i\in Q$, $f_i\neq 0$, such that $g_i=f_i^{\sigma}$, for $i=1,2$. Then, $f_1+f_2\in Q$ 
and $f_1+f_2\neq 0$, otherwise $g_1=f_1^{\sigma}=-f_2^{\sigma}=-g_2$. In particular, 
$f_1+f_2=f_1^\sigma+f_1^\tau +f_2^\sigma +f_2^\tau=g_1+g_2+f_1^\tau+f_2^\tau$, where $g_1+g_2\in W_r$, $g_1+g_2\neq 0$, and $f_1^\tau+f_2^\tau=0$ or
$\sord(f_1^\tau+f_2^\tau)>r$. It follows that $(f_1+f_2)^\sigma=g_1+g_2$. Therefore $g_1+g_2\in V_r$.
Similarly, if $g\in V_r$, $g\neq 0$, and $\lambda\in\mbk$, $\lambda\neq 0$, 
then $\lambda g\in W_r$, because $W_r$ is a vector space. 
By definition, there exists $f\in Q$, $f\neq 0$, such that $g=f^\sigma$. Then, 
$\lambda f\in Q$, $\lambda f\neq 0$. 
In particular, $\lambda f=\lambda f^\sigma+\lambda f^\tau=\lambda g+\lambda f^{\tau}$, 
where $\lambda g\in W_r$, $\lambda g\neq 0$, and $\lambda f^{\tau}=0$ or 
$\sord(\lambda f^{\tau})>r$. It follows that 
$(\lambda f)^{\sigma}=\lambda g$. Therefore, $\lambda g\in V_r$. 
Thus, $V_r$ is a $\mbk$-vector space. 

Suppose that $g\in W_r$, $g\neq 0$, and that $g\in V_r$. By definition of $V_r$, there exists 
$f\in Q$, $f\neq 0$, and such that $g=f^{\sigma}$. In particular, $\sord(f)=r$.
If $f^{\tau}=0$, then $g=f^{\sigma}=f$, so $g\in Q$. If $f^{\tau}\neq 0$, take $h=f^{\tau}$. 
Then $h\neq 0$, $\sord(h)>r$, and $f=f^{\sigma}+f^{\tau}=g+h$. Conversely, 
let $g\in W_r$, $g\neq 0$, 
and such that $g$ satisfies the condition $(b)$. 
In particular, $g=g^{\sigma}$ and $\sord(g)=r$. If $g\in Q$, take $f=g$, 
so $f\in Q$, $f\neq 0$, $g=g^{\sigma}=f^{\sigma}$ and $g\in V_r$. 
If there exists $h\in\mbk\lser\monx\rser$, $h\neq 0$, such that $\sord(h)>r$ and such that $f=g+h\in Q$, 
then, clearly, $f^{\sigma}=g$ and $g\in V_r$.
\end{proof}

Note that, since $Q$ is proper, then $Q$ has no invertible elements,
there does not exist $f\in Q$, $f\neq 0$, with $\ord(f)=0$ and
$V_0=\{0\}$. If $r\not\in\nums$, we just define $V_r=\{0\}$. 
If $Q=0$, then $V_r=\{0\}$, for every $r\in\mbn$.
Observe that $V_r=V_r(\sigma,Q)$ depends on the choice of $\sigma$ and $Q$. 

\section{Revisiting a theorem of Moh}\label{sectionmoh43}\label{section-revisiting}

A careful reading of the proof of Theorem~4.3 of Moh in \cite{moh1} reveals a 
very deep and rich result (see also the proof of the main result, just quoted as ``Theorem'', in \cite{moh2}). 
In this section we bring to the surface the somehow hidden ideas in the proof of Moh, which lead to Theorems~\ref{moh43-1} and \ref{moh43-2}. 

Recall that, by convention, or just by definition, the empty set is the unique basis of the vector space whose only vector is zero. Let $\card(\msc)$ stand for the cardinality of a set $\msc$ and $\langle \msc\rangle_{\Bbbk}$ for the $\mbk$-vector space spanned by $\msc$. Keeping the terminology as in Notation~\ref{not-S} 
and \ref{not-sigma}, we have:

\begin{theorem}\label{moh43-1} 
Let $Q$ be a proper ideal of $\mbk\lser\monx\rser$ and let
$\msc$ be a finite system of generators of $Q$.
Let $\sigma:\mbk\lser\monx\rser\rightarrow\mbk\lser\monx\rser$ be 
the $\mbk$-algebra morphism defined by $\sigma(x_i)=x_i^{n_i}$, for $i=1,\ldots,d$,
and $V_r=V_r(\sigma,Q)$. Let $\xi=\min(n_1,\ldots,n_d)$,
$s=\min\{\sord(f)\mid f\in Q, f\neq 0\}$, with $\xi, s\geq 1$.
Then, there exist $\msb_0,\ldots,\msb_{\xi-1},\msc_{\xi-1}$ such that, 
for each $i=0,\ldots,\xi-1$, 
\begin{itemize}
\item[$\bullet$] $\msb_0,\ldots,\msb_{\xi-1},\msc_{\xi-1}$ are included in $\langle \msc\rangle_{\Bbbk}$.
\item[$\bullet$] $\msb_i^{\sigma}$ is a (possibly empty) basis of $V_{s+i}$
and, if $g\in\msc_{\xi-1}$, $g\neq 0$, then $\sord(g)\geq s+\xi$.
\item[$\bullet$] $\msb_0,\ldots,\msb_{\xi-1},\msc_{\xi-1}$ are pairwise disjoint.
\item[$\bullet$] $\sum_{i=0}^{\xi-1}\dim V_{s+i}\leq \card(\msb_0\cup\ldots\cup\msb_{\xi-1}\cup\msc_{\xi-1})\leq\card(\msc)$.
\item[$\bullet$] $\msb_0\cup\ldots\cup\msb_{\xi-1}\cup\msc_{\xi-1}$ 
is a generating set of $Q$. 
\end{itemize}
In particular, if $\msc$ is a minimal generating set of $Q$, so it is 
$\msb_0\cup\ldots\cup\msb_{\xi-1}\cup\msc_{\xi-1}$, and
\begin{eqnarray*}
\mu(Q) \geq \sum_{i=0}^{\xi -1} \dim V_{s+i}\, .
\end{eqnarray*}
\end{theorem}

Let us comment some aspects of Theorem~\ref{moh43-1}, especially by comparison 
to Moh's Theorem~4.3 in \cite{moh1}. 
First observe that it holds in any dimension $d\geq 3$ and in any characteristic of the field $\mbk$. 
While Moh focused on the lower bound of $\mu(Q)$, we are interested in 
detailing the construction of a generating set of $Q$ 
from where one can deduce that lower bound. 
The proof consists on substituting the elements of a given system of generators $\msc$ of $Q$ 
by other elements, included in $\langle \msc\rangle_{\Bbbk}$, 
and whose leading forms determine bases of the subspaces $V_{s+i}$. 
In particular, if the elements in $\msc$ are polynomials, 
then so they are the elements in $\langle\msc\rangle_{\Bbbk}$. 
Therefore, in such a case, the elements in $\msb_i$ and $\msc_{\xi-1}$
are polynomials as well. 

So, Theorem~\ref{moh43-1} gives a theoretical method of reducing a generating set $\msc$ 
of the ideal $Q$ to a new one satisfying a series of conditions. 
However, one can not expect the new generating set to be simply $\msc$, or a subset of $\msc$. 
Instead, the elements of the new generating set are $\mbk$-linear combinations of the elements of $\msc$. For instance, let $\sigma(x)=x^3$, $\sigma(y)=y^4$, 
$\sigma(z)=z^5$, $f_1=x^3$, $f_2=x^3+y^4$, $f_3=z^5$, 
$\msc=\{f_1,f_2,f_3\}$ and $Q=(f_1,f_2,f_3)=(x^3,y^4,z^5)$. Then, 
$V_3=W_3=\langle x^3\rangle$, $V_4=W_4=\langle y^4\rangle$ and $V_5=W_5=\langle z^5\rangle$ 
(see Remark~\ref{remark-Wr} and Lemma~\ref{lemma-Vr}). Moreover, $\xi=3$ and $s=3$. 
Thus, one must take $\msb_0=\{x^3\}$, $\msb_1=\{y^4\}$ and $\msb_2=\{z^5\}$, 
because each $\msb_i^{\sigma}$ is the unique basis of $V_{s+i}$, up to a multiplicative non-zero constant. 
However, $\msb_1$ is not contained in $\msc$, but it is in $\langle\msc\rangle_{\Bbbk}$. 

We have not been able to see in the papers of Moh a proof that the ideals $P_n$ are generated by 
polynomials. This is not the case in the paper of Mehta, Saha and Sengupta, in characteristic zero, where they give an iterative method to construct minimal generating sets of $P_n$ whose elements are polynomials (see \cite[Section~4]{mss}). 

Using Theorem~\ref{moh43-1}, and with the terminology as in Notation~\ref{not-S} 
and \ref{not-sigma}, we can deduce:

\begin{theorem}\label{moh43-2}
Let $Q$ be a proper ideal of $\mbk\lser\monx\rser$ and let
$\msc$ be a minimal generating set of $Q$.
Let $\sigma:\mbk\lser\monx\rser\rightarrow\mbk\lser\monx\rser$ be 
the $\mbk$-algebra morphism defined by $\sigma(x_i)=x_i^{n_i}$, for $i=1,\ldots,d$, and
$V_r=V_r(\sigma,Q)$. Let $\xi=\min(n_1,\ldots,n_d)$,
$s=\min\{\sord(f)\mid f\in Q, f\neq 0\}$, with $\xi, s\geq 1$.
Let $\msd_0,\ldots,\msd_{\xi-1}\subset Q$ be such that,
for each $i=0,\ldots,\xi-1$, $\msd_i^{\sigma}$ is a (possibly empty)
linearly independent subset of $V_{s+i}$. 
Then, $\msd_0\cup\ldots\cup\msd_{\xi-1}$ can be extended
to a minimal generating set of $Q$ with elements of $\langle \msc\rangle_{\Bbbk}$.
\end{theorem}

This second result asserts that it is possible to extend
$\msd_0\cup\ldots\cup\msd_{\xi-1}$, where $\msd_i^{\sigma}$ is a linearly independent subset of $V_{s+i}$, to a minimal generating set of $Q$. Again, if the elements of $\msc$ are polynomials, then 
$\msd_0\cup\ldots\cup\msd_{\xi-1}$ can be extended to a minimal generating set of $Q$ by adding polynomials. We will use this second theorem to find, in each characteristic of $\mbk$,  a minimal generating set of the prime ideal $P_3$ of Moh. 

In the end, two are the main ingredients in the proofs of Theorems~\ref{moh43-1} and \ref{moh43-2}: 
the equality $\sord(fg)=\sord(f)+\sord(g)$, where $f,g\in\mbk\lser\monx\rser$, $f,g\neq 0$, 
in the first result, and the Steinitz Exchange Lemma, a ubiquitous procedure in linear algebra, 
in the second result. Summarizing, Theorems~\ref{moh43-1} and \ref{moh43-2} can be thought as two theorems 
on ``reducing to" and ``extending to" generating sets, respectively.

\begin{proof}[Proof of Theorem~\ref{moh43-1}]
Take $\msc=\{f_1,\ldots,f_{\eta}\}$, a finite set
of generators of $Q$ of cardinality $\eta\geq 1$. 
For $r\geq 1$, set $d_r=\dim V_r$. We shall iteratively construct, in each step 
$0\leq i\leq \xi-1$, pairs of sets $\msb_i$, $\msc_i$, so that, in the end, 
$\msb_0,\ldots,\msb_{\xi-1},\msc_{\xi-1}$ satisfy the five required conditions. 

\vspace*{0,2cm}

\noindent \underline {\sc Step $0$: Construction of $\msb_0$ and $\msc_0$}. 

\vspace*{0,2cm}

\noindent By definition of $s$, $V_s\neq \{0\}$. Take
$\{v_{0,1},\ldots,v_{0,d_s}\}\subset Q$ such that
$\{v^\sigma_{0,1},\ldots,v^{\sigma}_{0,d_s}\}$ is a basis of $V_s$.
For each $1\leq j\leq d_{s}$, since $v_{0,j}\in Q$ and $Q=(f_1,\ldots,f_{\eta})$, then
\begin{eqnarray*}
v_{0,j}=\sum_{k=1}^{\eta} a_{0,j}^kf_k,
\end{eqnarray*}
where $a_{0,j}^k\in\mbk\lser\monx\rser$. Note that, if $a_{0,j}^k$ is
not a unit and $a_{0,j}^k\neq 0$, then 
$\sord(a_{0,j}^kf_k)=\sord(a_{0,j}^k)+\sord(f_k)\geq \xi+s>s=\sord(v_{0,j})$.  In
particular, the subset $\Lambda_{0,j}$ of indices $l$ of
$\{1,\ldots,\eta\}$ such that $a_{0,j}^l$ is invertible, with term of
order zero $\lambda_{0,j}^l\in\mbk$, $\lambda_{0,j}^l\neq 0$, and such
that $\sord(f_l)=s$, is not empty. Taking $\sigma$-leading forms in
both parts of the equality, we obtain
\begin{eqnarray*}
v_{0,j}^{\sigma}=\sum_{l\in\Lambda_{0,j}}\lambda_{0,j}^lf_l^{\sigma}.
\end{eqnarray*}
Let $\Lambda_0=\Lambda_{0,1}\cup\ldots\cup\Lambda_{0,d_s}$. Then
\begin{eqnarray*}
V_s=\langle v^\sigma_{0,1},\ldots,v^{\sigma}_{0,d_s}\rangle\subseteq
\langle f_l^{\sigma}\mid l\in\Lambda_0\rangle\subseteq V_s.
\end{eqnarray*}
Thus $V_s=\langle f_l^{\sigma}\mid l\in\Lambda_0\rangle$. Since $V_s$
has dimension $d_s$ and $\{f_l^{\sigma}\mid l\in\Lambda_0\}$ spans
$V_s$, then $\card(\Lambda_0)\geq d_s$, and there exists a subset
$\Gamma\subseteq \Lambda_0\subseteq\{1,\ldots,\eta\}$, $\Gamma$ of
cardinality $d_s$, such that $\{f_l^{\sigma}\mid l\in\Gamma\}$ is a
basis of $V_s$. In particular $d_s\leq\eta$. Renaming the elements
$f_1,\ldots,f_{\eta}$, we can suppose that $\{f_1^{\sigma},\ldots,
f_{d_s}^{\sigma}\}$ is a basis of $V_s$. For $l=1,\ldots,d_s$, set
$f_{0,l}:=f_l$ and let $\msb_0=\{f_{0,1},\ldots,f_{0,d_s}\}$. Note
that $\msb_0\subseteq\msc$, $d_s=\card(\msb_0)\leq \card(\msc)=\eta$ and
$\msb_0^\sigma$ is a basis of $V_s$.

\vspace*{0,2cm}

Once $\msb_0$ has been constructed, let us build $\msc_0$.

\vspace*{0,2cm}

\noindent \underline{Suppose that $\msb_0=\msc$}. 
Let us prove that $V_r=\{0\}$, for all $s<r<s+\xi$. 
Since $\msb_0=\msc$, then $\msb_0^{\sigma}=\msc^{\sigma}=
\{f_1^{\sigma},\ldots ,
f_{\eta}^{\sigma}\}$ is a basis of $V_s$.  If $\xi=1$, there is 
nothing to prove. Suppose that $\xi\geq 2$, and that there exists
$g\in V_r$, $g\neq 0$, for some $s<r<s+\xi$. Then $g=f^\sigma$, with
$f\in Q$, $\sord(f)=r$. Since $f\in Q=(f_1,\ldots,f_\eta)$, then 
$f=\sum_{j=1}^\eta b_jf_j$, where
$b_j\in\mbk\lser\monx\rser$. Let $\Lambda\subseteq \{1,\ldots,\eta\}$
denote the subset of indices $l$ such that $b_{l}$ is invertible, with
term of order zero $\mu_{l}\in\mbk$, $\mu_{l}\neq 0$. Suppose that
$\Lambda\neq\emptyset$. Since $\sord(f)=r>s$, then, necessarily,
$\sum_{l\in\Lambda}\mu_lf_l^{\sigma}=0$. Since $\{f_1^{\sigma},\ldots
, f_{\eta}^{\sigma}\}$ is a basis of $V_s$, this implies $\mu_l=0$, a
contradiction. Therefore, $\Lambda=\emptyset$ and $b_j$ are non-units, for all $j=1,\ldots,\eta$. 
But then $\sord(b_jf_j)\geq \xi+s>r$, whereas $\sord(f)=r$, a
contradiction again. Therefore, $f$ must be zero and $V_{s+1}=\ldots=V_{s+\xi-1}=\{0\}$.

So, when $\msb_0=\msc$, take $\msb_{1}=\ldots=\msb_{\xi-1}=\emptyset$ and 
$\msc_0=\msc_1=\ldots=\msc_{\xi-1}=\emptyset$. Then, 
$\msb_0,\ldots,\msb_{\xi-1},\msc_{\xi-1}$ 
satisfy the five required properties, namely, 
$\msb_0,\ldots,\msb_{\xi-1},\msc_{\xi-1}$ are included in $\langle \msc\rangle_{\Bbbk}$,
$\msb_i^{\sigma}$ is a (possibly empty) basis of $V_{s+i}$,
$\msb_0,\ldots,\msb_{\xi-1},\msc_{\xi-1}$ are pairwise disjoint sets,
$\sum_{i=0}^{\xi-1}\dim V_{s+i}=\card(\msb_0\cup\ldots\cup\msb_{\xi-1}\cup\msc_{\xi-1})=\card(\msc)$
and $\msb_0\cup\ldots\cup\msb_{\xi-1}\cup\msc_{\xi-1}$ is a generating set of $Q$. 

\vspace*{0,2cm} 

\noindent \underline{Suppose that $\msb_0\subsetneq\msc$}.
For any $g\in\msc\setminus\msb_0$, with
$\sord(g)=s$, if any, then
$g^{\sigma}=\sum_{l=1}^{d_s}\lambda_lf_{0,l}^{\sigma}$, for some
$\lambda_l\in\mbk$. Moreover, $g-\sum_{l=1}^{d_s}\lambda_lf_{0,l}\in \langle\msc\rangle_{\Bbbk}$
and $\sord(g-\sum_{l=1}^{d_s}\lambda_lf_{0,l})>s$. Let
\begin{multline*}
\msc_{0,1}:=\left\{g\in\msc\setminus\msb_0\mid\sord(g)>s\right\}\subseteq\msc\setminus\msb_0
\mbox{ and }\\
\msc_{0,2}:=\left\{g-\sum_{l=1}^{d_s}\lambda_lf_{0,l}\bigg|\;
g\in\msc\setminus\msb_0\;, \sord(g)=s\;, 
g^{\sigma}=\sum_{l=1}^{d_s}\lambda_lf_{0,l}^{\sigma}\right\}\subset \langle\msc\rangle_{\Bbbk}.
\end{multline*}
Take $\msc_0=\msc_{0,1}\cup\msc_{0,2}$. Since $\msb_0\subsetneq\msc$, then $\msc_0\neq\emptyset$. Indeed, 
there exists $g\in\msc\setminus\msb_0\subset Q$, so $\sord(g)\geq s$; if $\sord(g)>s$, then $g\in\msc_{0,1}\subseteq\msc_0$; if $\sord(g)=s$, then 
$g-\sum_{l=1}^{d_s}\lambda_lf_{0,l}\in\msc_{0,2}\subseteq\msc_0$. 

Since the elements of $\msc_0$ have $\sigma$-order bigger than $s$, then
$\msb_0$ and $\msc_0$ are disjoint sets, so 
$\card(\msb_0\cup\msc_0)=\card(\msb_0)+\card(\msc_0)$. On the other hand, 
$\msc_{0,1}$ and $\msc_{0,2}$ are not necessarily disjoint sets, so
\begin{multline*}
\card(\msc_0)=\card(\msc_{0,1}\cup\msc_{0,2})\leq\card(\msc_{0,1})+\card(\msc_{0,2})\leq \\
\card(\{g\in\msc\setminus\msb_0\mid\sord(g)>s\})+
\card(\{g\in\msc\setminus\msb_0\mid\sord(g)=s\})=
\card(\msc\setminus\msb_0).
\end{multline*}
Therefore,
\begin{eqnarray*}
d_s=\card(\msb_0)<\card(\msb_0)+\card(\msc_0)=\card(\msb_0\cup\msc_0)\leq\card(\msc).
\end{eqnarray*}

Since, $\msb_0,\msc_{0,1},\msc_{0,2}\subset \langle\msc\rangle_{\Bbbk}$, then the ideal generated by
$\msb_0\cup\msc_0$ is contained in the ideal generated by $\msc$, which is $Q$. Thus, 
$(\msb_0\cup\msc_0)\subseteq (\msc)=Q$. Let us prove that 
$Q=(\msc)\subseteq (\msb_0\cup\msc_0)$. Take $g\in\msc$. If $g\in\msb_0$ or 
$g\in\msc\setminus\msb_0$ and $\sord(g)>s$, then, clearly, 
$g\in\msb_0\cup\msc_{0,1}\subset\msb_0\cup\msc_0$.
If $g\in\msc\setminus\msb_0$ and $\sord(g)=s$, 
then $g=\sum_{l=1}^{d_s}\lambda_lf_{0,l}+(g-\sum_{l=1}^{d_s}\lambda_lf_{0,l})$ and $g\in\langle\msb_0\cup\msc_{0,2}\rangle_{\Bbbk}\subset (\msb_0\cup\msc_0)$.
Therefore, $\msc\subset (\msb_0\cup\msc_0)$, 
$Q=(\msc)\subseteq (\msb_0\cup\msc_0)$ and 
$\msb_0\cup\msc_0$ is a generating set of $Q$. 

So, when $\msb_0\subsetneq\msc$, then we obtain two sets 
$\msb_0=\{f_{0,1},\ldots,f_{0,d_s}\}$ and $\msc_0=\{g_{0,1},\ldots,g_{0,e_s}\}$, say, with
$\card(\msb_0)=d_s>0$ and $\card(\msc_0)=e_s>0$, such that: $\msb_0$, $\msc_0$ are
included in $\langle\msc\rangle_{\Bbbk}$; $\msb_0^{\sigma}$ is a basis of $V_s$ and 
$\sord(g_{0,l})>s$; 
$\msb_0$ and $\msc_0$ are disjoint; $d_s<\card(\msb_0\cup\msc_0)\leq\card(\msc)$
and $\msb_0\cup\msc_0$ is a generating set of $Q$.

\vspace*{0,2cm} 

If $\xi=1$, we are done. Suppose that $1\leq \xi-1$, so in particular, $\xi\geq 2$.

\vspace*{0,2cm} 

\noindent \underline{\sc Step $1$: Construction of $\msb_1$ and $\msc_1$}. 

\vspace*{0,2cm}

\noindent By {\sc Step 0}, we can suppose that 
$\msb_0\subsetneq\msc$ and $\msc_0\neq\emptyset$, otherwise we take $\msb_{1}=\ldots=\msb_{\xi-1}=\emptyset$ 
and $\msc_0=\msc_1=\ldots=\msc_{\xi-1}=\emptyset$. 

\vspace*{0,2cm}

\noindent \underline{Suppose that $V_{s+1}=\{0\}$}. 
Then, take $\msb_1=\emptyset$ and $\msc_1=\msc_{0}$, so
$\msb_0\cup\msb_{1}\cup\msc_{1}=\msb_0\cup\msc_{0}$, 
which is a set of generators of $Q$ and 
$d_s+d_{s+1}=d_s\leq \card(\msb_0\cup\msb_1\cup\msc_1)\leq \card(\msc)$.

\vspace*{0,2cm}

\noindent \underline{Suppose that $V_{s+1}\neq \{0\}$}.
Take $\{v_{1,1},\ldots,v_{1,d_{s+1}}\}\subset Q$ such that
$\{v^\sigma_{1,1},\ldots,v^{\sigma}_{1,d_{s+1}}\}$ is a basis of
$V_{s+1}$.  Since $v_{1,j}\in Q$ and $Q$ is generated by the elements
of $\msb_0\cup\msc_{0}$, then, for each $1\leq j\leq d_{s+1}$,
\begin{eqnarray}\label{eq-cas1}
v_{1,j}=\sum_{l=1}^{d_{s}}a^{0,l}_{1,j}f_{0,l}
+\sum_{l=1}^{e_s}b^{l}_{1,j}g_{0,l},
\end{eqnarray}
where $a^{0,l}_{1,j},b^l_{1,j}\in\mbk\lser\monx\rser$,
$f_{0,l}\in\msb_0$ and $g_{0,l}\in\msc_{0}$. Note that
$\sord(v_{1,j})=s+1$ and $\sord(f_{0,l})=s$.
In addition, $\sord(g_{0,l})\geq s+1$.
If $a^{0,l}_{1,j}$ is not a unit and $a^{0,l}_{1,j}\neq 0$, then
$\sord(a^{0,l}_{1,j}f_{0,l})\geq\xi+s>s+1=\sord(v_{1,j})$, because $\xi\geq 2$. Let
$\Lambda^0_{1,j}\subseteq \{1,\ldots,d_{s}\}$ denote the subset of
indices $l$ such that $a^{0,l}_{1,j}$ is invertible, with term of
order zero $\lambda^{0,l}_{1,j}\in\mbk$, $\lambda^{0,l}_{1,j}\neq 0$,
and suppose that $\Lambda^0_{1,j}\neq\emptyset$. Taking
$\sigma$-leading forms in Equality~\eqref{eq-cas1}, we deduce that
$\sum_{l\in\Lambda^0_{1,j}}\lambda^{0,l}_{1,j}f_{0,l}^{\sigma}=0$. Since
$\{f^{\sigma}_{0,1},\ldots,f^{\sigma}_{0,d_{s}}\}$ is a basis of
$V_s$, then $\lambda^{0,l}_{1,j}=0$, a contradiction. Therefore, all
$a^{0,l}_{1,j}$ are non-units. Hence,
$\sord(a^{0,l}_{1,j}f_{0,l})>s+1=\sord(v_{1,j})$, for all
$l=1,\ldots,d_{s}$. Let
$\Lambda_{1,j}\subseteq \{1,\ldots,e_s\}$ be the set of
indices $l$ such that $b^l_{1,j}$ is invertible, with term of order
zero $\mu^{l}_{1,j}\in\mbk$, $\mu^l_{1,j}\neq 0$, and such that
$\sord(g_{0,l})=s+1$. Note that $\Lambda_{1,j}\neq\emptyset$ for, if
$b_{1,j}^l$ is not a unit, then $\sord(b_{1,j}^lg_{0,l})\geq
\xi+s+1> \sord(v_{1,j})$. Taking $\sigma$-leading forms in
Equality~\eqref{eq-cas1}, we deduce that
\begin{eqnarray*}
v_{1,j}^{\sigma}=\sum_{l\in\Lambda_{1,j}}\mu^l_{1,j}g_{0,l}^{\sigma}.
\end{eqnarray*}
Let $\Lambda_1=\Lambda_{1,1}\cup\ldots\cup\Lambda_{1,d_{s+1}}\subseteq\{1,\ldots,e_s\}$. Then
\begin{eqnarray*}
V_{s+1}=\langle
v_{1,1}^{\sigma},\ldots,v_{1,d_{s+1}}^{\sigma}\rangle\subseteq \langle
g_{0,l}^{\sigma}\mid l\in\Lambda_1\rangle\subseteq V_{s+1}.
\end{eqnarray*}
So, $V_{s+1}=\langle g_{0,l}^{\sigma}\mid l\in\Lambda_1\rangle$.
Since $\dim V_{s+1}=d_{s+1}$ and $\{g_{0,l}^{\sigma}\mid l\in\Lambda_1\}$
spans $V_{s+1}$, then $\card(\Lambda_1)\geq d_{s+1}$, and there exists
a subset $\Gamma\subseteq\Lambda_1\subseteq\{1,\ldots,e_s\}$,
$\card(\Gamma)=d_{s+1}$, such that $\{g_{0,l}^{\sigma}\mid
l\in\Gamma\}$ is a basis of $V_{s+1}$. In particular,
$d_{s+1}=\card(\Gamma)\leq \card(\Lambda_1)\leq e_s$.
Renaming the elements of
$\msc_{0}=\{g_{0,1},\ldots,g_{0,e_s}\}$, we can suppose
that $\{g^{\sigma}_{0,1},\ldots,g^{\sigma}_{0,d_{s+1}}\}$ is a
basis of $V_{s+1}$. For $l=1,\ldots,d_{s+1}$, set $f_{1,l}:=g_{0,l}$
and let $\msb_1=\{f_{1,1},\ldots,f_{1,d_{s+1}}\}$. Note that
$\msb_1\subseteq\msc_{0}\subset \langle\msc\rangle_{\Bbbk}$ and $\card(\msb_1)=d_{s+1}$.

\vspace*{0,2cm}

Once $\msb_1$ has been constructed, let us build $\msc_1$.

\vspace*{0,2cm}

\noindent \underline{Suppose that $\msb_1=\msc_0$}. 
Then, $\msb_0\cup\msb_1=\msb_0\cup\msc_{0}$, which is a system of
generators of $Q$ by the former step.
Let us show that $V_r=\{0\}$, for all
$s+1<r<s+\xi$. If $\xi=2$, there is nothing to prove. Suppose that $\xi\geq 3$,
and that there exists $g\in V_r$, $g\neq 0$, for some
$s+1<r<s+\xi$. Then $g=f^\sigma$, with $f\in Q$, $\sord(f)=r$. Since
$f\in Q=(\msb_0\cup\msb_1)$, then
\begin{eqnarray*}
f=\sum_{l=1}^{d_{s}}a_{0,l}f_{0,l}+\sum_{l=1}^{d_{s+1}}a_{1,l}f_{1,l},
\end{eqnarray*}
where $a_{k,l}\in\mbk\lser\monx\rser$. Let $\Lambda_0\subseteq
\{1,\ldots,d_s\}$ denote the subset of indices $l$ such that $a_{0,l}$
is invertible, with term of order zero $\lambda_{0,l}\in\mbk$,
$\lambda_{0,l}\neq 0$. Suppose that $\Lambda_0\neq\emptyset$. Since
$\sord(f_{0,l})=s<s+1<r=\sord(f)$, then, necessarily,
$\sum_{l\in\Lambda_0}\lambda_{0,l}f_{0,l}^{\sigma}=0$. Since
$\{f_{0,1}^{\sigma},\ldots,f_{0,d_{s}}^{\sigma}\}$ is a basis of
$V_s$, this implies $\lambda_{0,l}=0$, a contradiction. Therefore,
$\Lambda_0=\emptyset$, all $a_{0,l}$ are non-units and
$\sord(a_{0,l}f_{0,l})\geq \xi+s\geq s+3$. 
Let $\Lambda_1\subseteq
\{1,\ldots,d_{s+1}\}$ denote the subset of indices $l$ such that $a_{1,l}$
is invertible, with term of order zero $\lambda_{1,l}\in\mbk$,
$\lambda_{1,l}\neq 0$. Suppose that $\Lambda_1\neq\emptyset$. Since
$\sord(f_{1,l})=s+1<r=\sord(f)$ and
$\sord(f_{1,l})=s+1<s+3\leq s+\xi\leq\sord(a_{0,l'}f_{0,l'})$, then, necessarily,
$\sum_{l\in\Lambda_1}\lambda_{1,l}f_{1,l}^{\sigma}=0$. Since
$\{f_{1,1}^{\sigma},\ldots,f_{1,d_{s+1}}^{\sigma}\}$ is a basis of
$V_{s+1}$, this implies $\lambda_{1,l}=0$, a contradiction. Therefore
$\Lambda_1=\emptyset$ and all $a_{1,l}$ are non-units. But, then,
$\sord(a_{1,l}f_{1,l}),\sord(a_{0,l'}f_{0,l'})\geq \xi+s>r$, whereas $\sord(f)=r$, a
contradiction again. Therefore, $f$ must be zero and 
$V_{s+2}=\ldots=V_{s+\xi-1}=\{0\}$.

So, when $\msb_1=\msc_0$, take $\msb_{2}=\ldots=\msb_{\xi-1}=\emptyset$ and 
$\msc_1=\ldots=\msc_{\xi-1}=\emptyset$. Then, $\msb_0,\ldots,\msb_{\xi-1},\msc_{\xi-1}$ 
satisfy the required properties:
$\msb_0,\ldots,\msb_{\xi-1},\msc_{\xi-1}\subset\langle \msc\rangle_{\Bbbk}$;
$\msb_i^{\sigma}$ is a (possibly empty) basis of $V_{s+i}$;
$\msb_0,\ldots,\msb_{\xi-1},\msc_{\xi-1}$ are pairwise disjoint;
\begin{eqnarray*}
\sum_{k=0}^{\xi-1}\dim V_{s+k}=\card(\msb_0\cup\ldots\cup\msb_{\xi-1}\cup\msc_{\xi-1})=
\card(\msb_0\cup\msc_0)\leq\card(\msc)
\end{eqnarray*}
and $\msb_0\cup\ldots\cup\msb_{\xi-1}\cup\msc_{\xi-1}$ is a generating set of $Q$. 

\vspace*{0,2cm}

\noindent \underline{Suppose that $\msb_1\subsetneq\msc_0$}.
For any $g\in\msc_0\setminus\msb_1$, with $\sord(g)=s+1$, if any, then
$g^{\sigma}=\sum_{l=1}^{d_{s+1}}\lambda_lf_{1,l}^{\sigma}$, for some
$\lambda_l\in\mbk$. Moreover, $g-\sum_{l=1}^{d_{s+1}}\lambda_lf_{1,l}\in Q$
and $\sord(g-\sum_{l=1}^{d_{s+1}}\lambda_lf_{1,l})>s+1$. Let
\begin{multline*}
\msc_{1,1}:=\left\{g\in\msc_0\setminus\msb_1\mid\sord(g)>s+1\right\}\subseteq\msc_0\setminus\msb_1
\mbox{ and }\\
\msc_{1,2}:=\left\{g-\sum_{l=1}^{d_{s+1}}\lambda_lf_{1,l}\bigg|\;
g\in\msc_0\setminus\msb_1\;, \sord(g)=s+1\;, 
g^{\sigma}=\sum_{l=1}^{d_{s+1}}\lambda_lf_{1,l}^{\sigma}\right\}\subset
\langle\msc\rangle_{\Bbbk}.
\end{multline*}
Take $\msc_1=\msc_{1,1}\cup\msc_{1,2}$. Let us prove that $\msc_1\neq\emptyset$. 
Indeed, since $\msb_1\subsetneq\msc_0$, then
there exists $g\in\msc_0\setminus\msb_1$, so $\sord(g)\geq s+1$; if $\sord(g)>s+1$, then $g\in\msc_{1,1}\subseteq\msc_1$; if $\sord(g)=s+1$, then 
$g-\sum_{l=1}^{d_{s+1}}\lambda_lf_{1,l}\in\msc_{1,2}\subseteq\msc_1$. In any case, $\msc_1\neq\emptyset$.

Note that the elements of $\msc_1$ have $\sigma$-order bigger than $s+1$, so
$\msb_0$, $\msb_1$ and $\msc_1$ are pairwise disjoint and 
$\card(\msb_0\cup\msb_1\cup\msc_1)=\card(\msb_0)+\card(\msb_1)+\card(\msc_1)$. As before, 
$\msc_{1,1}$ and $\msc_{1,2}$ are not necessarily disjoint sets, so
\begin{multline*}
\card(\msc_1)=\card(\msc_{1,1}\cup\msc_{1,2})\leq\card(\msc_{1,1})+\card(\msc_{1,2})\leq\\ 
\card(\{g\in\msc_0\setminus\msb_1\mid\sord(g)>s+1\})+
\card(\{g\in\msc_0\setminus\msb_1\mid\sord(g)=s+1\})=\\
\card(\msc_0\setminus\msb_1).
\end{multline*}
Therefore,
\begin{multline*}
d_s+d_{s+1}=\card(\msb_0)+\card(\msb_1)<
\card(\msb_0)+\card(\msb_1)+\card(\msc_1)=\\
\card(\msb_0\cup\msb_1\cup\msc_1)\leq
\card(\msb_0)+\card(\msb_1)+\card(\msc_0\setminus\msb_1)=
\card(\msb_0\cup\msc_0)\leq\card(\msc).
\end{multline*}

Since $\msb_0,\msb_1,\msc_{1,1},\msc_{1,2}\subset\langle\msc\rangle_{\Bbbk}$, 
then the ideal generated by $\msb_0\cup\msb_1\cup\msc_1$ is contained in the ideal generated by $\msc$, 
so $(\msb_0\cup\msb_1\cup\msc_1)\subseteq (\msc)=Q$. 
Let us prove that $Q\subseteq (\msb_0\cup\msb_1\cup\msc_1)$.
By {\sc Step $0$}, $Q=(\msb_0\cup\msc_0)$. 
Let $g\in\msb_0\cup\msc_0$. If $g\in\msb_0\cup\msb_1$ or
$g\in\msc_0\setminus\msb_1$ and $\sord(g)>s+1$, then, clearly, 
$g\in\msb_0\cup\msb_1\cup\msc_{1,1}\subset \msb_0\cup\msb_1\cup\msc_{1}$.
If $g\in\msc_0\setminus\msb_1$ and $\sord(g)=s+1$, then
$g=\sum_{l=1}^{d_{s+1}}\lambda_lf_{1,l}+(g-\sum_{l=1}^{d_{s+1}}\lambda_lf_{1,l})$
and $g\in\langle\msb_1\cup\msc_{1,2}\rangle_{\Bbbk}\subset (\msb_1\cup\msc_1)$. Therefore, 
$\msb_0\cup\msc_0\subset(\msb_0\cup\msb_1\cup\msc_1)$, $Q=(\msb_0\cup\msc_0)\subseteq
(\msb_0\cup\msb_1\cup\msc_1)$ and $\msb_0\cup\msb_1\cup\msc_1$ is 
a generating set of $Q$.

Summarizing, when $\msb_1\subsetneq\msc_0$, we obtain two disjoint sets 
$\msb_1=\{f_{1,1},\ldots,f_{1,d_{s+1}}\}$ and $\msc_1=\{g_{1,1},\ldots,g_{1,e_{s+1}}\}$, say, with
$\card(\msb_1)=d_{s+1}>0$ and $\card(\msc_1)=e_{s+1}>0$; $\msb_1$ and $\msc_1$ 
included in $\langle\msc\rangle_{\Bbbk}$; with $\msb_1^{\sigma}$ a basis of $V_{s+1}$ and
$\sord(g_{1,l})>s+1$;
$\msb_0$, $\msb_1$, $\msc_1$ pairwise disjoint
and such that $\msb_0\cup\msb_1\cup\msc_1$ is a generating set of $Q$ with
$d_s+d_{s+1}<\card(\msb_0\cup\msb_1\cup\msc_1)\leq\card(\msc)$. 

\vspace*{0,2cm} 

If $\xi=2$, we are done. Suppose that $2\leq \xi-1$, so in particular, $\xi\geq 3$.

\vspace*{0,2cm} 

\noindent \underline{\sc Step $i$, $2\leq i\leq \xi-1$: Construction of $\msb_i$ and $\msc_i$}.

\vspace*{0,2cm}

\noindent In particular, $\xi\geq i+1$. Suppose that we have built pairwise disjoint sets 
\begin{eqnarray*}
&&\msb_0,\msc_0\subset\langle\msc\rangle_{\Bbbk},\phantom{+}
\msb_0,\subsetneq\msc\;\mbox{ and }\;
\msb_0,\msc_0\neq\emptyset,\phantom{+}
\msb_0^{\sigma}\mbox{ a (non-empty) basis of }V_{s};\\
&&\msb_1,\msc_1\subset\langle\msc\rangle_{\Bbbk},\phantom{+}
\msb_1\subsetneq\msc_0\;\mbox{ and }\;\msc_1\neq\emptyset,\phantom{+}
\msb_1^{\sigma}\mbox{ a (possibly empty) basis of }V_{s+1};
\phantom{+}\ldots\phantom{+}\;\\
&&\msb_{i-1},\msc_{i-1}\subset\langle\msc\rangle_{\Bbbk},\phantom{+}
\msb_{i-1}\subsetneq\msc_{i-2}\;\mbox{ and }\;\msc_{i-1}\neq\emptyset,
\phantom{+}\msb_{i-1}^{\sigma}\mbox{ a (possibly empty) basis of }V_{s+i-1}.
\end{eqnarray*}
We suppose that $\msb_{i-1}\subsetneq\msc_{i-2}$ and $\msc_{i-1}\neq\emptyset$, 
because, otherwise, one takes
$\msb_i=\ldots=\msb_{\xi-1}=\emptyset$ and $\msc_{i-1}=\ldots=\msc_{\xi-1}=\emptyset$. Moreover, 
\begin{eqnarray*}
&&\msb_0=\{f_{0,1},\ldots,f_{0,d_s}\}\;\mbox{ and }\;
\msc_0=\{g_{0,1},\ldots,g_{0,e_s}\};\\
&&\mbox{If }\msb_1\neq\emptyset,\;\msb_1=\{f_{1,1},\ldots,f_{1,d_{s+1}}\}\;\mbox{ and }\;
\msc_1=\{g_{1,1},\ldots,g_{1,e_{s+1}}\};
\phantom{+}\ldots\phantom{+}\\
&&\mbox{If }\msb_{i-1}\neq\emptyset,\;\msb_{i-1}=\{f_{i-1,1},\ldots,
f_{i-1,d_{s+i-1}}\}\;\mbox{ and }
\msc_{i-1}=\{g_{i-1,1},\ldots,g_{i-1,e_{s+i-1}}\},
\end{eqnarray*}
where $\card(\msb_{k})=d_{s+k}=\dim V_{s+k}$, 
$\card(\msc_{k})=e_{s+k}>0$
and $\sord(g_{k,j})>s+k$, for all $k$ with $0\leq k\leq i-1$. 
Furthermore, $\msb_0,\ldots,\msb_{i-1},\msc_{i-1}$ are pairwise disjoint and
$\msb_0\cup\ldots\cup\msb_{i-1}\cup\msc_{i-1}$ is a generating set of $Q$ with
\begin{eqnarray*}
d_{s}+\cdots+d_{s+i-1}<\card(\msb_0\cup\ldots\cup\msb_{i-1}\cup\msc_{i-1})\leq\card(\msc).    
\end{eqnarray*}

\vspace*{0,2cm}

\noindent \underline{Suppose that $V_{s+i}=\{0\}$}. 
Then, take $\msb_i=\emptyset$ and $\msc_i=\msc_{i-1}$, so
$\msb_0,\ldots,\msb_{i},\msc_{i}$ are pairwise disjoint and
\begin{eqnarray*}
\msb_0\cup\ldots\cup\msb_{i-1}\cup\msb_{i}\cup\msc_{i}=\msb_0\cup\ldots \cup\msb_{i-1}\cup\msc_{i-1}, 
\end{eqnarray*}
which is a set of generators of $Q$. Moreover,
\begin{eqnarray*}
\sum_{k=0}^{i}d_{s+k}=\sum_{k=0}^{i-1}d_{s+k}<\card(\msb_0\cup\ldots\cup\msb_i\cup\msc_i)
\leq \card(\msc).
\end{eqnarray*}

\vspace*{0,2cm}

\noindent \underline{Suppose that $V_{s+i}\neq \{0\}$}.
Take $\{v_{i,1},\ldots,v_{i,d_{s+i}}\}\subset Q$ such that
$\{v^\sigma_{i,1},\ldots,v^{\sigma}_{i,d_{s+i}}\}$ is a basis of
$V_{s+i}$.  Since $v_{i,j}\in Q$ and 
$\msb_0\cup\ldots\cup\msb_{i-1}\cup\msc_{i-1}$ is a generating set of $Q$, 
then, for each $1\leq j\leq d_{s+i}$,
\begin{eqnarray}\label{eq-casi}
v_{i,j}=\sum_{k=0}^{i-1}\sum_{l=1}^{d_{s+k}}a^{k,l}_{i,j}f_{k,l}
+\sum_{l=1}^{e_{s+i-1}}b^{l}_{i,j}g_{i-1,l},
\end{eqnarray}
where $a^{k,l}_{i,j},b^l_{i,j}\in\mbk\lser\monx\rser$,
$f_{k,l}\in\msb_k$ and $g_{i-1,l}\in\msc_{i-1}$. Note that
$\sord(v_{i,j})=s+i$ and $\sord(f_{k,l})=s+k$, where $0\leq k\leq i-1$
and $1\leq l\leq d_{s+k}$. In addition, $\sord(g_{i-1,l})\geq s+i$,
for $1\leq l\leq\eta-e_{i-1}$. 
If $a^{k,l}_{i,j}$ is not a unit and $a^{k,l}_{i,j}\neq 0$, then
$\sord(a^{k,l}_{i,j}f_{k,l}) \geq\xi+s+k>s+i=\sord(v_{i,j})$, because $\xi\geq i+1$. Let
$\Lambda^0_{i,j}\subseteq \{1,\ldots,d_{s}\}$ denote the subset of
indices $l$ such that $a^{0,l}_{i,j}$ is invertible, with term of
order zero $\lambda^{0,l}_{i,j}\in\mbk$, $\lambda^{0,l}_{i,j}\neq 0$,
and suppose that $\Lambda^0_{i,j}\neq\emptyset$. Taking
$\sigma$-leading forms in Equality~\eqref{eq-casi}, we deduce that
$\sum_{l\in\Lambda^0_{i,j}}\lambda^{0,l}_{i,j}f_{0,l}^{\sigma}=0$. Since
$\{f^{\sigma}_{0,1},\ldots,f^{\sigma}_{0,d_{s}}\}$ is a basis of
$V_s$, then $\lambda^{0,l}_{i,j}=0$, a contradiction. Therefore, all
the $a^{0,l}_{i,j}$ are non-units. For all $k=1,\ldots,i-1$,
$\sord(f_{k,l})=s+k<s+i=\sord(v_{i,j})$. Thus, similarly, and
recursively on $k=1,\ldots,i-1$, one can prove that $a^{k,l}_{i,j}$
are not invertible. Hence,
$\sord(a^{k,l}_{i,j}f_{k,l})>s+i=\sord(v_{i,j})$, for all
$k=0,\ldots,i-1$ and for all $l=1,\ldots,d_{s+k}$.  Let
$\Lambda_{i,j}\subseteq \{1,\ldots,e_{s+i-1}\}$ be the set of
indices $l$ such that $b^l_{i,j}$ is invertible, with term of order
zero $\mu^{l}_{i,j}\in\mbk$, $\mu^l_{i,j}\neq 0$, and such that
$\sord(g_{i-1,l})=s+i$. Note that $\Lambda_{i,j}\neq\emptyset$ for, if
$b_{i,j}^l$ is not a unit, then $\sord(b_{i,j}^lg_{i-1,l})\geq
\xi+s+i>s+i=\sord(v_{i,j})$. Taking $\sigma$-leading forms in
Equality~\eqref{eq-casi}, we deduce that
\begin{eqnarray*}
v_{i,j}^{\sigma}=\sum_{l\in\Lambda_{i,j}}\mu^l_{i,j}g_{i-1,l}^{\sigma}.
\end{eqnarray*}
Let $\Lambda_i=\Lambda_{i,1}\cup\ldots\cup\Lambda_{i,d_{s+i}}$. Then
\begin{eqnarray*}
V_{s+i}=\langle
v_{i,1}^{\sigma},\ldots,v_{i,d_{s+i}}^{\sigma}\rangle\subseteq \langle
g_{i-1,l}^{\sigma}\mid l\in\Lambda_i\rangle\subseteq V_{s+i}.
\end{eqnarray*}
So $V_{s+i}=\langle g_{i-1,l}^{\sigma}\mid l\in\Lambda_i\rangle$.
Since $\dim V_{s+i}=d_{s+i}$ and $\{g_{i-1,l}^{\sigma}\mid l\in\Lambda_i\}$
spans $V_{s+i}$, then $\card(\Lambda_i)\geq d_{s+i}$, and there exists
a subset $\Gamma\subseteq\Lambda_i\subseteq\{1,\ldots,e_{s+i-1}\}$,
$\card(\Gamma)=d_{s+i}$, such that $\{g_{i-1,l}^{\sigma}\mid
l\in\Gamma\}$ is a basis of $V_{s+i}$. In particular,
$d_{s+i}=\card(\Gamma)\leq\card(\Lambda_i)\leq e_{s+i-1}$. 
Renaming the elements of
$\msc_{i-1}=\{g_{i-1,1},\ldots,g_{i-1,e_{s+i-1}}\}$, we can suppose
that $\{g^{\sigma}_{i-1,1},\ldots,g^{\sigma}_{i-1,d_{s+i}}\}$ is a
basis of $V_{s+i}$. For $l=1,\ldots,d_{s+i}$, set $f_{i,l}:=g_{i-1,l}$
and let $\msb_i=\{f_{i,1},\ldots,f_{i,d_{s+i}}\}$. Note that
$\msb_i\subseteq\msc_{i-1}\subset \langle\msc\rangle_{\Bbbk}$. 

\vspace*{0,2cm}

Once $\msb_i$ has been constructed, let us build $\msc_i$.

\vspace*{0,2cm}

\noindent \underline{Suppose that $\msb_i=\msc_{i-1}$}. Then,
$\msb_0\cup\ldots\msb_{i-1}\cup\msb_i=
\msb_0\cup\ldots\msb_{i-1}\cup\msc_{i-1}$, which, by 
{\sc Step $i-1$}, is a generating set of
$Q$. Let us show that $V_r=\{0\}$, for all
$s+i<r<s+\xi$. If $\xi=i+1$, there is nothing to prove. Suppose that $\xi\geq i+2$,
and that there exists $g\in V_r$, $g\neq 0$, for some
$s+i<r<s+\xi$. Then $g=f^\sigma$, with $f\in Q$, $\sord(f)=r$. Since
$f\in Q=(\msb_0\cup\ldots\cup\msb_i)$, then
\begin{eqnarray*}
f=\sum_{k=0}^{i}\sum_{l=1}^{d_{s+k}}a_{k,l}f_{k,l},
\end{eqnarray*}
where $a_{k,l}\in\mbk\lser\monx\rser$. Let $\Lambda_0\subseteq
\{1,\ldots,d_s\}$ denote the subset of indices $l$ such that $a_{0,l}$
is invertible, with term of order zero $\lambda_{0,l}\in\mbk$,
$\lambda_{0,l}\neq 0$. Suppose that $\Lambda_0\neq\emptyset$. Since
$\sord(f_{0,l})=s<s+i<r=\sord(f)$, then, necessarily,
$\sum_{l\in\Lambda_0}\lambda_{0,l}f_{0,l}^{\sigma}=0$. Since
$\{f_{0,1}^{\sigma},\ldots,f_{0,d_{s}}^{\sigma}\}$ is a basis of
$V_s$, this implies $\lambda_{0,l}=0$, a contradiction. Therefore,
$\Lambda_0=\emptyset$, all $a_{0,l}$ are non-units
and $\sord(a_{0,l}f_{0,l})\geq \xi+s\geq s+i+2$. 
Let $\Lambda_1\subseteq
\{1,\ldots,d_{s+1}\}$ denote the subset of indices $l$ such that $a_{1,l}$
is invertible, with term of order zero $\lambda_{1,l}\in\mbk$,
$\lambda_{1,l}\neq 0$. Suppose that $\Lambda_1\neq\emptyset$. Since
$\sord(f_{1,l})=s+1<r=\sord(f)$ and
$\sord(f_{1,l})=s+1<s+i+2\leq s+\xi\leq\sord(a_{0,l'}f_{0,l'})$, then, necessarily,
$\sum_{l\in\Lambda_1}\lambda_{1,l}f_{1,l}^{\sigma}=0$. Since
$\{f_{1,1}^{\sigma},\ldots,f_{1,d_{s+1}}^{\sigma}\}$ is a basis of
$V_{s+1}$, this implies $\lambda_{1,l}=0$, a contradiction. Therefore,
$\Lambda_1=\emptyset$, all $a_{1,l}$ are non-units and 
$\sord(a_{1,l}f_{1,l})\geq \xi+s+1\geq s+i+3$. Proceed by induction. 
Let $\Lambda_{i}\subseteq
\{1,\ldots,d_{s+i}\}$ denote the subset of indices $l$ such that $a_{i,l}$
is invertible, with term of order zero $\lambda_{i,l}\in\mbk$,
$\lambda_{i,l}\neq 0$. Suppose that $\Lambda_i\neq\emptyset$. Since
$\sord(f_{i,l})=s+i<r=\sord(f)$ and
$\sord(f_{i,l})=s+i<s+i+2\leq\sord(a_{0,l'}f_{0,l'}),\ldots, 
\sord(a_{i-1,l'}f_{i-1,l'})$, then, necessarily,
$\sum_{l\in\Lambda_i}\lambda_{i,l}f_{i,l}^{\sigma}=0$. Since
$\{f_{i,1}^{\sigma},\ldots,f_{i,d_{s+1}}^{\sigma}\}$ is a basis of
$V_{s+i}$, this implies $\lambda_{i,l}=0$, a contradiction. Therefore,
$\Lambda_i=\emptyset$ and all the $a_{i,l}$ are non-units. Thus, 
$a_{k,l}$ is not invertible, for all $k=1,\ldots,i$. But then,
$\sord(a_{k,l}f_{k,l})\geq \xi+s>r$, whereas $\sord(f)=r$, a
contradiction again. Therefore, $f$ must be zero and 
$V_{s+i+1}=\ldots=V_{s+\xi-1}=\{0\}$.

So, when $\msb_i=\msc_{i-1}$, take $\msb_{i+1}=\ldots=\msb_{\xi-1}=\emptyset$ and 
$\msc_i=\msc_{i+1}=\ldots=\msc_{\xi-1}=\emptyset$. Then, 
$\msb_0,\ldots,\msb_{\xi-1},\msc_{\xi-1}$ 
satisfy the required properties, namely, 
$\msb_0,\ldots,\msb_{\xi-1},\msc_{\xi-1}$ are included in $\langle \msc\rangle_{\Bbbk}$,
$\msb_k^{\sigma}$ is a (possibly empty) basis of $V_{s+k}$,
$\msb_0,\ldots,\msb_{\xi-1},\msc_{\xi-1}$ are pairwise disjoint,
\begin{multline*}
\sum_{k=0}^{\xi-1}\dim V_{s+k}=\sum_{k=0}^{i}\dim V_{s+k}=
\card(\msb_0\cup\ldots\cup\msb_{i})=\\
\card(\msb_0\cup\ldots\cup\msb_{\xi-1}\cup\msc_{\xi-1})=
\card(\msb_0\cup\ldots\cup\msb_{i-1}\cup\msc_{i-1})\leq\card(\msc).
\end{multline*}
and $\msb_0\cup\ldots\cup\msb_{\xi-1}\cup\msc_{\xi-1}=
\msb_0\cup\ldots\msb_{i-1}\cup\msc_{i-1}$ is a generating set of $Q$. 

\vspace*{0,2cm}

\noindent \underline{Suppose that $\msb_i\subsetneq\msc_{i-1}$}. 
Let $g\in\msc_{i-1}\setminus\msb_i$, in particular, $\sord(g)\geq s+i$. If
$\sord(g)=s+i$, then $g^{\sigma}=
\sum_{l=1}^{d_{s+i}}\lambda_{l}f_{i,l}^{\sigma}$, for some
$\lambda_{l}\in\mbk$. Therefore, $g-\sum_{l=1}^{d_{s+i}}\lambda_{l}f_{i,l}\in Q$
and $\sord(g-\sum_{l=1}^{d_{s+i}}\lambda_{l}f_{i,l})>s+i$. Let
\begin{multline*}
\msc_{i,1}:=\left\{g\in\msc_{i-1}\setminus\msb_i\mid\sord(g)>s+i\right\}
\subseteq\msc_{i-1}\setminus\msb_i\mbox{ and }\\
\msc_{i,2}:=\left\{g-\sum_{l=1}^{d_{s+i}}\lambda_lf_{i,l}\bigg|\;
g\in\msc_{i-1}\setminus\msb_i\;, \sord(g)=s+i\;, 
g^{\sigma}=\sum_{l=1}^{d_{s+i}}\lambda_lf_{i,l}^{\sigma}\right\}\subset
\langle\msc\rangle_{\Bbbk}.
\end{multline*}
Take $\msc_i=\msc_{i,1}\cup\msc_{i,2}$. Since $\msb_i\subsetneq\msc_{i-1}$, then
there exists $g\in\msc_{i-1}\setminus\msb_i$, so 
$\sord(g)\geq s+i$; if $\sord(g)>s+i$, then $g\in\msc_{i,1}\subseteq\msc_{i}$; if $\sord(g)=s+i$, 
then $g-\sum_{l=1}^{d_{s+i}}\lambda_lf_{i,l}\in\msc_{i,2}\subseteq\msc_{i}$. In any case, $\msc_i\neq\emptyset$.

Note that the elements of $\msc_i$ have $\sigma$-order bigger than $s+i$, so
$\msb_0,\ldots,\msb_{i-1},\msb_i$ and $\msc_i$ are pairwise disjoint and 
$\card(\msb_0\cup\ldots\cup\msb_i\cup\msc_i)=
\card(\msb_0)+\cdots+\card(\msb_i)+\card(\msc_i)$. Observe that  
$\msc_{1,1}$ and $\msc_{1,2}$ are not necessarily disjoint sets, so
\begin{multline*}
\card(\msc_i)=\card(\msc_{i,1}\cup\msc_{i,2})\leq\card(\msc_{i,1})+\card(\msc_{i,2})\leq\\
\card(\{g\in\msc_{i-1}\setminus\msb_i\mid\sord(g)>s+i\})+
\card(\{g\in\msc_{i-1}\setminus\msb_i\mid\sord(g)=s+i\})=\\
\card(\msc_{i-1}\setminus\msb_i).
\end{multline*}
Therefore,
\begin{multline*}
\sum_{k=0}^{s+i}d_{s+k}=\sum_{k=0}^{s+i}\card(\msb_k)<
\sum_{k=0}^{s+i}\card(\msb_k)+\card(\msc_i)=\\
\card(\msb_0\cup\ldots\cup\msb_i\cup\msc_i)
\leq\sum_{k=0}^{s+i-1}\card(\msb_k)+\card(\msb_i)+
\card(\msc_{i-1}\setminus\msb_i)=\\\sum_{k=0}^{s+i-1}\card(\msb_k)+\card(\msc_{i-1})=
\card(\msb_0\cup\ldots\cup\msb_{i-1}\cup\msc_{i-1})\leq\card(\msc).
\end{multline*}
Since, $\msb_0,\ldots,\msb_i,\msc_{i,1},\msc_{i,2}\subset\langle\msc\rangle_{\Bbbk}$, 
then the ideal generated by
$\msb_0\cup\ldots\cup\msb_i\cup\msc_i\subseteq (\msc)$, so 
$(\msb_0\cup\ldots\cup\msb_i\cup\msc_i)\subseteq (\msc)=Q$. 
Let us prove that $Q\subseteq (\msb_0\cup\ldots\cup\msb_i\cup\msc_i)$.
By {\sc Step $i-1$}, 
$Q=(\msb_0\cup\ldots\cup\msb_{i-1}\cup\msc_{i-1})$. 
Let $g\in\msb_0\cup\ldots\cup\msb_{i-1}\cup\msc_{i-1}$. 
If $g\in\msb_0\cup\ldots\cup\msb_{i}$ or
$g\in\msc_{i-1}\setminus\msb_i$ and $\sord(g)>s+i$, then, clearly, 
$g\in\msb_0\cup\ldots\cup\msb_i\cup\msc_{i,1}\subset \msb_0\cup\ldots\cup\msb_i\cup\msc_{i}$.
If $g\in\msc_{i-1}\setminus\msb_i$ and $\sord(g)=s+i$, then
$g=\sum_{l=1}^{d_{s+i}}\lambda_lf_{i,l}+(g-\sum_{l=1}^{d_{s+i}}\lambda_lf_{i,l})$
and $g\in\langle\msb_i\cup\msc_{i,2}\rangle_{\Bbbk}\subset (\msb_i\cup\msc_i)$. Therefore, 
$Q=(\msb_0\cup\ldots\cup\msb_{i-1}\cup\msc_{i-1})\subseteq(\msb_0\cup\ldots\cup\msb_i\cup\msc_i)$
and $\msb_0\cup\ldots\cup\msb_i\cup\msc_i$ is a set of generators of $Q$.

Summarizing, when $\msb_i\subsetneq\msc_{i-1}$, we obtain two disjoint sets 
$\msb_i=\{f_{i,1},\ldots,f_{i,d_{s+i}}\}$ and $\msc_i=\{g_{i,1},\ldots,g_{i,e_{s+i}}\}$, say, with
$\card(\msb_i)=d_{s+i}>0$ and $\card(\msc_i)=e_{s+i}>0$, $\msb_i$ and $\msc_i$ 
included in $\langle\msc\rangle_{\Bbbk}$; with $\msb_i^{\sigma}$ a basis of $V_{s+i}$
and $\sord(g_{i,l})>s+i$, $\msb_0,\ldots,\msb_i,\msc_i$ pairwise disjoint
and such that $\msb_0\cup\ldots\cup\msb_i\cup\msc_i$ is a generating set of $Q$ with
$\sum_{k=0}^id_{s+k}<\card(\msb_0\cup\ldots\cup\msb_i\cup\msc_i)\leq\card(\msc)$. 

\vspace*{0,2cm}

In the end, we deduce the existence of $\msb_0,\ldots,\msb_{\xi-1},\msc_{\xi-1}$,
a family of pairwise disjoint subsets of $\langle \msc\rangle_{\Bbbk}$;
$\msb_i^\sigma$ a (possible empty) basis of $V_{s+i}$
and, if $\msc_{\xi-1}\neq\emptyset$ and $g\in\msc_{\xi-1}$, then $\sord(g)\geq s+\xi$;
where $\msb_0\cup\ldots\cup\msb_{\xi-1}\cup\msc_{\xi-1}$ 
is a generating set of $Q$ and 
\begin{eqnarray*}
\sum_{i=0}^{\xi-1}\dim V_{s+i}\leq\card(\msb_0\cup\ldots\cup\msb_{\xi-1}
\cup\msc_{\xi-1})\leq\card(\msc).
\end{eqnarray*}
Since $\mbk\lser\monx \rser$ is Noetherian local, 
all the minimal generating sets of $Q$ have the same cardinality 
(see, e.g., \cite[Theorem~2.3]{matsumura}). Therefore, 
if $\msc$ is a minimal generating set of $Q$, since
$\msb_0\cup\ldots\cup\msb_{\xi-1}\cup\msc_{\xi-1}$ is a generating set of $Q$ whose cardinality is smaller than or equal to the cardinality of $\msc$, one deduces
that $\msb_0\cup\ldots\cup\msb_{\xi-1}\cup\msc_{\xi-1}$ is also
a minimal generating set of $Q$ of cardinality equals to 
$\card(\msc)=\mu(Q)$, which concludes the proof. 
\end{proof}

\vspace*{0,2cm}

\begin{proof}[Proof of Theorem~\ref{moh43-2}] 
Let $\msd_0,\ldots,\msd_{\xi-1}\subset Q$ such that
$\msd_i=\emptyset$ or $\msd_i^{\sigma}$ is a linearly independent subset of $V_{s+i}$. 
In this last case, set $\msd_i=\{u_{i,1},\ldots,u_{i,\delta_i}\}$, 
where $\card(\msd_i)=\delta_i$.
Let $\msc$ be a minimal generating set of $Q$. Let 
$\msb_0,\ldots,\msb_{\xi-1},\msc_{\xi-1}\subset\langle\msc\rangle_{\Bbbk}$ 
be defined as in Theorem~\ref{moh43-1}. In particular, when they are not empty, set $\msb_i=\{f_{i,1},\ldots,f_{i,d_{s+i}}\}$ and
$\msc_{\xi-1}=\{g_{\xi-1,1},\ldots,g_{\xi-1,e_{s+\xi-1}}\}$,
where $\msb_i^\sigma$ is a basis of
$V_{s+i}$, $\sord(g_{\xi-1,l})>s+\xi-1$,
and $\msb_0\cup\ldots\cup\msb_{\xi-1}\cup\msc_{\xi-1}$
is a minimal generating set of $Q$. 
Let us iteratively construct, for each $i=0,\ldots,\xi-1$, 
$\msb^{\prime}_i$ satisfying the following list of conditions $\mbl_i$.
\begin{itemize}
\item[$\bullet$] $\msb^{\prime}_i$ is a subset of $\msb_i$.
\item[$\bullet$] 
$\msd_i^{\sigma}\cup(\msb_i^{\prime})^{\sigma}$ is a basis of $V_{s+i}$,
understanding that $\msd_i^{\sigma}=\emptyset$ if $\msd_i=\emptyset$,
or $(\msb_i^{\prime})^{\sigma}=\emptyset$ if $\msb_i^{\prime}=\emptyset$.
\item[$\bullet$]  $\msd_0,\msb^{\prime}_0,
\ldots,\msd_{i},\msb^{\prime}_i,\msb_{i+1},\ldots,
\msb_{\xi-1},\msc_{\xi-1}$ are pairwise disjoint subsets of $Q$.
\item[$\bullet$] $(\msd_0\cup\msb^{\prime}_0)\cup
\ldots\cup(\msd_{i}\cup\msb^{\prime}_i)\cup\msb_{i+1}
\cup\ldots\cup \msb_{\xi-1}\cup\msc_{\xi-1}$ 
is a minimal generating set of $Q$.
\end{itemize}
In the end, once all the $\msb^{\prime}_i$ are constructed satisfying the conditions
$\mbl_i$, we deduce that
\begin{eqnarray*}
(\msd_0\cup\msb^{\prime}_0)\cup
\ldots\cup(\msd_{\xi-1}\cup\msb^{\prime}_{\xi-1})\cup\msc_{\xi-1}
\end{eqnarray*}
is a minimal generating set of $Q$. In particular, 
$\msd_0\cup\ldots\cup\msd_{\xi-1}$ can be extended to a minimal generating set of $Q$. 
The proof basically consists on applying repeatedly the
Steinitz Exchange Lemma.

\vspace*{0,2cm}

\noindent \underline{\sc Step $0$: Construction of $\msb_0^{\prime}$}.

\vspace*{0,2cm} 

If $\msd_0=\emptyset$, let $\msb_0^{\prime}=\msb_0$, which clearly satisfies 
the conditions $\mbl_0$. Suppose that 
$\msd_0\neq \emptyset$. Take $u_{0,1}\in\msd_0\subset Q$. Since 
$\msb_0\cup\ldots\cup\msb_{\xi-1}\cup\msc_{\xi-1}$ is a generating set of the ideal $Q$,
it follows that
\begin{eqnarray*}\label{eq-casi2}
u_{0,1}=\sum_{k=0}^{\xi-1}\sum_{l=1}^{d_{s+k}}a^{k,l}_{0,1}f_{k,l}
+\sum_{l=1}^{e_{s+\xi-1}}b^{l}_{0,1}g_{\xi-1,l},
\end{eqnarray*}
where $a^{k,l}_{0,1},b^l_{0,1}\in\mbk\lser\monx\rser$. Note that, if $a_{0,1}^{0,l}$
is not a unit and $a_{0,1}^{0,l}\neq 0$, then $\sord(a_{0,1}^{0,l}f_{0,l})\geq \xi+s>s=\sord(u_{0,1})$. Moreover, $\sord(g_{\xi-1,l})\geq s+\xi>s=\sord(u_{0,1})$. 
In particular, the subset  
$\Lambda_{0,1}\subseteq \{1,\ldots,d_{s}\}$ of
indices $l$ such that $a^{0,l}_{0,1}$ is invertible, with term of
order zero $\lambda^{0,l}_{0,1}\in\mbk$, $\lambda^{0,l}_{0,1}\neq 0$, is not empty. 
Taking $\sigma$-leading forms in both parts of the equality, one deduces that
$u_{0,1}^{\sigma}=\sum_{l\in\Lambda_{0,1}}\lambda^{0,l}_{0,1}f_{0,l}^{\sigma}$.
Renaming the elements of $\msb_0$, we can suppose that
$1\in\Lambda_{0,1}$, so $\lambda_{0,1}^{0,1}$ and $a_{0,1}^{0,1}$ are
invertible. Set $\msb_{0,1}:=\{u_{0,1},f_{0,2},\ldots,f_{0,d_s}\}$. Clearly,
$f_{0,1}^{\sigma}$ is in the vector space spanned by $\msb_{0,1}^{\sigma}$
and $\msb_{0,1}^{\sigma}$ is a basis of $V_s$. Moreover, since $a_{0,1}^{0,1}$ is invertible,
$f_{0,1}$ is in the ideal generated by $\msb_{0,1}\cup
\msb_{1}\cup\ldots\cup\msb_{\xi-1}\cup\msc_{\xi-1}$. Thus,
$\msb_{0,1}\cup\msb_{1}\cup\ldots\cup\msb_{\xi-1}\cup\msc_{\xi-1}$
is a minimal generating set of $Q$. 

If $\delta_0=1$, i.e., $\msd_0=\{u_{0,1}\}$, take $\msb_0^{\prime}=\msb_{0,1}\setminus\msd_0$, 
which satisfies the conditions $\mbl_0$. 

Suppose that $\delta_0\geq 2$. So, let $u_{0,2}\in\msd_0\subset Q$. Then,
\begin{eqnarray*}
u_{0,2}=a_{0,2}^{0,1}u_{0,1}+\sum_{l=2}^{d_s}a_{0,2}^{0,l}f_{0,l}+
\sum_{k=1}^{\xi-1}\sum_{l=1}^{d_{s+k}}a^{k,l}_{0,2}f_{k,l}
+\sum_{l=1}^{e_{s+\xi-1}}b^{l}_{0,2}g_{\xi-1,l},
\end{eqnarray*}
where $a^{k,l}_{0,2},b^l_{0,2}\in\mbk\lser\monx\rser$. Note that, if $a_{0,2}^{0,l}$
is not a unit and $a_{0,2}^{0,l}\neq 0$, then $\sord(a_{0,2}^{0,l}f_{0,l})\geq \xi+s>s=\sord(u_{0,2})$. Moreover, $\sord(g_{\xi-1,l})\geq s+\xi>s=\sord(u_{0,1})$. 
Let $\Lambda_{0,2}\subseteq \{2,\ldots,d_{s}\}$ be the subset of
indices $l$ such that $a^{0,l}_{0,2}$ is invertible, 
with term of order zero $\lambda_{0,2}^{0,l}\in\mbk$, 
$\lambda^{0,l}_{0,2}\neq 0$. 
If $\Lambda_{0,2}$ were empty, on taking
$\sigma$-orders in the equality above, one would deduce that
$u_{0,2}^{\sigma}=\lambda_{0,2}^{0,1}u_{0,1}^{\sigma}$, a
contradiction because $\msd_{0}^{\sigma}$ is a linearly independent subset of $V_s$. 
Thus, $\Lambda_{0,2}\neq\emptyset$ and
$u_{0,2}^{\sigma}=\lambda_{0,2}^{0,1}u_{0,1}^{\sigma}+
\sum_{l\in\Lambda_{0,2}}\lambda^{0,l}_{0,2}f_{0,l}^{\sigma}$.
Renaming the elements of $\msb_{0,1}\setminus\{u_{0,1}\}$, we can suppose that
$2\in\Lambda_{0,2}$, so $\lambda_{0,2}^{0,2}$ and $a_{0,2}^{0,2}$ are
invertible. Set $\msb_{0,2}:=\{u_{0,1},u_{0,2},f_{0,3},\ldots,
f_{0,d_s}\}$. Clearly, $f_{0,2}^{\sigma}$ is in the vector space spanned by
$\msb_{0,2}^{\sigma}$ and $\msb_{0,2}^{\sigma}$ is a basis of $V_s$. Furthermore,
since $a_{0,2}^{0,2}$ is invertible, it follows that $f_{0,2}$ is in the ideal generated by
$\msb_{0,2}\cup\msb_{1}\cup\ldots\cup\msb_{\xi-1}\cup\msc_{\xi-1}$. Thus, 
$\msb_{0,2}\cup\msb_{1}\cup\ldots\cup\msb_{\xi-1}\cup\msc_{\xi-1}$
is a minimal generating set of $Q$. 

If $\delta_0=2$, i.e., $\msd_0=\{u_{0,1},u_{0,2}\}$, take 
$\msb_0^{\prime}=\msb_{0,2}\setminus\msd_0$, 
which clearly satisfies the conditions $\mbl_0$.

If $\delta_0>2$, proceed recursively to substitute all the elements of $\msd_0$ in $\msb_0$. After some renaming, one gets $\msb_{0,\delta_0}:=\{u_{0,1},\ldots,u_{0,\delta_0},f_{0,\delta_0+1},\ldots,f_{0,d_s}\}$, such that $\msb_{0,\delta_0}\cup\msb_{1}\cup\ldots\cup\msb_{\xi-1}\cup\msc_{\xi-1}$ is a minimal generating set of $Q$. Then, the set $\msb_0^{\prime}:=\msb_{0,\delta_0}\setminus\msd_0$ 
satisfies the conditions $\mbl_0$. For further convenience, call
$u_{0,\delta_0+1},\ldots,u_{0,d_s}$ to the elements of $\msb_0^{\prime}$.

\vspace*{0,2cm}

\noindent \underline{Step $i$, $1\leq i\leq \xi-1$: Construction of $\msb_{i}^{\prime}$.}

\vspace*{0,2cm} 

In particular, $\xi\geq i+1$. By the step $i-1$, there exist a family of subsets
$\msb_0^{\prime},\ldots,\msb_{i-1}^{\prime}$, each $\msb_k^{\prime}$ contained in $\msb_k$, 
satisfying the conditions of $\mbl_{i-1}$. That is,
$\msd_k^{\sigma}\cup(\msb_k^{\prime})^{\sigma}$ is a (possibly empty) basis of $V_{s+k}$, for $k=0,\ldots,i-1$; the family of subsets of $Q$, 
$\msd_0,\msb^{\prime}_0,
\ldots,\msd_{i-1},\msb^{\prime}_{i-1},\msb_{i},\ldots,
\msb_{\xi-1},\msc_{\xi-1}$ are pairwise disjoint and their union
$(\msd_0\cup\msb^{\prime}_0)\cup\ldots\cup(\msd_{i-1}\cup\msb^{\prime}_{i-1})\cup\msb_{i}
\cup\ldots\cup \msb_{\xi-1}\cup\msc_{\xi-1}$ is a minimal generating set of $Q$. 
For the sake of convenience, set $\msb_k^{\prime}=\{u_{k,\delta_{k}+1},\ldots,u_{k,d_{s+k}}\}$, 
$0\leq k\leq i-1$.

If $\msd_i=\emptyset$, let $\msb_i^{\prime}=\msb_i$, which satisfies the conditions $\mbl_i$.
Suppose that $\msd_i\neq\emptyset$. Take $u_{i,1}\in\msd_i\subset Q$. Since 
$(\msd_0\cup\msb^{\prime}_0)\cup\ldots\cup(\msd_{i-1}\cup\msb^{\prime}_{i-1})\cup\msb_{i}
\cup\ldots\cup \msb_{\xi-1}\cup\msc_{\xi-1}$ is a minimal generating set of $Q$, then 
\begin{eqnarray}\label{eq-d}
u_{i,1}=\sum_{k=0}^{i-1}\sum_{l=1}^{d_{s+k}}a^{k,l}_{i,1}u_{k,l}+
\sum_{k=i}^{\xi-1}\sum_{l=1}^{d_{s+k}}a^{k,l}_{i,1}f_{k,l}
+\sum_{l=1}^{e_{s+\xi-1}}b^{l}_{i,1}g_{\xi-1,l},
\end{eqnarray}
where $a^{k,l}_{i,1},b^l_{i,1}\in\mbk\lser\monx\rser$, $f_{k,l}\in\msb_k$, 
$g_{\xi-1,l}\in\msc_{\xi-1}$.  
If $a_{i,1}^{k,l}$ is not a unit and $a_{i,1}^{k,l}\neq 0$, 
then $\sord(a_{i,1}^{k,l}u_{k,l})\geq \xi+s+k>s+i=\sord(u_{i,1})$, $0\leq k\leq i-1$. 
Let $\Lambda_{i,1}^0\subseteq \{1,\ldots,d_{s}\}$ denote the subset of
indices $l$ such that $a^{0,l}_{i,1}$ is invertible, with term of
order zero $\lambda^{0,l}_{i,1}\in\mbk$, $\lambda^{0,l}_{i,1}\neq 0$.
Suppose that $\Lambda_{i,1}^0\neq\emptyset$. Taking $\sigma$-leading
forms in Equality~\eqref{eq-d}, we deduce that
$\sum_{l\in\Lambda_{i,1}^0}\lambda^{0,l}_{i,1}u_{0,l}^{\sigma}=0$. Since
$\msd_0^\sigma\cup(\msb_0^{\prime})^\sigma$ is a basis of $V_s$, this implies that
$\lambda^{0,l}_{i,1}=0$, which is a contradiction. Therefore, 
$a_{i,1}^{0,l}$ are non-units, for all $l=1,\ldots,d_s$. Proceed by induction 
on $k=1,\ldots,i-1$. 
Let $\Lambda_{i,1}^k\subseteq \{1,\ldots,d_{s+k}\}$ denote the subset of
indices $l$ such that $a^{k,l}_{i,1}$ is invertible, with term of
order zero $\lambda^{k,l}_{i,1}\in\mbk$, $\lambda^{k,l}_{i,1}\neq 0$.
Suppose that $\Lambda_{i,1}^k\neq\emptyset$. Taking $\sigma$-leading
forms in Equality~\eqref{eq-d}, we deduce that
$\sum_{l\in\Lambda_{i,1}^k}\lambda^{k,l}_{i,1}u_{k,l}^{\sigma}=0$. Since
$\msd_k^\sigma\cup(\msb_k^{\prime})^\sigma$ is a basis of $V_{s+k}$, this implies that
$\lambda^{k,l}_{i,1}=0$, which is a contradiction. Therefore, 
$a_{i,1}^{k,l}$ are non-units, for all $l=1,\ldots,d_{s+k}$. 
Similarly, one proves that $a_{i,1}^{k,l}$ are non-units, for all $k=1,\ldots,i-1$ and
all $l=1,\ldots,d_{s+k}$.

Let now $\Lambda_{i,1}^i\subseteq
\{1,\ldots,d_{s+i}\}$ be the subet of indices $l$ such that
$a_{i,1}^{i,l}$ is invertible, with term of order zero
$\lambda_{i,1}^{i,l}\in\mbk$, $\lambda_{i,1}^{i,l}\neq 0$. Taking
$\sigma$-leading forms, one deduces that
$u_{i,1}^{\sigma}=\sum_{l\in\Lambda_{i,1}^i}\lambda_{i,1}^{i,l}f_{i,1}^{\sigma}$.
Since $\msb_i^\sigma=\{f_{i,1}^{\sigma},\ldots,f_{i,d_{s+i}}^{\sigma}\}$ is a basis of $V_{s+i}$ and 
$u_{i,1}^{\sigma}\in V_{s+i}$, $u_{i,1}^\sigma\neq 0$, 
renaming the elements of $\msb_i$, one can suppose that
$1\in\Lambda_{i,1}^{i}$, so $\lambda_{i,1}^{i,1}$ and
$a_{i,1}^{i,1}$ are invertible. Set
$\msb_{i,1}=\{u_{i,1},f_{i,2},\ldots,f_{i,d_{s+i}}\}$. Then,
$f_{i,1}^{\sigma}$ is in the vector space spanned by $\msb_{i,1}^{\sigma}$ and
$\msb_{i,1}^{\sigma}$ is a basis of $V_{s+i}$. Moreover, since $a_{i,1}^{i,1}$ is invertible, 
$f_{i,1}$ is in the ideal generated by 
\begin{eqnarray*}
(\msd_0\cup\msb_0^{\prime})\cup\ldots\cup(\msd_{i-1}\cup\msb_{i-1}^{\prime})\cup
\msb_{i,1}\cup\msb_{i+1}\cup\ldots\cup\msb_{\xi-1}\cup\msc_{\xi-1},
\end{eqnarray*}
which is a minimal generating set for $Q$. 

If $\delta_i=1$, i.e., $\msd_i=\{u_{i,1}\}$, take 
$\msb_i^{\prime}=\msb_{i,1}\setminus\msd_i$, which
satisfies $\mbl_i$. Suppose that $\delta_i\geq 2$. Let $u_{i,2}\in\msd_i\subset Q$. Then, 
similarly as before, write 
\begin{eqnarray*}
u_{i,2}=\sum_{k=0}^{i-1}\sum_{l=1}^{d_{s+k}}a^{k,l}_{i,2}u_{k,l}+
a_{i,2}^{i,1}u_{i,1}+\sum_{l=2}^{d_{s+i}}a_{i,2}^{i,l}f_{i,l}+
\sum_{k=i+1}^{\xi-1}\sum_{l=1}^{d_{s+k}}a^{k,l}_{i,2}f_{k,l}
+\sum_{l=1}^{e_{s+\xi-1}}b^{l}_{i,2}g_{\xi-1,l}.
\end{eqnarray*}
As before, one first deduces that $a_{i,2}^{k,l}$ are non-units, for $k=0,\ldots,i-1$, and 
$l=1,\ldots,d_{s+k}$. So, $\sord(a_{i,2}^{k,l}u_{k,l})\geq \xi+s+k>s+i=\sord(u_{i,2})$.
Let $\Lambda_{i,2}\subseteq \{2,\ldots,d_{s+i}\}$ denote the
subset of indices $l\geq 2$ such that $a^{i,l}_{i,2}$ is invertible, with
term of order zero $\lambda^{i,l}_{i,2}\in\mbk$,
$\lambda^{i,l}_{i,2}\neq 0$. If $\Lambda_{i,2}$ were empty, on taking
$\sigma$-leading forms in the equality above one would deduce that
$u_{i,2}^{\sigma}=\lambda_{i,2}^{i,1}u_{i,1}^{\sigma}$, a
contradiction with $\msd_{i}^{\sigma}$ being a set of linearly independent elements of
$V_{s+i}$. Thus $\Lambda_{i,2}\neq\emptyset$ and
$u_{i,2}^{\sigma}=\lambda_{i,2}^{i,1}u_{i,1}^{\sigma}+
\sum_{l\in\Lambda_{i,2}}\lambda^{i,l}_{i,2}f_{i,l}^{\sigma}$.
Renaming the elements of $\msb_{i,1}$, we can suppose that
$2\in\Lambda_{i,2}$, so $\lambda_{i,2}^{i,2}$ and $a_{i,2}^{i,2}$ are
invertible. Set $\msb_{i,2}=\{u_{i,1},u_{i,2},f_{i,3},\ldots,
f_{i,d_{s+i}}\}$. Then, $f_{i,2}^{\sigma}$ is in the vector space spanned by 
$\msb_{i,2}^{\sigma}$ and $\msb_{i,2}^{\sigma}$ is a basis of $V_{s+i}$. Furthermore,
since $a_{i,2}^{i,2}$ is invertible, $f_{i,2}$ is in the ideal generated by 
\begin{eqnarray*}
(\msd_0\cup\msb_0^{\prime})\cup\ldots\cup(\msd_{i-1}\cup\msb_{i-1}^{\prime})\cup
\msb_{i,2}\cup\msb_{i+1}\cup\ldots\cup\msb_{\xi-1}\cup\msc_{\xi-1},
\end{eqnarray*}
which is a minimal generating set for $Q$. If $\delta_i=2$, i.e., $\msd_{i}=\{u_{i,1},u_{i,2}\}$, 
take $\msb_i^{\prime}=\msb_{i,2}\setminus\msd_{i}$, which clearly satisfies the conditions $\mbl_i$. 
Otherwise, proceed recursively to substitute all the elements of $\msd_i$ in $\msb_i$. After some renaming, one gets $\msb_{i,\delta_i}:=\{u_{i,1},\ldots,u_{i,\delta_i},f_{i,\delta_{i}+1},\ldots,f_{i,d_{s+i}}\}$, where 
\begin{eqnarray*}
(\msd_0\cup\msb_0^{\prime})\cup\ldots\cup(\msd_{i-1}\cup\msb_{i-1}^{\prime})\cup
\msb_{i,\delta_i}\cup\msb_{i+1}\cup\ldots\cup\msb_{\xi-1}\cup\msc_{\xi-1}
\end{eqnarray*}
is a minimal generating set of $Q$. Then, the set 
$\msb_i^{\prime}:=\msb_{i,\delta_i}\setminus\msd_i$ satisfies the conditions of $\mbl_i$, which finishes the whole proof. 
\end{proof}

\section{The second prime of Moh, in any characteristic}\label{section-MohN3}

The main purpose of this section is to find, 
for each characteristic of the field $\mbk$, a minimal generating set for the prime ideal $P_3$ of Moh, concretely,  when $\lambda=25$. We deduce that $\mu(P_3)$ may decrease depending on $\charac(\mbk)$, contradicting a statement of Sally. The study of the prime ideal $P_1$ of Moh is treated in Proposition~\ref{prop-casen1}, at the end of the section.

Since this part is mainly focused in $P_3$, we will omit the subscript $3$ wherever possible. Thus, $\mbk$ is a field and $\rho:=\rho_3$ is the $\mbk$-algebra morphism $\rho:\mbk\lser x,y,z\rser\to\mbk\lser t\rser$ defined by Moh in \cite{moh1}, with $n=3$, $m=2$ and $\lambda=25$, so $\rho(x)=t^6+t^{31}=t^6(1+t^{25})$, $\rho(y)=t^8$ and $\rho(z)=t^{10}$. Then, $P:=P_3=\ker(\rho)$. 
Moreover, $\sigma:\mbk\lser x,y,z\rser\to\mbk\lser x,y,z\rser$ is the $\mbk$-algebra morphism defined by $\sigma(x)=x^3$, $\sigma(y)=y^4$ and $\sigma(z)=z^5$ and $\nums=\langle 3,4,5\rangle$ is the numerical semigroup generated by $3,4,5$. Fixed $\sigma$ and $P=P_3$, and given $r\in\nums$, we consider the corresponding subspaces $W_r=W_r(\sigma)$ and $V_r=V_r(\sigma,P)$. 

\begin{remark}
For two natural numbers $m,q$, let $b_{m,q}\in\mbk$ be the image of the binomial coefficient $\binom{m}{q}\in\mbn$, where $\binom{m}{q}=0$ if $m<q$, through the ring homomorphism $\mbz\to\mbk$. 
The well-known Theorem of Lucas says that $\binom{m}{q}\equiv \prod_{i=0}^{k}\binom{m_i}{q_i} \mod p$, where
$m=m_0+m_1\cdot p+\ldots +m_k\cdot p^k$ and $q=q_0+q_1\cdot p+\ldots +q_k\cdot p^k$, $0\leq m_i,q_i<p$. In particular, if $\charac(\mbk)=p$, then $b_{m,q}=0$ if and only if $m_i<q_i$, for some $i$.
\end{remark}

\begin{lemma}\label{lemmaVr0}
Let $r\in\nums$, $r\neq 0$. Let 
$\nu_r=\max\{\alpha_1\mid\alpha\in\fac(r,\nums)\}$.
The following hold.
\begin{itemize}
\item[$(1)$]
If $\alpha\in\fac(r,\nums)$, then 
  $\rho(\mon^\alpha)=t^{2r}(1+t^{25})^{\alpha_1}=t^{2r}
  \sum_{k=0}^{\alpha_1}b_{\alpha_1,k}t^{25k}$. In particular, 
  $\rho(\mon^\alpha)\neq 0$, $\{2r,2r+25\alpha_1\}\subseteq\supp(\rho(\mon^\alpha))\subseteq \{2r+25k\mid 0\leq k\leq \alpha_1\}$ and $\ord(\rho(\mon^{\alpha}))=2r$. 
\item[$(2)$] Let $g=\sum_{\alpha\in{\rm F}(r,\mathcal{S})}\lambda_{\alpha}\mon^{\alpha}\in W_r$. 
    If $g\in V_r$, then $\sum_{\alpha\in{\rm F}(r,\mathcal{S})}\lambda_{\alpha}=0$ and
$\sum_{\alpha\in{\rm F}(r,\mathcal{S})}b_{\alpha_{1},1}\lambda_{\alpha}=0$.
\end{itemize}
Furthermore, 
\begin{itemize}
\item[$(3)$] If $\dim W_r=1$, then $V_r=\{0\}$.
\item[$(4)$] If $W_r=\langle \mon^{\alpha},\mon^{\beta}\rangle$, $\alpha\neq\beta$,
  then $V_r\subseteq\langle \mon^{\alpha}-\mon^{\beta}\rangle$.
  Moreover, if $b_{\alpha_1,1}\neq b_{\beta_1,1}$, then $V_r=\{0\}$.
\item[$(5)$] Suppose that $W_r=\langle \mon^{\alpha},\mon^{\beta},\mon^{\gamma}\rangle$, with 
$\alpha$, $\beta$, $\gamma$ distinct, and 
\begin{eqnarray*}
(b_{\gamma_1,1}-b_{\beta_1,1},b_{\alpha_1,1}-b_{\gamma_1,1},b_{\beta_1,1}-b_{\alpha_1,1})\neq (0,0,0).
\end{eqnarray*}
Then $V_r\subseteq \langle (b_{\gamma_1,1}-b_{\beta_1,1})\mon^{\alpha}+
(b_{\alpha_1,1}-b_{\gamma_1,1})\mon^{\beta}+
(b_{\beta_1,1}-b_{\alpha_1,1})\mon^{\gamma}\rangle$.
\end{itemize}
\end{lemma}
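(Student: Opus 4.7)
My plan for Part~$(1)$ is a direct binomial expansion. For $\alpha \in \fac(r,\nums)$ I would write
\begin{eqnarray*}
\rho(\mon^\alpha) = \rho(x)^{\alpha_1}\rho(y)^{\alpha_2}\rho(z)^{\alpha_3} = t^{6\alpha_1 + 8\alpha_2 + 10\alpha_3}(1+t^{25})^{\alpha_1} = t^{2r}(1+t^{25})^{\alpha_1},
\end{eqnarray*}
using $6\alpha_1 + 8\alpha_2 + 10\alpha_3 = 2r$, and then expand via the binomial formula; the support assertions follow from $b_{\alpha_1,0} = b_{\alpha_1,\alpha_1} = 1$.

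Part~$(2)$ is the key content of the lemma, and I plan to attack it via a residue argument modulo $25$. Given $g = \sum_{\alpha \in \fac(r,\nums)}\lambda_\alpha\mon^\alpha \in V_r$, I would invoke Lemma~\ref{lemma-Vr} applied to $Q = P = \ker(\rho)$ to write $f := g + h \in P$ with either $h = 0$ or $\sord(h) > r$; in either case $\rho(g) + \rho(h) = 0$. From Part~$(1)$,
\begin{eqnarray*}
\rho(g) = t^{2r}\sum_{k \geq 0}\Big(\sum_{\alpha\in\fac(r,\nums)}\lambda_\alpha b_{\alpha_1,k}\Big)t^{25k},
\end{eqnarray*}
so the coefficients of $t^{2r}$ and $t^{2r+25}$ in $\rho(g)$ are exactly $\sum_\alpha \lambda_\alpha$ and $\sum_\alpha b_{\alpha_1,1}\lambda_\alpha$. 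What remains is to argue that $\rho(h)$ contributes nothing to those two coefficients. Writing $h = \sum_\beta c_\beta \mon^\beta$, each $\beta \in \supp(h)$ satisfies $r_\beta := 3\beta_1 + 4\beta_2 + 5\beta_3 > r$, and by Part~$(1)$ the support of $\rho(\mon^\beta)$ is contained in $\{2r_\beta + 25k \mid 0 \leq k \leq \beta_1\}$. The equation $2r_\beta + 25k = 2r$ with $r_\beta > r$ and $k \geq 0$ is impossible since the left-hand side strictly exceeds $2r$; and $2r_\beta + 25k = 2r + 25$ forces $k = 0$ (else the left-hand side exceeds $2r+25$), which yields $2(r_\beta - r) = 25$, impossible since $25$ is odd. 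So $\rho(h)$ has vanishing coefficients at $t^{2r}$ and $t^{2r+25}$, completing Part~$(2)$.

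For Parts~$(3)$--$(5)$ I would invoke Part~$(2)$ and solve the resulting linear system in the coefficients $(\lambda_\alpha)_{\alpha \in \fac(r,\nums)}$. In case~$(3)$, with a single factorization, the first equation forces $\lambda_\alpha = 0$. In case~$(4)$, with two factorizations $\alpha, \beta$, the first equation gives $\lambda_\beta = -\lambda_\alpha$, so $V_r \subseteq \langle \mon^\alpha - \mon^\beta\rangle$; the second then reads $\lambda_\alpha(b_{\alpha_1,1} - b_{\beta_1,1}) = 0$, forcing $\lambda_\alpha = 0$ whenever $b_{\alpha_1,1} \neq b_{\beta_1,1}$. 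In case~$(5)$, I would verify directly that the vector $(b_{\gamma_1,1}-b_{\beta_1,1},\; b_{\alpha_1,1}-b_{\gamma_1,1},\; b_{\beta_1,1}-b_{\alpha_1,1})$ satisfies both equations; under the stated nondegeneracy hypothesis the second row of the $2 \times 3$ coefficient matrix is not a scalar multiple of the first, so the matrix has rank two and its kernel is the one-dimensional line spanned by that vector.

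I expect the main obstacle to be the support analysis in Part~$(2)$: one must recognize that the oddness of $25$, together with the doubling $\rho(x) \sim t^{2 \cdot 3}$, $\rho(y) = t^{2 \cdot 4}$, $\rho(z) = t^{2 \cdot 5}$, cleanly separates the $t^{2r}$ and $t^{2r+25}$ coefficients of $\rho(g)$ from any higher-$\sigma$-order contributions coming from the tail $h$. Once that observation is in place, the remaining verifications are straightforward.
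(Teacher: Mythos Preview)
Your proposal is correct and follows essentially the same route as the paper. In Part~$(2)$ the paper groups the tail $h$ by $\sigma$-order level $s>r$ rather than monomial-by-monomial, but the decisive observation is identical to yours: the exponents in $\rho(h)$ are of the form $2s+25k$ with $s>r$, and neither $2r$ nor $2r+25$ can be hit because $2s>2r$, $2s\neq 2r+25$ by parity, and $2s+25k\geq 2s+25>2r+25$ for $k\geq 1$. Parts~$(1)$ and~$(3)$--$(5)$ in the paper are exactly the binomial expansion and the linear-algebra reductions you describe, with~$(5)$ phrased via the rank-$2$ matrix just as you suggest.
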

\begin{proof}
If $r\in\nums$ and $\alpha\in\fac(r,\nums)$, then
$r=\alpha_1\cdot 3+\alpha_2\cdot 4+\alpha_3\cdot 5$. Thus,
\begin{eqnarray*}
\rho(\mon^\alpha)=(t^6+t^{31})^{\alpha_1}t^{8\alpha_2}t^{10\alpha_3}=t^{2r}(1+t^{25})^{\alpha_1}=
t^{2r}\left(b_{\alpha_1,0}+b_{\alpha_1,1}t^{25}+\cdots +b_{\alpha_1,\alpha_1}t^{25\alpha_1}\right).
\end{eqnarray*}
Since $b_{\alpha_1,0}=b_{\alpha_1,\alpha_1}=1$ and $b_{\alpha_1,k}\geq 0$, for $k=1,\ldots,\alpha_1-1$, it follows that $\rho(\mon^{\alpha})\neq 0$ and 
\begin{eqnarray*}
\{2r,2r+25\alpha_1\}\subseteq\supp(\rho(\mon^\alpha))\subseteq 
\{2r+25k\mid 0\leq k\leq \alpha_1\}. 
\end{eqnarray*}
So, $\ord(\rho(\mon^\alpha))=2r$. This proves Item $(1)$.
Note that, depending on the characteristic of $\mbk$, some $b_{\alpha_1,k}$ might be zero. Therefore, 
the second containment above could be strict.

Let $g=\sum_{\alpha\in{\rm F}(r,\mathcal{S})}\lambda_{\alpha}\mon^{\alpha}\in W_r$. 
By Item $(1)$, and using that, for any $\alpha\in\fac(r,\nums)$, $\alpha_1\leq \nu_r$, then
\begin{eqnarray*}
&&\rho(g)=\sum_{\alpha\in{\rm F}(r,\mathcal{S})}
\lambda_{\alpha}t^{2r}
\left(b_{\alpha_1,0}+b_{\alpha_1,1}t^{25}+\cdots +b_{\alpha_1,\alpha_1}t^{25\alpha_1}\right)=\\
&&\sum_{\alpha\in{\rm F}(r,\mathcal{S})}
\lambda_{\alpha}t^{2r}
\left(b_{\alpha_1,0}+b_{\alpha_1,1}t^{25}+\cdots +b_{\alpha_1,\alpha_1}t^{25\alpha_1}+
\cdots+b_{\alpha_1,\nu_r}t^{25\nu_r}\right)=\\
&&\left(\sum_{\alpha\in{\rm F}(r,\mathcal{S})}b_{\alpha_1,0}\lambda_{\alpha}\right)t^{2r}+
\left(\sum_{\alpha\in{\rm F}(r,\mathcal{S})}b_{\alpha_1,1}\lambda_{\alpha}\right)t^{2r+25}+\cdots+
\left(\sum_{\alpha\in{\rm F}(r,\mathcal{S})}b_{\alpha_1,\nu_r}\lambda_{\alpha}\right)t^{2r+25\nu_r},
\end{eqnarray*}
where $b_{\alpha_1,k}=0$, whenever $k>\alpha_1$.
If $g\in\ker(\rho)$, then $\sum_{\alpha\in{\rm F}(r,\mathcal{S})}b_{\alpha_1,k}\lambda_{\alpha}=0$,
for all $0\leq k\leq \nu_r$. 
Suppose that $g\in V_r$, but $g\not\in\ker(\rho)$. 
By Lemma~\ref{lemma-Vr}, there exists $h\in\mbk\lser x,y,z\rser$, $h\neq 0$, $\sord(h)>r$, such that
$\rho(g)+\rho(h)=0$. Since $g\not\in\ker(\rho)$, then $\rho(h)\neq 0$.
Write $h=\sum_{s>r}h_{(s)}$, where
$h_{(s)}=\sum_{\alpha\in {\rm F}(s,\mathcal{S})}\mu_{\alpha}\mon^{\alpha}\in W_s$.
As before, and letting 
$\nu_s=\max\{\alpha_1\mid\alpha\in\fac(s,\nums)\}$, then 
\begin{eqnarray}\label{eq-rhoh}
&&\rho(h)=\sum_{s>r}\rho(h_{(s)})=\\
&&\sum_{s>r}\left[\left(\sum_{\alpha\in{\rm F}(s,\mathcal{S})}b_{\alpha_1,0}\mu_{\alpha}\right)t^{2s}+
\left(\sum_{\alpha\in{\rm F}(s,\mathcal{S})}b_{\alpha_1,1}\mu_{\alpha}\right)t^{2s+25}+\cdots+
\left(\sum_{\alpha\in{\rm F}(s,\mathcal{S})}b_{\alpha_1,\nu_s}\mu_{\alpha}\right)t^{2s+25\nu_s}\right].
\nonumber
\end{eqnarray}
Note that $t^{2s+25k}=t^{2s'+25k'}$ could occur for some $s'>s$ and some integers $k>k'$. But, if $s>r$, 
then $2r<2s<2s+25k$, $2r+25\neq 2s$ and $2r+25<2s+25k$, for every $k\geq 1$. Thus, for any $s>r$,
$t^{2s+25k}$ cannot be equal to $t^{2r}$ or $t^{2r+25}$. From the equality $\rho(g)+\rho(h)=0$
one deduces that
$\sum_{\alpha\in{\rm F}(r,\mathcal{S})}b_{\alpha_1,0}\lambda_{\alpha}=0$ and 
$\sum_{\alpha\in{\rm F}(r,\mathcal{S})}b_{\alpha_1,1}\lambda_{\alpha}=0$.
This proves Item $(2)$.

If $W_r=\langle \mon^\alpha\rangle$ and $g=\lambda\mon^{\alpha}\in V_r$, then, by Item $(2)$, 
$\lambda=0$ and $g=0$. This proves Item $(3)$.

Suppose that $W_r=\langle \mon^{\alpha},\mon^{\beta}\rangle$ and $g=\lambda\mon^{\alpha}+\mu\mon^{\beta}\in W_r$. 
By $(2)$, $\lambda+\mu=0$ and $b_{\alpha_1,1}\lambda+b_{\beta_1,1}\mu=0$. Thus, 
$g=\lambda(\mon^{\alpha}-\mon^{\beta})$. Moreover, 
if $b_{\alpha_1,1}\neq b_{\beta_1,1}$, then $\lambda=0$ and $g=0$, which proves Item $(4)$.

Suppose that $W_r=\langle \mon^\alpha,\mon^{\beta},\mon^{\gamma}\rangle$ and let 
$g=\lambda\mon^{\alpha}+\mu\mon^{\beta}+\eta\mon^{\gamma}\in V_r$. By Item $(2)$, 
$\lambda+\mu+\eta=0$ and $b_{\alpha_1,1}\lambda+b_{\beta_1,1}\mu+b_{\gamma_1,1}\eta=0$. 
By hypothesis, 
\begin{eqnarray*}
\rank\left(\begin{array}{ccc}1&1&1\\b_{\alpha_1,1}&b_{\beta_1,1}&b_{\gamma_1,1}\end{array}\right)=2
\end{eqnarray*}
and $(\lambda,\mu,\eta)\in\langle (b_{\gamma_1,1}-b_{\beta_1,1},b_{\alpha_1,1}-b_{\gamma_1,1},b_{\beta_1,1}-b_{\alpha_1,1})\rangle$, which shows Item $(5)$.
\end{proof}

\begin{convention}\label{grevlex}
From now on, a subset of monomials will be ordered from biggest to smallest, according to the local {\em negative degree reverse lexicographical order} $>_{\rm ds}$ on the set of monomials $\monset_d=\{\mon^{\alpha}\mid \alpha\in\mbn^d\}$, that is: $\mon^\alpha >_{\rm ds} \mon^\beta \Leftrightarrow \deg(\mon^\alpha)<\deg(\mon^\beta)$ or $\deg(\mon^\alpha)=\deg(\mon^\beta)$ and there exists $1\leq i\leq d-1$ such that $\alpha_d=\beta_d,\ldots, \alpha_{i+1} = \beta_{i+1}$ and $\alpha_i<\beta_i$. For instance, and for $d=3$, 
\begin{eqnarray*}
1>x>y>z>x^2>xy>y^2>xz>yz>z^2>x^3>x^2y>xy^2>y^3>x^2z>xyz>\ldots .
\end{eqnarray*}
A non-zero polynomial $f$ will be written as $f=\sum_{\nu=0}^{n}a_{\nu}\mon^{\alpha(\nu)}$, $a_\nu\in\mbk\setminus\{0\}$, with $\mon^{\alpha(0)}>\ldots>\mon^{\alpha(n)}$, so the leading monomial of $f$ is $\lmon(f)=\mon^{\alpha(0)}$ (see Notation~\ref{notation-ord}). 
A finite subset of polynomials $f_1,\ldots,f_s$ with distinct leading monomial terms will be listed according to $\lmon(f_1)>\ldots>\lmon(f_s)$. 
We will usually underline the $\sigma$-leading form of any polynomial with more than one term.
\end{convention}

\begin{remark}\label{remark-W16}
A simple calculation of $\fac(r,\nums)$ gives rise to the following $W_r$, $r\in\nums$, $3\leq r\leq 16$:
\begin{eqnarray*}
&&W_{3}=\langle x\rangle,\; W_{4}= \langle y\rangle,\; W_{5}= \langle z\rangle,\; W_{6}= \langle x^2\rangle,\;
W_{7}= \langle xy\rangle,\; W_{8}= \langle y^2, xz\rangle,\;
W_9=\langle yz,x^3\rangle,\; \\
&&W_{10}=\langle z^2,x^2y\rangle,\; 
W_{11}=\langle xy^2,x^2z\rangle,\;
W_{12}=\langle y^3,xyz,x^4\rangle,\;
W_{13}=\langle y^2z,xz^2,x^3y\rangle,\; \\
&&W_{14}=\langle yz^2,x^2y^2,x^3z\rangle,\; 
W_{15}= \langle z^3,xy^3,x^2yz,x^5\rangle,\;
W_{16}=\langle x^4y,y^4,xy^2z,x^2z^2\rangle.
\end{eqnarray*}
\end{remark}

\begin{lemma}\label{lemma-s}
Let $s=\min\{\sord(f)\mid f\in P,\; f\neq 0\}$. If $\charac(\mbk)=0$ or $p\geq 5$, then $s=12$.
If $\charac(\mbk)=2$, then $s=10$. If $\charac(\mbk)=3$, then $s=9$.
\end{lemma}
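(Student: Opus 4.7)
By Lemma~\ref{lemma-Vr}, $V_r \neq 0$ if and only if there exists $f \in P\setminus\{0\}$ with $\sord(f) = r$, so $s = \min\{r\in\nums : V_r\neq 0\}$ for the $V_r$ associated to $\sigma$ and $P$. I will scan $r \in \nums$ in increasing order, using Lemma~\ref{lemmaVr0} and the tabulation of $W_r$ just before the statement, until the first nonzero $V_r$ is located, separating cases according to $\charac(\mbk)$.

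For $r \in \{3,4,5,6,7\}$, $\dim W_r = 1$, so Lemma~\ref{lemmaVr0}(3) gives $V_r = 0$ in every characteristic. For $r \in \{8,9,10,11\}$, $W_r$ is spanned by two monomials with $x$-exponents $\{0,1\}$, $\{0,3\}$, $\{0,2\}$, $\{1,2\}$ respectively, and Lemma~\ref{lemmaVr0}(4) forces $V_r = 0$ whenever the two corresponding values $b_{\alpha_1,1}$ differ: this holds unconditionally for $r=8,11$ (values $0\neq 1$ and $1\neq 2$), fails only when $\charac(\mbk)=3$ for $r=9$, and fails only when $\charac(\mbk)=2$ for $r=10$. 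In each failure case Lemma~\ref{lemmaVr0}(4) still gives $V_r \subseteq \langle \mon^\alpha - \mon^\beta\rangle$, which I would upgrade to equality by producing an explicit element of $P$: in $\charac 3$, the element $f := yz - x^3 + xy^4z^3 - yz^6$ satisfies $\rho(f) = 0$ via $(1+t^{25})^3 = 1+t^{75}$, so $\sord(f)=9$ and $s=9$; in $\charac 2$, the element $f := z^2 - x^2y + y^5z^3$ satisfies $\rho(f) = 0$ via $(1+t^{25})^2 = 1+t^{50}$, so $\sord(f)=10$ and $s=10$.

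For $\charac(\mbk) = 0$ or $p \geq 5$, the scan reaches $r = 12$, with $W_{12} = \langle y^3, xyz, x^4\rangle$ and $x$-exponents $\{0,1,4\}$. The triple $(b_{4,1}-b_{1,1},\, b_{0,1}-b_{4,1},\, b_{1,1}-b_{0,1}) = (3,-4,1)$ is nonzero in $\mbk$, so Lemma~\ref{lemmaVr0}(5) yields $V_{12} \subseteq \langle 3y^3 - 4xyz + x^4\rangle$. Direct expansion of $(1+t^{25})^4 - 4(1+t^{25}) + 3$ gives
\[
\rho(x^4 - 4xyz + 3y^3) = 6t^{74} + 4t^{99} + t^{124},
\]
and these three terms can be cancelled by a single correction of $\sord > 12$, namely $-2y^3z^5 - 4xyz^6 - y^3z^{10}$ (whose respective $\rho$-images are $t^{74}$, $t^{74}+t^{99}$, and $t^{124}$). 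This produces $f := x^4 - 4xyz + 3y^3 - 2y^3z^5 - 4xyz^6 - y^3z^{10} \in P$ with $\sord(f)=12$, so $V_{12}\neq 0$ and $s=12$. The main obstacle is this last explicit construction: a priori the cancellation cascade could spawn further uncancellable terms, but here it closes in three steps precisely because the chosen monomials have very restricted $\rho$-support relative to the three target powers of $t$.
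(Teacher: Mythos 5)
Your proof is correct and follows essentially the same strategy as the paper: scan $V_r$ in increasing order through $\nums$ using Lemma~\ref{lemmaVr0}(3)--(5) to show $V_r=0$ below the critical value, then exhibit an explicit element of $P$ realizing $\sord=s$. The only differences are cosmetic choices of tail (e.g., $-2y^3z^5-4xyz^6-y^3z^{10}$ versus the paper's $\sigma$-homogeneous $-3y^3z^5-2xy^6z^2-x^2y^4z^3$, and $-yz^6$ versus $-y^6z^2$ in characteristic $3$), all of which cancel correctly.
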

 \begin{proof}
By Lemma~\ref{lemmaVr0}, $V_r=\{0\}$, 
for $3\leq r\leq 7$. Suppose that $\charac(\mbk)=0$ or $p\geq 5$.
By Lemma~\ref{lemmaVr0}, we deduce that:
\begin{itemize}
\item $W_8=\langle \mon^\alpha,\mon^\beta\rangle$, with $\alpha=(0,2,0)$, $\beta=(1,0,1)$,
$b_{\alpha_1,1}-b_{\beta_1,1}=-1$ and $V_8=\{0\}$;
\item $W_9=\langle \mon^\alpha,\mon^\beta\rangle$, with $\alpha=(0,1,1)$, $\beta=(3,0,0)$,
$b_{\alpha_1,1}-b_{\beta_1,1}=-3$ and $V_9=\{0\}$;
\item $W_{10}=\langle \mon^\alpha,\mon^\beta\rangle$, with $\alpha=(0,0,2)$, $\beta=(2,1,0)$,
$b_{\alpha_1,1}-b_{\beta_1,1}=-2$ and $V_{10}=\{0\}$;
\item $W_{11}=\langle \mon^\alpha,\mon^\beta\rangle$, with $\alpha=(1,2,0)$, $\beta=(2,0,1)$,
$b_{\alpha_1,1}-b_{\beta_1,1}=-1$ and $V_{11}=\{0\}$;
\item $W_{12}=\langle \mon^\alpha,\mon^\beta,\mon^{\gamma}\rangle$, with $\alpha=(0,3,0)$, $\beta=(1,1,1)$, 
$\gamma=(4,0,0)$ and \\
$(b_{\gamma_1,1}-b_{\beta_1,1},b_{\alpha_1,1}-b_{\gamma_1,1},b_{\beta_1,1}-b_{\alpha_1,1})=(3,-4,1)$.
\end{itemize}
Let us find $f_1\in P$, $f_1\neq 0$, with $f_1^{\sigma}\in V_{12}$. By Lemma~\ref{lemmaVr0},
$V_{12}\subseteq \langle 3y^3-4xyz+x^4\rangle$, so pick $f_1^{\sigma}=3y^3-4xyz+x^4$. Then,  $\rho(f_1^{\sigma})=6t^{74}+4t^{99}+t^{124}$. Now we want to construct the tail $f_1^{\tau}$.  As a general rule, we will try to find a $\sigma$-homogeneous tail whose monomials have exponents in $x$ as smallest as possible. 
By Lemma~\ref{lemmaVr0}, if $r\in\nums$, $r\neq 0$, and $\alpha\in\fac(r,\nums)$, then $\ord(\rho(\mon^\alpha))=2r$. Thus, pick $f_1^{\tau}\in W_{37}$, where $W_{37}$ is generated by the monomials
\begin{eqnarray*}
y^3z^5,xyz^6,y^8z,xy^6z^2,x^2y^4z^3,x^3y^2z^4,x^4z^5,x^3y^7,x^4y^5z,x^5y^3z^2,
x^6yz^3,x^7y^4,x^8y^2z,x^9z^2,x^{11}y.
\end{eqnarray*}
Note that the image by $\rho$ of the monomials  $y^{\alpha_2}z^{\alpha_3}\in W_{37}$ is just $t^{74}$. However, we need to get a $t^{99}$, which has an odd exponent. Thus, $f_1^{\tau}$ must contain at least a monomial multiple of $x$. On the other hand, to get the term $t^{124}$, we need a monomial multiple of $x^2$. Considering that $\rho(xy^6z^2)=t^{74}+t^{99}$ and
that $\rho(x^2y^4z^3)=t^{74}+2t^{99}+t^{124}$, we see that one can take $f_1^{\tau}=-3y^3z^5-2xy^6z^2-x^2y^4z^3$. Therefore, let
$f_1:=\underline{3y^3-4xyz+x^4}-3y^3z^5-2xy^6z^2-x^2y^4z^3$. Then, $f^{\sigma}_1=3y^3-4xyz+x^4\in W_{12}$, 
$f_1\in \ker(\rho)=P$, $f_1^{\sigma}\in V_{12}$ and $s=\min\{\sord(f)\mid f\in P,\; f\neq 0\}=12$. 

Suppose that $\charac(\mbk)=2$. From the previous discussion we deduce that
$V_r=\{0\}$, for $3\leq r\leq 9$. Let us find $g_1\in P$, $g_1\neq 0$, with $g_1^{\sigma}\in V_{10}$. 
As before, by Lemma~\ref{lemmaVr0}, $g_1^{\sigma}$ must be of the form $g_1^{\sigma}=z^2+x^2y$. Then, 
$\rho(g_1^{\sigma})=t^{70}$. Take the monomial $y^5z^3\in W_{35}$, which has no $x$ and whose image is 
$\rho(y^5z^3)=t^{70}$. Let $g_1:=\underline{z^2+x^2y}+y^5z^3$. Then, 
$g_1^{\sigma}=z^2+x^2y\in W_{10}$, $g_1 \in\ker(\rho)=P$, $g_1^{\sigma}\in V_{10}$ and 
$s=\min\{\sord(f)\mid f\in P,\; f\neq 0\}=10$.

Suppose that $\charac(\mbk)=3$. As before, we can affirm that $V_r=\{0\}$, for $3\leq r\leq 8$. 
Let us find $h_1\in P$, $h_1\neq 0$, with $h_1^{\sigma}\in V_{9}$. By Lemma~\ref{lemmaVr0}, 
$h_1^{\sigma}=yz-x^3$. Then, $\rho(h_1^{\sigma})=-t^{93}$. To vanish this term, the tail must contain a
monomial multiple of $x$, so we pick one of the form $xy^{\alpha_2}z^{\alpha_3}$. Then, $\rho(xy^{\alpha_2}z^{\alpha_3})=t^{6+8\alpha_2+10\alpha_3}+t^{31+8\alpha_2+10\alpha_3}$. 
A possible solution of $31+8\alpha_2+10\alpha_3=93$ is $(\alpha_2,\alpha_3)=(4,3)$. 
Note that $xy^4z^3\in W_{34}$ and
$\rho(xy^4z^3)=t^{68}+t^{93}$. To vanish $t^{68}$, we can choose $y^6z^2$, which is in $W_{34}$. So, 
let $h_1=\underline{yz-x^3}-y^6z^2+xy^4z^3\in\mbk\lser x,y,z\rser$. Then, 
$h_1^{\sigma}=yz-x^3\in W_{9}$, $h_1\in \ker(\rho)=P$, $h_1^{\sigma}\in V_{9}$ and 
$s=\min\{\sord(f)\mid f\in P,\; f\neq 0\}=9$.
\end{proof}

\begin{discussion}\label{discussion-0}
Suppose that $\charac(\mbk)=0$ or $p>5$. Let us find $f_1,f_2,f_3,f_4\in P$, $f_i\neq 0$, such that
$f_1^{\sigma}\in V_{12}$, $f_2^{\sigma}\in V_{13}$, $f_3^{\sigma}\in V_{14}$ and 
$f_4^{\sigma}\in V_{15}$. Observe that $s=\min\{\sord(f)\mid f\in P,\; f\neq 0\}=12$ and $\xi=3$. 
So, by Theorem~\ref{moh43-2}, we are interested in finding  elements whose $\sigma$-leading form are 
in $V_{12}$, $V_{13}$ and $V_{14}$. Moreover, by \cite[Theorem]{moh2}, we also have to consider $V_{15}$, 
where, for $n=3$, $n^2+2n=15$.

By Lemma~\ref{lemma-s}, we can pick $f_1=\underline{3y^3-4xyz+x^4}-3y^3z^5-2xy^6z^2-x^2y^4z^3$. 

By Lemma~\ref{lemmaVr0}, we can pick $f_2^{\sigma}=2y^2-3xz^2+x^3y\in V_{13}$. Then,
$\rho(f_2^{\sigma})=3t^{76}+t^{101}$. Let us find $f_2^{\tau}\in W_{38}$ with
monomials whose exponents in $x$ be as smallest as possible. To vanish $t^{101}$, take
$xy^{\alpha_2}z^{\alpha_3}$, so $\rho(xy^{\alpha_2}z^{\alpha_3})=t^{6+8\alpha_2+10\alpha_3}+t^{31+8\alpha_2+10\alpha_3}$. To obtain $31+8\alpha_2+10\alpha_3=101$, choose $(\alpha_2,\alpha_3)=(5,3)$. So,
$xy^5z^3\in W_{38}$ and $\rho(xy^5z^3)=t^{76}+t^{101}$. To vanish $t^{76}$, choose $y^7z^2\in W_{38}$. So, we can pick the polynomial $f_2=\underline{2y^2-3xz^2+x^3y}-2y^7z^2-xy^5z^3$. 

By Lemma~\ref{lemmaVr0}, we can pick $f_3^{\sigma}=yz^2-3x^2y^2+2x^3z\in V_{14}$. Then, $\rho(f_3^{\sigma})=3t^{78}+2t^{103}$. Let us find $f_3^{\tau}\in W_{39}$ with
monomials whose exponents in $x$ be as smallest as possible. To vanish $t^{103}$, take
$xy^{\alpha_2}z^{\alpha_3}$ and solve $31+8\alpha_2+10\alpha_3=103$. Thus, we can choose $(\alpha_2,\alpha_3)=(4,4)$. So, $xy^4z^4\in W_{39}$ and $\rho(xy^4z^4)=t^{78}+t^{103}$. To vanish $t^{78}$, choose $y^6z^3\in W_{39}$. 
So, we can pick the polynomial $f_3=\underline{yz^2-3x^2y^2+2x^3z}-y^6z^3-2xy^4z^4$. 

Finally, take $f_4^{\sigma}=az^3+bxy^3+cx^2yz\in V_{15}$, omitting
the $x^5$ for the sake of simplicity. Then,
\begin{eqnarray*}
\rho(f_4^{\sigma})=(a+b+c)t^{30}+(b+2c)t^{55}+ct^{80}.    
\end{eqnarray*}
Choose $(a,b,c)=(1,-2,1)$, so $f_4^{\sigma}=z^3-2xy^3+x^2yz$ and $\rho(f_4^{\sigma})=t^{80}$. To vanish $t^{80}$, choose $y^5z^4\in W_{40}$. Thus, we can take $f_4=\underline{z^3-2xy^3+x^2yz}-y^5z^4$. 

Note that the $\sigma$-leading forms $f_1^{\sigma}$, $f_2^{\sigma}$, $f_3^{\sigma}$, $f_4^{\sigma}$, coincide with the homonymous elements found in \cite[Exemple~4.4]{mss}, though in that example $\lambda=27$.
\end{discussion}

In the next theorem we prove that the polynomials $f_1,f_2,f_3,f_4$ form a minimal generating set of $P$ if $\charac(\mbk)=0$ or $p\geq 5$. The demonstration consists in proving that the ideal $I:=(f_1,f_2,f_3,f_4)$ is equal to $P$, where, by construction $I\subseteq P$. To prove the other inclusion we calculate the lengths of 
$R/(I+yR)$ and $R/(P+yR)$, where $R=\mbk\lser x,y,z\rser$, and see that they coincide. The advantage of this method is that it can be reproduced whatever the characteristic of $\mbk$ is. To not interfere with the development of the demonstration, we recall in Remark~\ref{rem-theoretical} some definitions and theoretical results.

\begin{theorem}\label{theo:char0}
If $\charac(\mbk)=0$ or $p\geq 5$, then $P$ is minimally generated by 
\begin{eqnarray*}
&&f_1=\underline{3y^3-4xyz+x^4}-3y^3z^5-2xy^6z^2-x^2y^4z^3,\phantom{+}
f_2=\underline{2y^2z-3xz^2+x^3y}-2y^7z^2-xy^5z^3,\\
&&f_3=\underline{yz^2-3x^2y^2+2x^3z}-y^6z^3-2xy^4z^4,\phantom{+}
f_4=\underline{z^3-2xy^3+x^2yz}-y^5z^4.
\end{eqnarray*}
\end{theorem}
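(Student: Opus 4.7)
The plan is to prove $I := (f_1, f_2, f_3, f_4) = P$ first and then derive minimality from Theorem~\ref{moh43}. By construction in Discussion~\ref{discussion-0}, each $f_i \in \ker(\rho) = P$, so $I \subseteq P$. For the reverse inclusion, I pass to the quotient by $y$ and compare colengths. Since $y \notin P$ (as $\rho(y) = t^8 \neq 0$), the element $y$ is a parameter on the one-dimensional complete local domain $A := R/P$, whose integral closure is $\bar A = \mbk\lser t\rser$, finite over $A$ because $A$ is analytically unramified. Consequently $\bar A/A$ has finite length, and applying the snake lemma to multiplication by $y$ on $0 \to A \to \bar A \to \bar A/A \to 0$ — noting that the induced endomorphism of the finite-length module $\bar A/A$ has equal kernel and cokernel lengths — yields
\begin{eqnarray*}
\ell(R/(P+yR)) \;=\; \ell(A/yA) \;=\; \ell(\bar A/y\bar A) \;=\; v_t(t^8) \;=\; 8.
\end{eqnarray*}
On the other hand, reducing each $f_i$ modulo $y$ kills every tail term, and in characteristic zero or $p \geq 5$ the scalars $-3$ and $2$ are units, so $I + yR = (x^4, xz^2, x^3 z, z^3) + yR$. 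The monomials of $\mbk\lser x, z\rser$ not divisible by any of $x^4, xz^2, x^3 z, z^3$ are exactly $\{1, x, z, x^2, xz, z^2, x^3, x^2z\}$, giving $\ell(R/(I + yR)) = 8$. The inclusion $I + yR \subseteq P + yR$ together with equal finite colengths forces $I + yR = P + yR$. For $p \in P$, writing $p = i + yr$ with $i \in I \subseteq P$ yields $yr \in P$; since $P$ is prime and $y \notin P$, $r \in P$. Hence $P = I + yP$, and Nakayama's lemma gives $P = I$.

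For minimality, the generating set of size four gives $\mu(P) \leq 4$. For the reverse bound, by Discussion~\ref{discussion-0} the $\sigma$-leading forms $f_1^\sigma, f_2^\sigma, f_3^\sigma$ are nonzero in $V_{12}, V_{13}, V_{14}$ respectively; taking $\msd_0 = \{f_1\}$, $\msd_1 = \{f_2\}$, $\msd_2 = \{f_3\}$ with $s = 12$ and $\xi = 3$, Theorem~\ref{moh43}(2) asserts that $\{f_1, f_2, f_3\}$ extends to a minimal generating set of $P$. If $\mu(P) \leq 3$, this extension must be trivial and $\{f_1, f_2, f_3\}$ itself minimally generates $P$, forcing $(f_1, f_2, f_3) + yR = P + yR$ of colength $8$. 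But modulo $y$ this ideal becomes $(x^4, xz^2, x^3 z) \subset \mbk\lser x, z\rser$, every generator of which is divisible by $x$; hence no power of $z$ lies in it, so the quotient has infinite length — a contradiction. Therefore $\mu(P) \geq 4$, whence $\mu(P) = 4$ and $\{f_1, f_2, f_3, f_4\}$ is a minimal generating set.

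The main technical point is the length identity $\ell(R/(P+yR)) = 8$. The snake-lemma route reduces everything to the standard finiteness of $\bar A/A$, valid whenever $A$ is a complete one-dimensional Noetherian local domain (hence analytically unramified with finite integral closure). A more pedestrian alternative is to compute the value semigroup $v(A)$ of $A = \mbk\lser t^6 + t^{31}, t^8, t^{10}\rser$ and verify directly that $|v(A) \setminus (8 + v(A))| = 8$; this avoids the abstract argument at the cost of a longer case analysis in which the twist $t^{31}$ prevents the naive semigroup ring computation from applying verbatim.
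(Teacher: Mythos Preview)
Your proof is correct and follows essentially the same architecture as the paper's: compare $\ell(R/(I+yR))$ and $\ell(R/(P+yR))$, get $8$ in both cases, and conclude $I=P$ via Nakayama; then derive $\mu(P)=4$ from Theorem~\ref{moh43} together with the observation that $(f_1,f_2,f_3)+yR$ has infinite colength. Two small remarks. First, where you compute $\ell(A/yA)=\ell(\bar A/y\bar A)$ by the snake lemma on $0\to A\to\bar A\to\bar A/A\to 0$, the paper instead invokes the multiplicity formula \cite[Corollary~4.6.11]{bh}; your route is a pleasant, self-contained alternative. Second, you assert $\bar A=\mbk\lser t\rser$ without justification, whereas the paper spends its Step~4 on exactly this point (showing $t\in K(A)$ via $t^{8}/t^{10}=t^{-2}$ and $t^{6}+t^{31}-(t^{2})^{3}=t^{31}$, hence $K(A)=\mbk\llaur t\rlaur$, and then that $\mbk\lser t\rser$ is finite over $A$); your closing paragraph flags the finiteness of $\bar A/A$ as the ``main technical point'' but the identification $\bar A=\mbk\lser t\rser$ is what actually needs the argument, since $A$ is not a monomial curve ring. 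This is easily repaired along the lines just indicated and does not affect the validity of your overall strategy.
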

\begin{proof}
Let $R=\mbk\lser x,y,z\rser$ and $\mfm=(x,y,z)$.
Observe that $R$ is a regular local ring with maximal ideal $\mfm$ 
(see, e.g., \cite[After Definition~2.2.1]{bh}).
In particular, $R$ is a Cohen-Macaulay local ring (see, e.g., \cite[Definitions~2.2.1 and 2.1.1, and Corollary~2.2.6]{bh}). Set $I=(f_1,f_2,f_3,f_4)$. 
By Discussion~\ref{discussion-0}, $I\subseteq P$. For the sake of easy legibility, 
we divide the proof in several steps.

\vspace*{0,3cm}

\noindent \underline{\sc Step $1$}: If $I=P$, then $\mu(P)=4$ and $f_1,f_2,f_3,f_4$ is a minimal generating set of $P$.

\vspace*{0,3cm}

Take in Theorem~\ref{moh43-2}, $Q=P$ and $\sigma:\mbk\lser\monx\rser\rightarrow\mbk\lser\monx\rser$ 
defined by $\sigma(x)=x^3$, $\sigma(y)=y^4$, $\sigma(z)=z^5$, so $\xi=\min(n_1,n_2,n_3)=3$. 
By Lemma~\ref{lemma-s}, $s=\min\{\sord(f)\mid f\in Q, f\neq 0\}=12$. By Discussion~\ref{discussion-0}, $f_1,f_2,f_3,f_4\in P$ and
$f_1^{\sigma}\in V_{12}$, $f_2^{\sigma}\in V_{13}$, $f_3^{\sigma}\in V_{14}$ and $f_4^{\sigma}\in V_{15}$.
By Theorem \ref{moh43-2}, it follows that $f_1,f_2,f_3$ is part of a minimal generating set of $P$, so 
$\mu(P)\geq 3$. Since $(f_1,f_2,f_3)\subset (x,y)$ and $f_4=z^3+g$, with $g\in (x,y)$, it follows that $f_4\not\in (f_1,f_2,f_3)$. If $\mu(P)=3$, then $(f_1,f_2,f_3)=P$ and, 
since $f_4\in P$, then $f_4\in (f_1,f_2,f_3)$, a contradiction. Therefore, $\mu(P)>3$.
But, if $I=P$, then $\mu(P)\leq 4$. So, $\mu(P)=4$ and $f_1,f_2,f_3,f_4$ is a minimal generating set of $P$.

\vspace*{0,3cm}

\noindent \underline{\sc Step $2$}: $\height(I),\height(P)=2$ and $\length_R(R/(I+yR)),\length_R(R/(P+yR))$ are finite. 

\vspace*{0,3cm}

Clearly, $(f_1,f_4,y)=(x^4,z^3,y)$ and $x^4,z^3,y$ is a regular sequence. 
By \cite[Corollary~1.6.19]{bh}, we get
$\grade(f_1,f_4,y)=\grade(x^4,z^3,y)=3$ and $f_1,f_4,y$ is a regular sequence.
In particular, since $P\subsetneq \mfm$, then $\height(P)<\height(\mfm)=3$. So,
$2\leq\grade(I)=\height(I)\leq \height(P)\leq 2$ and 
$\height(I)=\height(P)=2$.
Since $(f_1,f_4,y)\subset I+yR\subseteq P+yR$, it follows that
$3=\grade(f_1,f_4,y)\leq\grade(I+yR)\leq\grade(P+yR)\leq \dim R=3$. Therefore, 
$\height(I+yR)=3$, $\height(P+yR)=3$ and $R/(I+yR)$, $R/(P+yR)$ are Artinian rings. Thus, 
$R/(I+yR)$ and $R/(P+yR)$ have finite length as $R$-modules. 

\vspace*{0,3cm}

\noindent \underline{\sc Step $3$}: $\length_R(R/(I+yR))=8$.

\vspace*{0,3cm}

Set $J=I+yR=(x^4,xz^2,x^3z,z^3,y)$. Let $S=R/yR$, which is a local ring with maximal ideal $\mfm_S=\mfm/yR$, say, with residue field $S/\mfm_S=\mbk$. By abuse of notation, we still denote $x,z$ to the classes of the variables in $S$. Let $\overline{J}=J/yR=(x^4,xz^2,x^3z,z^3)$. Since $R\to S$ is a surjective map and $S/\overline{J}=R/J$ is an $S$-module, it follows
that $\length_R(R/(I+yR))=\length_R(R/J)=\length_R(S/\overline{J})=\length_S(S/\overline{J})$. 
Observe that $\overline{J}\subset \mfm_S^3$ and consider the short exact sequences: 
\begin{multline*}
0\to \mfm_S/\overline{J}\to S/\overline{J}
\to S/\mfm_S\to 0,\;\\
0\to \mfm_S^{2}/\overline{J}\to\mfm_S^{}/\overline{J}
\to\mfm_S/\mfm_S^2\to 0,\;\\
0\to \mfm_S^{3}/\overline{J}\to\mfm_S^2/\overline{J}
\to\mfm_S^2/\mfm_S^3\to 0.
\end{multline*}
Using the additivity of length and that the length of a vector space coincides with its dimension
(see, e.g., \cite[Propositions~6.9 and 6.10]{am}), 
\begin{multline*}
\length_{S}(S/\overline{J})=\length_S(\mfm_S/\overline{J})+\length_S(S/\mfm_S)=
\length_S(\mfm_S^{2}/\overline{J})+\length_S(\mfm_S/\mfm_S^2)+1=\\
\length_S(\mfm_S^{3}/\overline{J})+\length_S(\mfm_S^2/\mfm_S^3)+2+1=
\length_S(\mfm_S^3/\overline{J})+6. 
\end{multline*}

Since $\mfm_S(\mfm_S^3/\overline{J})=0$, it follows that $\mfm_S^{3}/\overline{J}=\langle x^3,x^2z\rangle$ is a two-dimensional $S/\mfm_S$-vectorial space, so $\length_S(\mfm_S^{3}/\overline{J})=\length_{\Bbbk}(\mfm_S^{3}/\overline{J})=\dim_{\Bbbk}(\mfm_S^{3}/\overline{J})=2$. Thus, $\length_S(S/\overline{J})=8$.

\vspace*{0,3cm}

\noindent \underline{\sc Step $4$}: $D:=R/P=\mbk\lser t^6+t^{31},t^8,t^{10}\rser$ is a one-dimensional 
Cohen-Macaulay local domain. 

\vspace*{0,3cm}

Since $R$ is Cohen-Macaulay and local, $\dim(D)=\dim(R/P)=\dim(R)-\height(P)=1$ (see, e.g., \cite[Proposition~2.1.4]{bh}). Since $R$ is Noetherian local and $P$ is a prime ideal of $R$, then $D=R/P$ is a Noetherian local domain. In particular, any non-zero and non-invertible element of $D$ 
is a $D$-regular sequence, so  
$1\leq\grade(\mfm_D,D)=\depth(D)\leq\dim(D)=1$ and $D$ is a Cohen-Macaulay ring
(see, e.g., \cite[Definition~1.2.7, Proposition~1.2.12 and Definition~2.1.1]{bh}).
Let $\mfm_D=\mfm/P$ be its maximal ideal and $k_D=D/\mfm_D=R/\mfm=\mbk$ its residue field. By abuse of notation, let $x,y,z$ still denote the classes of the variables in $D$, so $\mfm_D=(x,y,z)$. 
Note that $D=R/P=\mbk\lser x,y,z\rser/\ker(\rho)\cong \im(\rho)=\mbk\lser t^6+t^{31},t^8,t^{10}\rser$. 
Through this isomorphism, $x$, $y$ and $z$ are identified with $t^6+t^{31}$, $t^8$ and $t^{10}$, respectively. 

\vspace*{0,3cm}

\noindent \underline{\sc Step $5$}: $D[t]=\mbk\lser t\rser$.

\vspace*{0,3cm}

Clearly, $D[t]=\mbk\lser t^6+t^{31},t^8,t^{10}\rser[t]$ is a subring of $\mbk\lser t\rser$. Let us prove the equality. To do that, consider the numerical semigroup $S=\langle 4,5\rangle$, whose Frobenius number is $F(S)=11$ (see, e.g., \cite[Theorem~2.1.1]{ramirez}).
Let $M$ be the subsemigroup $M=2S=\{2s\mid s\in S\}=\langle 8,10\rangle$. Clearly, 
the last even element not in $M$ is $2F(S)=22$. 

Let $f\in\mbk\lser t\rser$. Write $f=\varphi_1+\varphi_2+\varphi_3$, where
$\varphi_1:=\sum_{i=0}^{54}\lambda_it^{i}$, $\varphi_2:=\sum_{i\geq 0}\lambda_{55+2i}t^{55+2i}$ and 
$\varphi_3:=\sum_{i\geq 0}\lambda_{56+2i}t^{56+2i}$. Clearly $\varphi_1\in\mbk[t]\subset D[t]$. 
Note that $\varphi_2=\sum_{i\geq0}\lambda_{55+2i}((t^6+t^{31})t^{24+2i}-t^{30+2i})$. 
Since $22$ is the last even number not in $\langle 8,10\rangle$, then, for each $i\geq 0$, 
both $t^{24+2i}$ and $t^{30+2i}$ are in $\mbk\lser t^8,t^{10}\rser\subset D$. Thus, 
$(t^6+t^{31})t^{24+2i}-t^{30+2i}\in\mbk\lser t^6+t^{31},t^8,t^{10}\rser$ and 
$\varphi_2\in D$. Again, since $22$ is the last even number not in $\langle 8,10\rangle$, then, for each $i\geq 0$, $t^{56+2i}\in \mbk\lser t^8,t^{10}\rser\subset D$. Therefore, $\varphi_3\in D$. So, $f\in D[t]$ and $D[t]=\mbk\lser t\rser$. 

\vspace*{0,3cm}

\noindent \underline{\sc Step $6$:} $K(D)=K(\mbk\lser t\rser)$, 
where $K(B)$ denotes the field of fractions of a domain $B$. 

\vspace*{0,3cm}

Since $D=\mbk\lser t^6+t^{31},t^8,t^{10}\rser\subset
\mbk\lser t\rser$, then $K(D)\subseteq K(\mbk\lser t\rser)$.
To see the other inclusion, take $g\in K(\mbk\lser t\rser)$. 
By {\sc Step 5}, $\mbk\lser t\rser=D[t]$. Thus, 
$g\in K(\mbk\lser t\rser)=K(D[t])$ and
$g=p(t)/q(t)$, where $p(t),q(t)\in D[t]$, $q(t)\neq 0$.
Since $t^8$ and $t^{10}$ are in $D$ and $t^8t^{-10}=t^{-2}$, 
then $t^{-2}$, and $t^2$, are in $K(D)$. So,
$(t^6+t^{31}-(t^{2})^3)(t^{-2})^{15}=t\in K(D)$. 
Since $K(D)$ is a field that contains $t$ and $D$, it follows that $K(D)$
contains too any quotient of polynomial expressions in $t$ with coefficients in $D$, thus
$g=p(t)/q(t)\in K(D)$.

\vspace*{0,3cm}

\noindent \underline{\sc Step $7$}: The integral closure of $D=\mbk\lser t^6+t^{31},t^8,t^{10}\rser$
in its field of fractions is $\mbk\lser t\rser$.

\vspace*{0,3cm}

The element $t$ is a root of the monic polynomial $T^8-t^8\in D[T]$.
Thus, $t$ is integral over $D$ and $D\subset D[t]$ is an integral extension 
(see, e.g., \cite[Proposition~5.1, $(iii)\Rightarrow (i)$]{am}). 
Recall that $\mbk\lser t\rser$ is a regular local ring, in particular, 
it is integrally closed in its field of fractions (see, e.g., 
\cite[Corollary~2.2.20 and the definition before]{bh}). 
Since $D\subset D[t]=\mbk\lser t\rser$ is an integral extension, 
$\mbk\lser t\rser$ is integrally closed in $K((\mbk\lser t\rser)$
and $K(D)=K(\mbk\lser t\rser)$, then the integral closure of $D$ 
is $\mbk\lser t\rser$ (see Remark~\ref{rem-theoretical}, {\sc Step 7}). 

\vspace*{0,3cm}

\noindent \underline{\sc Step $8$}: If $V=\mbk\lser t\rser$, then
$\length_R(R/(P+yR))=\length_D(D/yD)=\length_D(V/yV)$. 

\vspace*{0,3cm}

Since $R\to D=R/P$ is a surjective map and $D/yD=R/(P+yR)$ is a $D$-module, it follows that
$\length_R(R/(P+yR))=\length_R(D/yD)=\length_D(D/yD)$ (see Remark~\ref{rem-theoretical}, {\sc Step 3}).

Since $y$ is a $D$-regular sequence in $\mfm_D$
and $D$ is a one-dimensional Cohen-Macaulay local ring, 
then $y$ is a system of parameters of $D$ 
(see, e.g., \cite[\S14]{matsumura} and \cite[Theorem~2.1.2]{bh}). 


Let $V=\mbk\lser t\rser$. Since $V$ is a one-dimensional regular local ring, then 
$V$ is a discrete valuation ring, DVR for short (see, e.g., \cite[Theorem~11.2]{matsumura}). 
Set $\mfm_V=(t)$ its maximal ideal and $k_V=\mbk$ its residue field. 

We have seen that $D\subset D[t]=\mbk\lser t\rser=V$, where $t$ is integral over $D$. 
Thus, $V$ is a finitely generated $D$-module (see, e.g., \cite[Proposition~5.1]{am}). 
Since $t^8$ is a $V$-regular sequence in $\mfm_D$, then
$1\leq \grade(\mfm_D,V)=\depth_D(V)\leq \dim V=1$ and $V$ is a finitely generated 
Cohen-Macaulay $D$-module 
(see, e.g., \cite[Definition~1.2.7, Proposition~1.2.12 and Definition~2.1.1]{bh}).

Note that $K(D)=(D\setminus\{0\})^{-1}D\subseteq (D\setminus\{0\})^{-1}V
\subseteq (D\setminus\{0\})^{-1}K(D)=K(D)$. Therefore, 
$V\otimes_DK(D)=(D\setminus\{0\})^{-1}V=K(D)$.
So, $\rank_D(V)=1$ (see, e.g., \cite[Definition~1.4.2]{bh}). 
Using that $D$ is a Cohen-Macaulay local ring, that $V$ is a finitely generated
Cohen-Macaulay $D$-module of $\rank_D(V)=1$ and that
$y$ is a system of parameters of $D$, we conclude that 
$\length_D(V/yV)=\length_D(D/yD)\cdot \rank_D(V)=\length_D(D/yD)$
(see, e.g., \cite[Corollary~4.6.11]{bh}).

\vspace*{0,3cm}

\noindent \underline{\sc Step $9$}: $\length_D(V/yV)=\length_V(V/yV)=8$.

\vspace*{0,3cm}

Let $0=M_8\subset M_7\subset\ldots\subset M_{1}\subset M_0=V/yV$ be the chain defined by 
$M_i=\mfm_V^{i}/\mfm_V^{8}$, $i=0,\ldots,8$, where each $M_i$ is both a $V$-module and a $D$-module. 
Note that $M_0=V/t^8V=V/yV$. Each consecutive quotient satisfies 
$M_i/M_{i+1}=\mfm_V^i/\mfm_V^{i+1}\cong k_V=k_D$, which is both a simple 
$V$-module and a simple $D$-module (see Remark~\ref{rem-theoretical}, {\sc Step 2}). 
Thus, this chain is a series of composition of $V/yV$, 
considered as a $V$-module and considered as a $D$-module. It follows that $\length_D(V/yV)=\length_V(V/yV)=8$. In particular, by {\sc Steps} 8 and 9, 
$\length_R(R/(P+yR))=8$. 

\vspace*{0,3cm}

\noindent \underline{\sc Step 10: Conclusion}. 

\vspace*{0,3cm}

Since $\rho(y)=t^8$, then $y\not\in P$, $P\cap yR=yP$ and 
$\text{Tor}_1(R/P,R/yR)=(P\cap yR)/yP=0$. Consider the short exact sequence, $0\rightarrow P/I\rightarrow R/I\rightarrow R/P\rightarrow 0$. On tensoring by $R/yR$ we obtain the exact sequence 
\begin{eqnarray*}
\cdots\rightarrow\underbrace{\text{Tor}_1(R/P,R/yR)}_{=0}\rightarrow (P/I)/y(P/I)\rightarrow R/(I+yR)\rightarrow R/(P+yR)\rightarrow 0.
\end{eqnarray*}
By {\sc Steps} 3, 8 and 9, $\length_R(R/(I+yR))=\length_R(R/(P+yR))=8$.
By the additivity of the length, we get $\length_R((P/I)/y(P/I))=0$ and $P/I=y(P/I)$. 
By Nakayama's Lemma, $I=P$.
 \end{proof}

\begin{remark}\label{rem-theoretical}
Let us recall some definitions and results appearing in the proof of Theorem~\ref{theo:char0}.

\vspace*{0,2cm}

\noindent \underline{\sc Step $1$}. 

\vspace*{0,2cm}

\noindent Here we are implicitly using that, in a Noetherian local ring, any generating set of an ideal can be reduced to a minimal generating set and that all the minimal generating sets of an ideal have the same cardinality (see, e.g., \cite[Theorem~2.3]{matsumura}).

\vspace*{0,2cm}

\noindent \underline{\sc Step $2$}. 

\vspace*{0,2cm}

\noindent Remember that the $\grade(\mfa)$ of an ideal $\mfa$ 
is the length of a maximal regular sequence in $\mfa$. One has $\grade(\mfa)\leq\height(\mfa)$ (see, e.g., \cite[Definition~1.2.6 and Proposition~1.2.14]{bh}). If
$R$ is a Noetherian Cohen-Macaulay ring and if $\mfa\neq R$, then $\grade(\mfa)=\height(\mfa)$ and 
$\height(I)+\dim R/I=\dim R$ (see, e.g., \cite[Definition~2.1.1 and Proposition~2.1.4]{bh}).
Suppose that $\height(\mfa)=\dim R$. Then $\dim R/\mfa=0$. In such a case, $R/\mfa$ is a Noetherian ring of dimension zero, thus, an Artinian ring (see, e.g., \cite[Theorem~8.5]{am}). In particular, 
$R/\mfa$ satisfies both the ascending chain condition and the descending chain condition. Thus, $R/\mfa$ has a composition series. Recall that a composition series of an $R$-module $M$ is
a chain of submodules $M=M_0\supsetneq M_1\supsetneq\ldots\supsetneq M_n=(0)$, where 
each $M_i/M_{i+1}$ is a simple $R$-module, i.e. it has no proper submodules. The number of containments $n$ is denoted $\length_R(M)$ and called the length of $M$ (see, e.g. \cite[Definition before Proposition~6.7 and Proposition~6.8]{am}). If $M$ is a simple $R$-module, then $M$ is principal, for, 
if $x\in M$, $x\neq 0$, the kernel of $f:M\to M/(x)$ contains $x$, thus $\ker(f)=M$ and $M=(x)$. In particular, if $M\neq 0$, and $(R,\mfm,k)$ is local, then $g:R\to M$, $g(1)=x$, is an epimorphism, $R/\ker(g)\cong M$ and, since $M$ is simple, $\ker(g)=\mfm$ and $M\cong R/\mfm=k$.

\vspace*{0,2cm}

\noindent \underline{\sc Step $3$}. 

\vspace*{0,2cm}

\noindent If $R\to S$ is a surjective map of rings, then $S\cong R/\mfa$, for some ideal $\mfa$ of $R$. 
Fix an $S$-module $M$. By restriction of scalars, $M$ is an $R$-module. In particular, $\mfa\cdot M=0$.
Let $N$ be a $S$-submodule of $M$. As before, by restriction of scalars,
$N$ is an $R$-module. Suppose that $P$ is an $R$-submodule of $M$. 
Then, $\mfa\cdot P\subseteq\mfa\cdot M=0$ and $P$ is also an $S=R/\mfa$-module.
Therefore, the set of $R$-submodules of $M$ and the set of $S$-submodules of $M$ coincide, leading to
the equality $\length_R(M)=\length_S(M)$.

\vspace*{0,2cm}

\noindent \underline{\sc Step $5$}. 

\vspace*{0,2cm}

\noindent The Frobenius number $\frob(\mathcal{T})$ of a numerical semigroup $\mathcal{T}$ is the largest integer that does not belong to $\mathcal{T}$. If $\mathcal{T}=\langle a,b\rangle$, with $\gcd(a,b)=1$, then it is well-known that $\frob(\mathcal{T})=ab-a-b$ 
(see, e.g., \cite[Theorem~2.1.1]{ramirez}).

\vspace*{0,2cm}

\noindent \underline{\sc Step $6$}. 

\vspace*{0,2cm}

\noindent Recall that $K(\mbk\lser t\rser)=
\mbk\llaur t\rlaur:=\{\sum_{n=-r}^{\infty}\lambda_nt^n\mid \lambda_n\in\mbk, r\in\mbn\}$, the
ring of formal Laurent series over $\mbk$, though we do not used it explicitly. 
Indeed, any $f\in\mbk\lser t\rser$, $f\neq 0$, with $\ord(f)=r$, 
can be written as $f=t^r\cdot g$, where $r=\ord(f)$ and
$g$ is invertible in $\mbk\lser t\rser$. Then, the inverse of $f$ in its field of fractions is $f^{-1}=t^{-r}\cdot g^{-1}\in\mbk\lser t\rser[t^{-1}]=\mbk\llaur t\rlaur$. This shows 
$K(\mbk\lser t\rser)\subseteq \mbk\llaur t\rlaur$. Conversely, if $f\in\mbk\llaur t\rlaur$, $f\neq 0$, then
$f$ can be written as $f=g/t^r$, where $r\in\mbn$ and $g\in\mbk\lser t\rser$. Thus, $f$ is the quotient of
two series and so $f\in K(\mbk\lser t\rser)$.

\vspace*{0,2cm}

\noindent \underline{\sc Step $7$}. 

\vspace*{0,2cm}

\noindent Suppose that $A\subset B$ is an integral extension of 
integral domains $A$ and $B$ whose fields of fractions coincide, i.e., $K(A)=K(B)$. Let 
$A'=\{f\in K(A)\mid f\mbox{ is integral over }A\}$ be
the integral closure of $A$ in $K(A)$. If $B$ is integrally closed in $K(B)$, then $A'=B$.
Indeed, since $A\subset B$ is an integral extension, where $B\subset K(B)=K(A)$, then $B\subseteq A'$. Conversely, if $f\in A'$, then $f$ is integral over $A$ and, by definition, $f$ is also integral over $B$. 
Since $B$ is integrally closed in $K(B)$, then $f\in B$. 

\vspace*{0,2cm}

\end{remark}

\begin{remark}\label{remark-sally1} {\sc Sally's diagram (I).}
The following diagram, with natural vertical morphisms, is considered in the book of Sally \cite[p.~ 61]{sally}. 
(The down-to-up vertical morphisms come from $\mbz_{(p)}\to\mbz_{(p)}/p\mbz_{(p)}=\mbz/p\mbz$.)
Now, let
$\tilde{\rho}(x)=t^6+t^{31}$, $\tilde{\rho}(y)=t^8$ and $\tilde{\rho}(z)=t^{10}$, 
and suppose that $\overline{\rho}$ and $\rho$ are defined in an analogous way. 
\begin{center}
\begin{tikzcd}[row sep = huge, column sep = huge]
(\mbz/p\mbz)\lser x,y,z\rser \arrow[r, "\overline{\rho}"]  
&(\mbz/p\mbz)\lser t\rser 
\\
\mbz_{(p)}\lser x,y,z\rser 
\arrow[r, "\tilde{\rho}"] \arrow[d, "", swap] \arrow[u, "", swap]                            
&\mbz_{(p)}\lser t\rser \arrow[d, "\iota"] \arrow[u, "", swap] \\
\mbq\lser x,y,z\rser \arrow[r, "\rho", swap] 
&\mbq\lser t\rser                            
\end{tikzcd}
\end{center}
Let $P=\ker(\rho)$, $\tilde{P}=\ker(\tilde{\rho})$ and $\overline{P}=\ker(\overline{\rho})$. 
Theorem~\ref{theo:char0} says that $P$ is minimally generated by the polynomials 
\begin{eqnarray*}
&&f_1=\underline{3y^3-4xyz+x^4}-3y^3z^5-2xy^6z^2-x^2y^4z^3,\phantom{+}
f_2=\underline{2y^2z-3xz^2+x^3y}-2y^7z^2-xy^5z^3,\\
&&f_3=\underline{yz^2-3x^2y^2+2x^3z}-y^6z^3-2xy^4z^4,\phantom{+}
f_4=\underline{z^3-2xy^3+x^2yz}-y^5z^4.
\end{eqnarray*}
Since the $f$'s have integer coefficients, they can also be considered in $\mbz_{(p)}\lser x,y,z\rser$ and clearly belong to $\tilde{P}$ because $\iota$ is injective. Therefore, the classes 
$\overline{f_i}$ of the $f_i$ in $(\mbz/p\mbz)\lser x,y,z\rser$ belong to $\overline{P}$.
If $p=2$, then
\begin{eqnarray*}
&&\overline{f_1}=\underline{y^3+x^4}+y^3z^5+x^2y^4z^3,\phantom{+}
\overline{f_2}=\underline{xz^2+x^3y}+xy^5z^3,\\
&&\overline{f_3}=\underline{yz^2+x^2y^2}+y^6z^3,\phantom{+}
\overline{f_4}=\underline{z^3+x^2yz}+y^5z^4.
\end{eqnarray*}
If $p=3$, then
\begin{eqnarray*}
&&\overline{f_1}=\underline{-xyz+x^4}+xy^6z^2-x^2y^4z^3,\phantom{+}
\overline{f_2}=\underline{-y^2z+x^3y}+y^7z^2-xy^5z^3,\\
&&\overline{f_3}=\underline{yz^2-x^3z}-y^6z^3+xy^4z^4,\phantom{+}
\overline{f_4}=\underline{z^3+xy^3+x^2yz}-y^5z^4.
\end{eqnarray*}
\end{remark}

\begin{discussion}\label{discussion-2}
Suppose that $\charac(\mbk)=2$. Let us find $g_1,g_2\in P$, $g_i\neq 0$, such that
$g_1^{\sigma}\in V_{10}$ and $g_2^{\sigma}\in V_{12}$.  Observe that now 
$s=\min\{\sord(f)\mid f\in P,\; f\neq 0\}=10$ and $\xi=3$. Theorem~\ref{moh43-2} says that, in order to find a minimal generating set of $P$, one can 
search for linearly independent subsets of $V_{10}$, $V_{11}$ and $V_{12}$. 
However, $W_{11}=\langle xy^2,x^2z\rangle=\langle \mon^{\alpha},\mon^{\beta}\rangle$, with $\alpha=(1,2,0)$ 
and $\beta=(2,0,1)$, and where $b_{\alpha_1,1}=b_{1,1}=1$ and $b_{\beta_1,1}=b_{2,1}=0$. Thus, by Lemma~\ref{lemmaVr0}, $V_{11}=\{0\}$.

By Lemma~\ref{lemma-s}, we can pick $g_1=\underline{z^2+x^2y}+y^5z^3$. With the notations as in 
Remark~\ref{remark-sally1}, $\overline{f_2}=xg_1$, $\overline{f_3}=yg_1$, $\overline{f_4}=zg_1$. Moreover, 
let $g_2$ be chosen as the polynomial corresponding to the element $\overline{f_1}$ in Remark~\ref{remark-sally1}.
Concretely, let $g_2=\underline{y^3+x^4}+y^3z^5+x^2y^4z^3$. Then, $g_2^{\sigma}\in V_{12}$, $g_2\in P$ and $\overline{f_1}=g_2$. 
\end{discussion}

The proof of the next theorem is very similar to the proof of Theorem~\ref{theo:char0}. Consequentely, in each step, we only give the demonstration of those details which differ from Theorem~\ref{theo:char0}. We maintain the same notations unless stated otherwise.

\begin{theorem}\label{theo:char2}
If $\charac(\mbk)=2$, then $P$ is minimally generated by 
\begin{eqnarray*}
g_1=\underline{z^2+x^2y}+y^5z^3,\phantom{+}g_2=\underline{y^3+x^4}+y^3z^5+x^2y^4z^3.
\end{eqnarray*} 
\end{theorem}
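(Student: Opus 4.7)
The plan is to follow the template of the proof of Theorem~\ref{theo:char0} almost verbatim, since its structural ingredients are all characteristic-independent. Writing $I:=(g_1,g_2)$, the inclusion $I\subseteq P$ is already supplied by Discussion~\ref{discussion-2}. I will first establish $\mu(P)\geq 2$, then force $I=P$ by comparing $\length_R(R/(I+yR))$ with $\length_R(R/(P+yR))$, and conclude via Nakayama's lemma.

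For the lower bound, $g_1^\sigma\in V_{10}\setminus\{0\}$ and $g_2^\sigma\in V_{12}\setminus\{0\}$ by Lemma~\ref{lemma-s} and Discussion~\ref{discussion-2}, so Theorem~\ref{moh43}(2) applied with $s=10$, $\xi=3$, $\msd_0=\{g_1\}$, $\msd_1=\emptyset$, $\msd_2=\{g_2\}$ extends $\{g_1,g_2\}$ to a minimal generating set of $P$, giving $\mu(P)\geq 2$. The genuinely new input in characteristic $2$ is that reduction modulo $y$ strips every tail of $g_1,g_2$: from $g_1=z^2+y(x^2+y^4z^3)$ and $g_2=x^4+y^3(1+z^5)+x^2y^4z^3$ one gets the exact equality of ideals $(g_1,g_2,y)=(z^2,x^4,y)$. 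Since $z^2,x^4,y$ is a regular sequence in the Cohen--Macaulay ring $R$, so is $g_1,g_2,y$ and hence $g_1,g_2$, which yields $\height(I)=\height(P)=2$ (using $I\subseteq P$), Artinianity of $R/(I+yR)$, and the length computation
\begin{eqnarray*}
\length_R(R/(I+yR))=\length_{\mbk\lser x,z\rser}\bigl(\mbk\lser x,z\rser/(x^4,z^2)\bigr)=8.
\end{eqnarray*}

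For $\length_R(R/(P+yR))$ I would reuse Steps~4 and 5 of the proof of Theorem~\ref{theo:char0} verbatim: the integral closure argument identifying $V=\mbk\lser t\rser$ as the normalization of $D=R/P\cong\mbk\lser t^6+t^{31},t^8,t^{10}\rser$ never appeals to the characteristic, and the rank-one reduction gives $\length_R(R/(P+yR))=\length_V(V/yV)=\nu(t^8)=8$. With both lengths equal to $8$, the short exact sequence
\begin{eqnarray*}
0\to (P/I)/y(P/I)\to R/(I+yR)\to R/(P+yR)\to 0,
\end{eqnarray*}
obtained from $0\to P/I\to R/I\to R/P\to 0$ on tensoring with $R/yR$ (using $y\notin P$, whence $\mathrm{Tor}_1^R(R/P,R/yR)=0$), forces $P/I=y(P/I)$, and Nakayama gives $P=I$. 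Together with the lower bound, $\mu(P)=2$. The only obstacle is essentially bookkeeping: one must double-check that every ``generic'' step imported from the proof of Theorem~\ref{theo:char0}, especially the integral closure computation and the Cohen--Macaulay rank-one identity $\length_D(V/yV)=\length_V(V/yV)$, is indifferent to $\charac(\mbk)=2$, which it is.
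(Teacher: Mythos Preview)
Your proposal is correct and follows essentially the same approach as the paper's proof: invoke Theorem~\ref{moh43} for $\mu(P)\geq 2$, reduce modulo $y$ to get $(g_1,g_2,y)=(z^2,x^4,y)$, compute both lengths as $8$, and conclude via Nakayama. The only cosmetic difference is that you compute $\length_{\mbk\lser x,z\rser}(\mbk\lser x,z\rser/(x^4,z^2))=8$ directly from the monomial basis, whereas the paper builds a short filtration; and in Step~5 you should note (as the paper does) that it is $g_1=g_2=0$ in $D$, rather than $f_1=f_4=0$, that gives $z^2,x^4\in yD$ and hence that $y$ is a system of parameters---a trivial adaptation.
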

\begin{proof}
Set $I=(g_1,g_2)$. By Discussion~\ref{discussion-2}, $I\subseteq P$. 

{\sc Step 1}. By Lemma~\ref{lemma-s}, $s=10$. By
Theorem \ref{moh43-2}, since $\xi=3$, it follows that $g_1,g_2$ is part of a minimal system of generators of $P$, so $\mu(P)\geq 2$. But, if $I=P$, then $\mu(P)\leq 2$. So, $\mu(P)=2$ and $g_1,g_2$ is a minimal generating set of $P$.

{\sc Step 2}. Observe that $(g_1,g_2,y)=(z^2,x^4,y)$ and $z^2,x^4,y$ is a regular sequence. The rest follows as in Theorem~\ref{theo:char0}.

{\sc Step 3}. Note that $J=I+yR=(z^2,x^4,y)$ and $\overline{J}=J/yR=(z^2,x^4)$. 
Consider the exact sequences $0\rightarrow \mfm_S/\overline{J}\rightarrow S/\overline{J}\rightarrow 
S/\mfm_S\rightarrow 0$ and 
$0\rightarrow \mfm_S^{2}/\overline{J}\rightarrow\mfm_S/\overline{J}\rightarrow\mfm_S/\mfm_S^2\rightarrow 0$.
By the additivity of length, $\length_{S}(S/\overline{J})=3+\length_S(\mfm_S^2/\overline{J})$. Let $H$ be the ideal of $S$ generated by $q_1=x^3z$, $q_2=x^3$, $q_3=x^2z$, $q_4=x^2$ and $q_5=xz$. Let
\begin{eqnarray*}
\overline{J}_0=\overline{J}\mbox{ , }
\overline{J}_i=\overline{J}_{i-1}+(q_i), 
\text{ for } 1\leq i\leq 5.
\end{eqnarray*}
Then, $\overline{J}_5=\overline{J}+H=\mfm_S^2$. Since $\mfm_S
\overline{J}_i\subseteq \overline{J}_{i-1}$, it follows that, for $i=1,\ldots,5$,
$\overline{J}_i/\overline{J}_{i-1}=(\overline{q}_i)$ is a
one-dimensional vector space, so a simple $S$-module. For $i=1,\ldots,5$, consider the following exact sequences of $S$-modules:
\begin{eqnarray*}
0\rightarrow \overline{J}_{i-1}/\overline{J}\rightarrow
\overline{J}_{i}/\overline{J}\rightarrow\overline{J}_i/\overline{J}_{i-1}\rightarrow 0.
\end{eqnarray*}
Hence,  $\length_S(\mfm_S^2/\overline{J})=\sum_{i=1}^{5}\length_S(\overline{J}_{i}/\overline{J}_{i-1})=5$. 
Thus, $\length_S(S/yS)=8$.

{\sc Steps} 4 to 10 work as in Theorem~\ref{theo:char0}. 
\end{proof}

\begin{remark}\label{remark-sally2} {\sc Sally's diagram (II).}
Theorem~\ref{theo:char2} says that $\overline{P}$ is minimally generated by the polynomials $g_1,g_2$. 
Moreover, we have shown that, $\overline{f_1}=g_2$, 
$\overline{f_2}=xg_{1}$, $\overline{f_3}=yg_1$ and $\overline{f_4}=zg_1$. 
Note that $g_1$ is in 
\begin{eqnarray*}
(\mbz/2\mbz)\lser x,y,z\rser=(\mbz_{(2)}/(2)\mbz_{(2)})\lser x,y,z\rser
=\mbz_{(2)}\lser x,y,z\rser/2\mbz_{(2)}\lser x,y,z\rser
\end{eqnarray*}
(see, e.g., \cite[Page~5]{matsumura}).
A representative of $g_1$ in $\mbz_{(2)}\lser x,y,z\rser$ is $\varphi=z^2+x^2y+y^5z^3$.
Let us prove that  $\varphi$ is not in the ideal 
$\tilde{P}+2\mbz_{(2)}\lser x,y,z\rser$, so, in particular, 
$g\not\in (\tilde{P}+2\mbz_{(2)}\lser x,y,z\rser)/(2\mbz_{(2)}\lser x,y,z\rser)$.
Suppose that $\varphi\in \tilde{P}+2\mbz_{(2)}\lser x,y,z\rser$. Then,
$\varphi=f+2g$, where $\tilde{\rho}(f)=0$ and 
$g\in\mbz_{(2)}\lser x,y,z\rser$. Thus,
$0=-\tilde{\rho}(f)=2\tilde{\rho}(g)-\tilde{\rho}(\varphi)=
2(\tilde{\rho}(g)-(t^{20}+t^{45}+t^{70}))$. Since 
$\mbz_{(2)}\lser t\rser$ is a domain, it follows that 
$\tilde{\rho}(g)=t^{20}+t^{45}+t^{70}$. However, 
such a $g$ does not exist. Indeed, write $g=\sum_{s\geq r}\left(\sum_{\alpha\in {\rm F}(s,\mathcal{S})}\lambda_{\alpha}\mon^{\alpha}\right)$, 
$\lambda_{\alpha}\in\mbz_{(2)}$, and $\sord(g)=r$.
Although we are working with
$\tilde{\rho}:\mbz_{(2)}\lser x,y,z\rser\to \mbz_{(2)}\lser t\rser$, 
so the coefficients are not in a field $\mbk$, but in $\mbz_{(2)}$, 
some of the former results still hold, like for instance the
equality \eqref{eq-rhoh} in Lemma~\ref{lemmaVr0}. Therefore, 
\begin{multline*}\label{eq-rhotitlla}
\tilde{\rho}(g)=\sum_{s\geq r}\left[
\sum_{k=0}^{\nu_s}\left(\sum_{\alpha\in{\rm F}(s,\mathcal{S})}b_{\alpha_1,k}\lambda_{\alpha}\right)t^{2s+25k}\right]=
\\
\left(\sum_{\alpha\in{\rm F}(r,\mathcal{S})}b_{\alpha_1,0}\lambda_{\alpha}\right)t^{2r}+
\mbox{ terms in }t\mbox{ of higher degree}.
\end{multline*}
If $\tilde{\rho}(g)=t^{20}+t^{45}+t^{70}$, then, necessarily, $r=10$. 
Since $\fac(10,\nums)=\{(0,0,2),(2,1,0)\}$, then $g^{\sigma}=
\lambda z^2+\mu x^2y$, 
for some $\lambda,\mu\in\mbz_{(2)}$, and $\tilde{\rho}(g^{\sigma})=
(\lambda +\mu)t^{20}+2\mu t^{45}+\mu t^{70}$. 
Note that $45=2\cdot 10+25$ and that 
there is no $s\geq 11$ with $2s+25k=45$, because $2s$ is even, $45$ is odd, so $k$ should be non-zero and 
then $2s+25k\geq 47>45$. Therefore,  the only coefficient of $t^{45}$ in $\tilde{\rho}(g)$ comes from 
$\tilde{\rho}(g^{\sigma})$. Thus, $2\mu=1$, 
a contradiction because $\mu$ is in $\mbz_{(2)}$. This implies 
\begin{eqnarray*}
\overline{P}\not\subset (\tilde{P},2)\mbz_{(2)}\lser x,y,z\rser/2\mbz_{(2)}\lser x,y,z\rser,
\end{eqnarray*} 
showing that the equality in \cite[page 62, line 1]{sally} is not correct.

Observe that when passing to characteristic $2$ it emerges the element $g_1\in\overline{P}$, whose
$\sigma$-order is smaller than the $\sigma$-order of $\overline{f_1}$, $\overline{f_2}$, $\overline{f_3}$, $\overline{f_4}$. The equalities $\overline{f_2}=xg_{1}$, $\overline{f_3}=yg_1$ and $\overline{f_4}=zg_1$, reveal that $f_2$, $f_3$ and $f_4$ can be substituted by $g_1$. ``This'' is the reason why the minimal number of generators of $P$ might decrease when passing to characteristic $p$. 
\end{remark}

\begin{discussion}\label{discussion-3} 
Suppose that $\charac(\mbk)=3$. Let us find $h_1,h_2,h_3\in P$, $h_i\neq 0$, such that
$h_1^{\sigma}\in V_{9}$, $h_2^{\sigma}\in V_{15}$ and $h_3^{\sigma}\in V_{16}$. 
Now $s=\min\{\sord(f)\mid f\in P,\; f\neq 0\}=9$ and $\xi=3$. Theorem~\ref{moh43-2} suggests 
to find elements whose $\sigma$-leading form are in $V_{9}$, $V_{10}$ and $V_{11}$. However, 
$W_{10}=\langle z^2,x^2y\rangle=\langle\mon^{\alpha},\mon^{\beta}\rangle$, with 
$\alpha=(0,0,2)$ and $\beta=(2,1,0)$, and where $b_{\alpha_1,1}=0$ and $b_{\beta_1,1}=2$.
By Lemma~\ref{lemmaVr0}, $V_{10}=\{0\}$. Similarly, 
$W_{11}=\langle xy^2,x^2z\rangle=\langle\mon^{\alpha},\mon^{\beta}\rangle$, with $\alpha=(1,2,0)$ and 
$\beta=(2,0,1)$, and where $b_{\alpha_1,1}=1$ and $b_{\beta_1,1}=2$. 
By Lemma~\ref{lemmaVr0}, $V_{11}=\{0\}$. In other words, the only element in a minimal generating set 
of $P$ that we can deduce from Theorem~\ref{moh43-2} is $h_1$. In Remark~\ref{remark-v12v13} 
we see that $V_{12}$, $V_{13}$ are non-zero, but they do not lead to a minimal generating set of $P$.

By Lemma~\ref{lemma-s}, we can pick $h_1=\underline{yz-x^3}-y^6z^2+xy^4z^3$. With the notations as in 
Remark~\ref{remark-sally1}, $\overline{f_1}=-xh_1$, $\overline{f_2}=-yh_1$, $\overline{f_3}=zh_1$.

Take $h_2^{\sigma}=az^3+bxy^3+cx^2yz\in W_{15}$, omitting again the $x^5$ for the sake of simplicity. Then,
\begin{eqnarray*}
\rho(h_2^{\sigma})=(a+b+c)t^{30}+(b+2c)t^{55}+ct^{80}.    
\end{eqnarray*}
Choose $(a,b,c)=(1,1,1)$, so $h_2^{\sigma}=z^3+xy^3+x^2yz$ and $\rho(h_2^{\sigma})=t^{80}$. So we can pick
$h_2^{\tau}=-y^5z^4$. Therefore, let $h_2=\underline{z^3+xy^3+x^2yz}-y^5z^4$.

Take $h_3^{\sigma}=ay^4+bxy^2z+cx^2z^2\in W_{16}$, omitting the $x^4y$ for the sake of simplicity. Then, 
\begin{eqnarray*}
\rho(h_3^{\sigma})=(a+b+c)t^{32}+(b+2c)t^{57}+ct^{82}.    
\end{eqnarray*}
Choose $(a,b,c)=(1,1,1)$, so $h_3^{\sigma}=y^4+xy^2z+x^2z^2$ and $\rho(h_3^{\sigma})=t^{82}$. So we can pick
$h_3^{\tau}=-y^4z^5$. Therefore, let $h_3=\underline{y^4+xy^2z+x^2z^2}-y^4z^5$.
\end{discussion}

\begin{theorem}\label{theo:char3} 
If $\charac(\mbk)=3$, then $P$ is minimally generated by 
\begin{eqnarray*}
&&h_1=\underline{yz-x^3}-y^6z^2+xy^4z^3,\phantom{+}
h_2=\underline{z^3+xy^3+x^2yz}-y^5z^4,\phantom{+}
h_3=\underline{y^4+xy^2z+x^2z^2}-y^4z^5.  
\end{eqnarray*} 
\end{theorem}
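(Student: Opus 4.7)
The plan is to follow the templates of Theorems~\ref{theo:char0} and~\ref{theo:char2}. Set $I=(h_1,h_2,h_3)$; by Discussion~\ref{discussion-3}, $I\subseteq P$. The goal is the reverse inclusion together with minimality, and the cleanest split is: first prove $P=I$ (hence $\mu(P)\leq 3$) by the length comparison of Theorem~\ref{theo:char0}, then prove $\mu(P)\geq 3$ by a direct $\sigma$-order argument.

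For $P=I$, Steps~4 and~5 of Theorem~\ref{theo:char0} (the integral-closure/valuation argument yielding $\length_R(R/(P+yR))=\nu(y)=8$) carry over verbatim, since $\rho(y)=t^8$ is independent of characteristic. The only computation that changes is the length of $R/(I+yR)$: modulo $y$ the generators reduce to $h_1\equiv -x^3$, $h_2\equiv z^3$, $h_3\equiv x^2z^2$, so with $S=\mbk\lser x,z\rser$ and $\overline{J}=(x^3,z^3,x^2z^2)$ one checks that $\mfm_S^4\subseteq \overline{J}$ and that $\mfm_S^3/\overline{J}=\langle x^2z,xz^2\rangle$ is $2$-dimensional over $\mbk$; combined with $\length_S(S/\mfm_S^3)=6$ this gives $\length_R(R/(I+yR))=8$. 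Nakayama applied to the sequence obtained from $0\to P/I\to R/I\to R/P\to 0$ after tensoring with $R/yR$ then forces $P=I$.

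The main obstacle is the lower bound $\mu(P)\geq 3$: in this characteristic Theorem~\ref{moh43} only yields $\mu(P)\geq \dim V_9=1$, since $V_{10}=V_{11}=0$. I would therefore prove directly that $h_1,h_2,h_3$ are linearly independent modulo $\mfm P$. Because $\sord(\mfm P)\geq 3+9=12$ while $\sord(h_1)=9$, $\sord(h_2)=15$, $\sord(h_3)=16$, any relation $a_1h_1+a_2h_2+a_3h_3\in\mfm P$ with $a_i\in\mbk$ forces $a_1=0$ at once. For the remaining coefficients write $q=a_2h_2+a_3h_3=xq_1+yq_2+zq_3$ with $q_i\in P$, and let $(q_i)_s$ denote the sum of all monomial terms of $q_i$ of $\sigma$-order $s$; then $q^\sigma=x(q_1)_{r-3}+y(q_2)_{r-4}+z(q_3)_{r-5}$ with $r=\sord(q)$. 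The key technical point is that for $s\leq 13<34$ the coefficients of $t^{2s}$ and $t^{2s+25}$ in the identity $\rho(q_i)=0$ depend only on $(q_i)_s$ (the next possible ``wrap-around'' would demand a non-zero $(q_i)_{s-25}$, impossible since $s-25<9$), so Lemma~\ref{lemmaVr0}(2) forces $(q_i)_s$ to lie in the same linear subspace of $W_s$ that contains $V_s$. In characteristic~$3$ this gives $(q_i)_{10}=(q_i)_{11}=0$, $(q_i)_{12}\in\langle xyz-x^4\rangle$, and $(q_i)_{13}\in\langle y^2z-x^3y\rangle$.

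The two remaining contradictions are then explicit. If $a_2\neq 0$ then $r=15$ and $q^\sigma=x(q_1)_{12}\in\langle x^2yz-x^5\rangle$; this is impossible because $q^\sigma=a_2(z^3+xy^3+x^2yz)$ contains the monomial $z^3$. Hence $a_2=0$. If $a_3\neq 0$ then $r=16$ and $q^\sigma=x(q_1)_{13}+y(q_2)_{12}\in\langle xy^2z-x^4y\rangle$, since $x(y^2z-x^3y)=y(xyz-x^4)=xy^2z-x^4y$ makes the two summands coincide; but $q^\sigma=a_3(y^4+xy^2z+x^2z^2)$ contains $y^4$, a contradiction. So $a_3=0$, and $\{h_1,h_2,h_3\}$ is a minimal system of generators; combined with the equality $P=I$ proved above, we conclude $\mu(P)=3$.
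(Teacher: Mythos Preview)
Your proof is correct. The equality $P=I$ follows the paper's template exactly: Steps~4--5 and the Conclusion are characteristic-free, and your computation $\length_R(R/(I+yR))=8$ via $\overline{J}=(x^3,z^3,x^2z^2)$ and $\mfm_S^3/\overline{J}=\langle x^2z,xz^2\rangle$ matches the paper's Step~3 verbatim.

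Where you diverge from the paper is in the minimality argument. The paper's Step~1 is elementary and ad hoc: it observes that $h_1,h_2\in(x,z)$ while $h_3\notin(x,z)$, and $h_1,h_3\in(x,y)$ while $h_2\notin(x,y)$; then, assuming $\mu(P)=2$ and $I=P$, it writes $P=(h_1,f)$, substitutes, and reduces to either $P=(h_1,h_2)$ or $P=(h_1,h_3)$, each contradicted by the containments just noted. Your route stays entirely within the $\sigma$-order framework: you show that $h_1,h_2,h_3$ are $\mbk$-independent modulo $\mfm P$ by proving that for any $p\in P$ and any $s<34$ the graded piece $p_s\in W_s$ must satisfy the two linear constraints from Lemma~\ref{lemmaVr0}(2) (because no other graded piece of $p$ can contribute to the coefficients of $t^{2s}$ and $t^{2s+25}$ in $\rho(p)=0$), and then track what this forces on $(q_i)_{10},\ldots,(q_i)_{13}$ in a decomposition $q=xq_1+yq_2+zq_3\in\mfm P$. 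The paper's argument is shorter and needs nothing beyond the visible ideal containments; yours is more structural, effectively upgrading Lemma~\ref{lemmaVr0}(2) from a statement about $\sigma$-leading forms to one about all low-order graded components of elements of $P$, a refinement that could be useful in situations where such convenient containments are not available.
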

\begin{proof}
Set $I=(h_1,h_2,h_3)$. By Discussion~\ref{discussion-3}, $I\subseteq P$. 

Here {\sc Step 1} is more laborious than in Theorem~\ref{theo:char2}. By Lemma~\ref{lemma-s}, $s=9$. 
By Theorem~\ref{moh43-2}, since $\xi=3$, it follows that $h_1$ is part of a minimal system of generators of $P$ and $\mu(P)\geq 1$. Since $h_1\in (x,y)$ and $h_2=z^3+f$, with $f\in (x,y)$, it follows that $h_2\not\in (h_1)$. Similarly, since $h_1\in (x,z)$ and $h_3=y^4+g$, with $g\in (x,z)$, it follows that $h_3\not\in (h_1)$. If $\mu(P)=1$, then $P=(h_1)$ and, since $h_2,h_3\in P$, then $h_2,h_3\in (h_1)$, a contradiction. Therefore, $\mu(P)>1$.
Suppose that $\mu(P)=2$. Then, there exist $f\in R$ such that $h_1,f$ is a minimal system of generators of $P$. Since $h_2,h_3\in P=(h_1,f)$, it follows that $h_2=a_1h_1+a_2f$, $h_3=b_1h_1+b_2f$, where $a_i,b_i\in R$, 
$1\leq i\leq 2$. Since $h_2,h_3\not\in(h_1)$, it follows that $a_2\neq0$ and $b_2\neq 0$. Moreover, if $I=P$, then $f=r_1h_1+r_2h_2+r_3h_3$, for some $r_i\in R$, $1\leq i\leq 3$. Clearly, $(r_2,r_3)\neq (0,0)$, otherwise $f\in(h_1)$, a contradiction. Substituting the expressions of $h_2$ and $h_3$ in the equality $f=r_1h_1+r_2h_2+r_3h_3$, we obtain $(1-r_2a_2-r_3b_2)f=(r_1+r_2a_1+r_3b_1)h_1$. If $1-r_2a_2-r_3b_2$ is a unit, then $f\in(h_1)$, a contradiction. Hence, $1-r_2a_2-r_3b_2$ is not a unit, i.e., 
 $1-r_2a_2-r_3b_2\in\mfm$. Therefore, either $r_2a_2\not\in\mfm$, or else $r_3b_2\not\in\mfm$. 
 In the first case, $a_2$ is a unit, $f\in (h_1,h_2)$, $P=(h_1,f)\subseteq (h_1,h_2)\subseteq I=P$, 
 so $h_3\in I=(h_1,h_2)\subset (x,z)$, which is a contradiction, because $h_3\not\in (x,z)$. In the second case, 
 $b_2$ is a unit, $f\in (h_1,h_3)$, $P=(h_1,f)\subseteq (h_1,h_3)\subseteq I=P$, so $h_2\in I=(h_1,h_3)\subset (x,y)$, which is a contradiction,
 because $h_2\not\in (x,y)$. Thus, $\mu(P)>2$. But, if $I=P$, then $\mu(P)\leq 3$. 
 So, $\mu(P)=3$ and $h_1,h_2,h_3$ is a minimal generating set of $P$.

{\sc Step 2}. 
Now $(h_1,h_2,y)=(x^3,z^3,y)$ and $x^3,z^3,y$ is a regular sequence. Then, proceed as in Theorem~\ref{theo:char0}.

{\sc Step 3}. Observe that $J=I+yR=(x^3,z^3,x^2z^2,y)$ and $\overline{J}=J/yR=(x^3,x^2z^2,z^3)$. 
Note that $\mfm_S^4\subset \overline{J}\subset\mfm_S^{3}$, where $S=R/yR$ and $\mfm_S=\mfm/yR$. 
For $0\leq i\leq 2$, and understanding $\mfm_S^0=S$, consider the short exact sequences:
\begin{eqnarray*}
0\rightarrow \mfm_S^{i+1}/\overline{J}\rightarrow\mfm_S^{i}/\overline{J}\rightarrow\mfm_S^{i}/\mfm_S^{i+1}\rightarrow 0.
\end{eqnarray*}
By the additivity of the length, $\length_{S}(S/\overline{J})=6+\length_S(\mfm_S^3/\overline{J})$. Since $\mfm_S^4\subset \overline{J}\subset\mfm_S^{3}$, it follows that $\mfm_S^{3}/\overline{J}=\langle x^2z,xz^2\rangle$ is a two-generated $S/\mfm_S$-vectorial space. Thus, $\length_S(S/\overline{J})=8$.

The rest is an in Theorem~\ref{theo:char0}. 
\end{proof}

\begin{remark}\label{remark-v12v13}
In Discussion~\ref{discussion-3}, we could have looked for elements $\hat{h}_2$ and $\hat{h}_3$, such that $\hat{h}_2^{\sigma}\in V_{12}$ and $\hat{h}_3^{\sigma}\in V_{13}$. By Lemma~\ref{lemmaVr0}, 
we could have taken, for instance, 
\begin{eqnarray*}
\hat{h}_2=\underline{-xyz+x^4}-xy^6z^2+y^3z^5-y^3z^{10}\phantom{+}\mbox{ and }\phantom{+}
\hat{h}_3=\underline{-y^2z+x^3y}+y^2z^6-xz^7.
\end{eqnarray*}
However, $(h_1,\hat{h}_2,\hat{h}_3)\subsetneq P=(h_1,h_2,h_3)$. Indeed, $h_2$ can be written as $h_2=z^3+g$, where $g\in (x,y)$ and $(h_1,\hat{h}_2,\hat{h}_3)\subset (x,y)$, thus $h_2\not\in (h_1,\hat{h}_2,\hat{h}_3)$, otherwise $z^3\in (x,y)$.
\end{remark}

\begin{remark}\label{remark-sally3} {\sc Sally's diagram (III).}
Theorem~\ref{theo:char3} says that $\overline{P}$ is minimally generated by the polynomials $h_1,h_2,h_3$. 
Moreover, we have seen that $\overline{f_1}=-xh_1$, $\overline{f_2}=-yh_1$, $\overline{f_3}=zh_1$ and $\overline{f_4}=h_2$. Thus, $\overline{f_1}$, $\overline{f_2}$ and $\overline{f_3}$ can be substituted by $h_1$. 
Nevertheless, $h_1$ and $h_2$ are not sufficient to generate $P$. We need an extra $h_3$, which is not a divisor of any of the $\overline{f_i}$.
\end{remark}

To finish the section we briefly treat the prime ideal $P_1$ of Moh.  
Recall that the prime ideals $P_n$ of Moh are defined as follows:
$n\geq 1$ is an odd integer, $m=(n+1)/2$, and $\lambda$ is an integer greater than $n(n+1)m$, with $\gcd(\lambda,m)=1$; $\rho_n:\mbk\lser x,y,z\rser\to\mbk\lser t\rser$ is the $\mbk$-algebra morphism with $\rho_n(x)=t^{nm}+t^{nm+\lambda}$, $\rho_n(y)=t^{(n+1)m}$ and $\rho_n(z)=t^{(n+2)m}$ and $P_n:=\ker(\rho_n)$. 
Thus, if $n=1$, then $m=1$, $\lambda$ is any integer with $\lambda\geq 3$, and $\rho(x)=t(1+t^{\lambda})$, $\rho(y)=t^2$ and $\rho(z)=t^3$. For an alternative approach to $P_1$, in characteristic zero, see \cite[Theorem~3.1]{mss}. Our two generators $f_1,f_2$, for the prime ideal $P_1$, which are valid in any characteristic, have been found with the help of {\sc Singular} \cite{dgps} and do not coincide, in general, with the ones in \cite{mss}.

\begin{proposition}\label{prop-casen1}
If $\lambda=2r+1$, with $r\geq 1$, then $P_1=(y-x^2+y^{r}z+xy^{r+1},z-xy+y^{r+2})$. If $\lambda=2r$, with $r\geq 2$, then $P_1=(y-x^2+y^{r+1}+xy^{r-1}z,z-xy+y^rz)$. In particular, $\mu(P_1)=2$, independently of the characteristic of the field $\mbk$.
\end{proposition}
\begin{proof}
As always, set $R=\mbk\lser x,y,z\rser$.
Let $f_1:=y-x^2+y^{r}z+xy^{r+1}$ and $f_2:=z-xy+y^{r+2}$, if $\lambda$ is odd; let
$f_1:=y-x^2+y^{r+1}+xy^{r-1}z$ and $f_2:=z-xy+y^rz$, if $\lambda$ is even. Set $I=(f_1,f_2)$. One can check that $I\subseteq P_1$. Since $(f_1,f_2,y)=(x^2,y,z)$ and $x^2,y,z$ is a regular sequence, it follows that $f_1,f_2$ is a regular sequence and that $\height(I)=2$. 
So, it suffices to prove that $I$ is a prime ideal. 
Let $\varrho:\mbk\lser x,y,z\rser\to\mbk\lser x,y\rser$ be the $\mbk$-algebra morphism defined as $\varrho(x)=x$, $\varrho(y)=y$ and $\varrho(z)=xy-y^{r+2}$, if $\lambda=2r+1$, or $\varrho(z)=xy(1+y^r)^{-1}$, if $\lambda=2r$. 
Clearly, $(f_2)\subseteq \ker(\varrho)$ and $f_2$ is irreducible. Since $R$ is factorial, $(f_2)$ is a prime ideal of height 1 contained in the height-one prime ideal $\ker(\varrho)$. Therefore, $(f_2)=\ker(\varrho)$ and
$\varrho$ induces a $\mbk$-algebra isomorphism $R/f_2R\cong \mbk\lser x,y\rser$. In particular, 
$R/I\cong\mbk\lser x,y\rser/(y-x^2+y^2g(x,y))$, for some $g(x,y)\in\mbk\lser x,y\rser$. Since
$y-x^2+y^2g(x,y)$ is irreducible and $\mbk\lser x,y\rser$ is factorial,
then $R/I$ is a domain. Hence, $I$ is prime. 
\end{proof}

\begin{remark}
A run with {\sc Singular} \cite{dgps} shows that, for $n=5$ and $\lambda=n(n+1)m+1$, then: 
$\mu(P_5)=6$ if $\charac(\mbk)=0$ or $p\geq 7$, $\mu(P_5)=3$ if $\charac(\mbk)=2$, $\mu(P_5)=2$ if $\charac(\mbk)=3$, and $\mu(P_5)=5$ if $\charac(\mbk)=5$. To prove it, one could reproduce our proof given for the case $n=3$. For $n=7$ or higher, it seems that the computation with {\sc Singular} stops and does not provide an answer, at least in characteristic zero.
\end{remark}

\section{Standard Bases}\label{section-standard}

The purpose of this section is to prove that the minimal generating sets of the prime ideal $P_3$ of Moh constructed in Theorems~\ref{theo:char0},~\ref{theo:char2} and ~\ref{theo:char3} are standard bases with respect to the negative degree reverse lexicographic order $>_{\rm ds}$. Concretely, we display the Mora standard representation for the $S$-polynomials (see \cite[Section~3]{mora1}, see also \cite[Definition~2.3.8 and Theorem~4.4.16]{ds} and \cite[Definition~1.6.9 and Algorithm~1.7.6]{gp}). We start by recalling some definitions and results. We mainly follow 
 \cite[Sections~1.5 to 1.7 and 6.4]{gp} and \cite[Section~4.4]{ds}.

\begin{notation}\label{notation-ord}
A {\em monomial ordering} $>$ is a total ordering on $\monset_d=\{\mon^{\alpha}\mid \alpha\in\mbn^d\}$ satisfying that if $\mon^\alpha>\mon^{\beta}$, then $\mon^{\gamma}\mon^{\alpha}>\mon^{\gamma}\mon^{\beta}$, for all $\alpha,\beta,\gamma\in\mbn^{d}$. 
A monomial ordering $>$ is said to be {\em local} if $\mon^\alpha<1$, for all $\alpha\neq(0,\ldots,0)$. A local monomial ordering $>$ is a {\em local degree ordering} if $\deg(\mon^\alpha)<\deg(\mon^\beta)$ implies that $\mon^\alpha>\mon^\beta$. A non-zero polynomial $f$ can be written $f=\sum_{\nu=0}^{n} a_{\nu}\mon^{\alpha(\nu)}$, $a_\nu\in\mbk\setminus\{0\}$, where $\mon^{\alpha(0)}>\ldots>\mon^{\alpha(n)}$. The {\em leading term} of $f$ is $\lterm(f)=a_{0}\mon^{\alpha(0)}$ and the {\em leading monomial} of $f$ is 
$\lmon(f)=\mon^{\alpha(0)}$. 

Let $>$ be a monomial ordering. Let $\mbk[\monx]_{>}:=\{f/u\mid f\in\mbk[\monx],u\in S_{>}\}$ be 
the localization of $\mbk[\monx]$ with respect to the multiplicative closed set 
$S_{>}:=\{u\in\mbk[\monx]\setminus\{0\}\mid\lmon(u)=1\}$. If $>$ is local, then $\mbk[\monx]_{>}=\mbk[\monx]_{(\underline{\rm x})}$. For $f\in \mbk[\monx]_>$, choose $u\in\mbk[\monx]$ such that $\lterm(u)=1$ and $uf\in\mbk[\monx]$. Then $\lterm(f):=\lterm(uf)$ and $\lmon(f):=\lmon(uf)$. 

Given a subset $G\subset\mbk[\monx]_>$, the {\em leading ideal} of $G$ is defined as $L(G)=\langle \lmon(g) \mid g\in G, g\neq 0\rangle_{\Bbbk[\underline{\rm x}]}$, which is seen as an ideal of $\mbk[\monx]$. Let $I$ be an ideal of $\mbk[\monx]_>$. A finite subset set $G\subset\mbk[\monx]_>$ is a {\em standard basis} of $I$ if $G\subset I$ and $L(I)=L(G)$. In the non-local case, a standard basis is also called a Gr\"obner basis.
\end{notation}

\begin{definition}
Let $>$ be a  monomial ordering. Let $\mathcal{G}$ be the set of all finite lists in $\mbk[\monx]_{>}$. A {\em weak normal form} on $\mbk[\monx]_>$ is a map $\nf:\mbk[\monx]_>\times\mathcal{G}\to\mbk[\monx]_>$ satisfying: 
\begin{itemize}
\item[$(0)$] $\nf(0\mid G)=0$ for all $G\in\mathcal{G}$;
\item[$(1)$] If $\nf(f\mid G)\neq 0$, then $\lmon(\nf(f\mid G))\not\in L(G)$, for all $f\in\mbk[\monx]_>$ and all $G\in\mathcal{G}$;
\item[$(2)$] For all $f\in\mbk[\monx]_>$ and all $G=g_1,\ldots, g_m$, $G\in\mathcal{G}$, there exist $u\in(\mbk[\monx]_>)^*$ such that $uf$ has a {\em standard representation} with respect to $\nf(-\mid G)$, that is, $uf-\nf(uf\mid G)=\sum_{i=1}^{m}a_ig_i$, with $a_i\in 
\mbk[\monx]_{(\underline{\rm x})}$ satisfying that 
$\lmon(\sum_{i=1}^{m}a_ig_i)\geq\lmon(a_ig_i)$, for all $i$ such that $a_ig_i\neq 0$.
\end{itemize}
A weak normal form is called {\em polynomial} if whenever $f\in\mbk[\monx]$ and $G$ is a list in $\mbk[\monx]$, there exists a $u\in(\mbk[\monx]_>)^*\cap\mbk[\monx]$ such that $uf$ has a standard representation with $a_i\in\mbk[\monx]$, for all $1\leq i\leq m$. 

Fixed a list $G$, a {\em weak normal form with respect to $G$} is a map $\nf:\mbk[\monx]_{>}\to\mbk[\monx]_{>}$, $f\mapsto \nf(f\mid G)$, satisfying $(0)$, $(1)$ and $(2)$, where $G$ is fixed. Similarly, on can define a {\em polynomial weak normal form with respect to $G$}.
\end{definition}

\begin{definition}
Let $>$ be any monomial ordering. Let $f\in\mbk[\monx]$ and let $G=g_1,\ldots,g_m$ 
be a finite list of $\mbk[\monx]$. 
The {\em Mora normal form algorithm} returns polynomials $u,a_i,h\in\mbk[\monx]$, such that $\lmon(u)=1$,
$uf-h=\sum_{i=1}^{m}a_ig_i$, where $\lmon(f)\geq\lmon(a_ig_i)$ for all $a_ig_i\neq 0$ and, if $h\neq 0$, then $\lmon(h)$ is not divisible by any $\lmon(g_i)$ (see, e.g., \cite[Theorem~4.4.16]{ds} and \cite[Algorithm~1.7.6]{gp}). The reminder $h$ of this expression is called the {\em Mora Normal Form} of $f$ with respect to $G$ and will be denote it as $\nfmora(f\mid G)$. 
\end{definition}

\begin{remark}
In fact, $\nfmora:\mbk[\monx]_{>}\to\mbk[\monx]_{>}$ is a polynomial weak normal form on $\mbk[\monx]_{>}$ with respect to a fixed list $G$ in $\subset\mbk[\monx]$. Indeed, given $f\in\mbk[\monx]_{>}$, write
$f=g/u$, where $g,u\in\mbk[\monx]$, $u\neq 0$, $\lmon(u)=1$, which is unique up to scalars. Then, one defines
$\nfmora(f\mid G)=\nfmora(g\mid G)/u$. 
\end{remark}

\begin{remark}
Let $>$ be a local degree ordering on $\monset_d$. A non-zero element $f\in\mbk\lser\monx\rser$ can be written as $f=\sum_{\nu\geq 0}^{\infty}a_{\nu}\mon^{\alpha(\nu)}$, $a_\nu\in\mbk\setminus\{0\}$, where $\mon^{\alpha(\nu)}>\mon^{\alpha(\nu+1)}$. The {\em leading term} of $f$ is defined as $\lterm(f)=a_{0}\mon^{\alpha(0)}$ and the {\em leading monomial} of $f$ as $\lmon(f)=\mon^{\alpha(0)}$. Similarly as before, given a subset $G\subset\mbk\lser\monx\rser$, the {\em leading ideal} of $G$ is defined as $L(G)=\langle G\rangle_{\mbk\lser\underline{\rm x}\rser}$, seen as an ideal in $\mbk\lser\monx\rser$. Furthermore, $G$ is a standard basis of an ideal $I\subset\mbk\lser\monx\rser$ if $G\subset I$ and $L(I)=L(G)$. 
\end{remark}

\begin{proposition}\label{prop:groeb}
Let $F=\{f_1,f_2,f_3,f_4\}$ be defined as in Theorem~\ref{theo:char0}. If $\charac(\mbk)=0$ or if $\charac(\mbk)\geq 5$, then $F$ is a standard basis of $P_3$ with respect to the negative degree reverse lexicographic order $>_{\rm ds}$. 
\end{proposition}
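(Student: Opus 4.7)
The plan is to verify the Buchberger--Mora criterion for standard bases under the local ordering $>_{\rm ds}$. First I would compute the leading monomials of the $f_i$: each $f_i$ is already written with its smallest-degree monomial first (ties broken by reverse lex, so for instance $y^3 >_{\rm ds} xyz$ because $(0,3,0)$ has smaller last coordinate than $(1,1,1)$), giving
\begin{eqnarray*}
\lmon(f_1)=y^3,\quad \lmon(f_2)=y^2z,\quad \lmon(f_3)=yz^2,\quad \lmon(f_4)=z^3.
\end{eqnarray*}
Hence $L(F)=(y^3,y^2z,yz^2,z^3)=(y,z)^3R$, and the inclusion $L(F)\subseteq L(P)$ is automatic from $F\subset P$; the task is to establish the reverse inclusion.

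By the Buchberger--Mora criterion (see, e.g., \cite[Algorithm~1.7.6]{gp}), $F$ is a standard basis of $P$ as soon as, for every pair $1\leq i<j\leq 4$, the $S$-polynomial
\begin{eqnarray*}
S(f_i,f_j)=\frac{\mathrm{lcm}(\lmon(f_i),\lmon(f_j))}{\lterm(f_i)}\,f_i-\frac{\mathrm{lcm}(\lmon(f_i),\lmon(f_j))}{\lterm(f_j)}\,f_j
\end{eqnarray*}
admits a standard representation with respect to $F$, equivalently $\nfmora(S(f_i,f_j)\mid F)=0$. Of the six pairs, the pair $(f_1,f_4)$ is eliminated by the product criterion since $\gcd(y^3,z^3)=1$; the remaining five pairs require explicit verification.

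For each such pair I would execute Mora's normal form algorithm and exhibit the resulting cofactors $a_k^{(i,j)}\in R$ with $S(f_i,f_j)=\sum_{k=1}^{4}a_k^{(i,j)}f_k$ and $\lmon(a_k^{(i,j)}f_k)\geq\lmon(S(f_i,f_j))$ for every nonzero summand. For example, $S(f_1,f_2)=\tfrac{z}{3}f_1-\tfrac{y}{2}f_2$ has leading monomial $xyz^2=x\cdot\lmon(f_3)$; subtracting $\tfrac{x}{6}f_3$ cancels it and leaves a tail whose new leading term $-y^3z^6$ is in turn killed by $\tfrac{z^6}{3}f_1$, and the process continues down to zero. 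The other four pairs are handled in the same way.

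The main obstacle is the combinatorial bookkeeping. Because $>_{\rm ds}$ is a local degree ordering, Mora's algorithm may have to invoke pseudo-divisors (elements of $F$ multiplied by units of $R$ to lower their ecart) to guarantee termination, and the inequality $\lmon(a_k^{(i,j)}f_k)\geq\lmon(S(f_i,f_j))$ must be rechecked at every reduction step. Once the five standard representations have been displayed explicitly, the Buchberger--Mora criterion yields $L(P)=L(F)=(y,z)^3R$, and $F$ is a standard basis of $P$ under $>_{\rm ds}$.
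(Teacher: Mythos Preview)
Your plan is exactly the paper's: apply Buchberger's criterion by checking $\nfmora(S(f_i,f_j)\mid F)=0$ for all pairs (the paper verifies all six rather than dismissing $(f_1,f_4)$ via the product criterion, but your shortcut there is legitimate). Two small corrections: the standard-representation inequality should read $\lmon(S(f_i,f_j))\geq\lmon(a_kf_k)$, not the reverse; and Mora's algorithm handles rising ecart by appending the \emph{current remainder} to the divisor list, not by multiplying elements of $F$ by units---this is precisely the mechanism that eventually produces the unit $u$ in the weak standard representation. The paper carries the reductions out in full and finds that $S(f_1,f_2)$ already requires a nontrivial unit $u=1-z^5-\tfrac{2}{3}xy^3z^2-\tfrac{11}{9}x^2yz^3+\tfrac{2}{9}x^5z^2$, so your ``the process continues down to zero'' conceals a fifteen-step Mora reduction rather than the two steps your sketch suggests.
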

\begin{proof}
Let $J$ be the ideal of $\mbk[\monx]$ generated by $F=\{f_1,f_2,f_3,f_4\}$. By \cite[Theorem 6.4.3]{gp}, if $F$ is a standard basis of $J$, then $F$ is a standard basis of $J\mbk\lser\monx\rser=P_3$. Hence, it suffices to prove that $F$ is a standard basis of $J$. So we need to
check that $\nfmora(S(f_i,f_j)\mid F)=0$ for $1\leq i,j\leq 4$ (see \cite[Theorem 1.7.3 (Buchberger’s criterion)]{gp}). Using the Mora normal form algorithm (see Remark below), we prove that: 
\begin{eqnarray*}
u\kern-0,8em&S(f_1,f_2)=&\kern-0,8em\left(-z^6+y^5z^2+(4/3)xy^3z^3+(16/9)x^2yz^4-(1/3)x^4y^2z^2+(2/9)x^5z^3\right)f_1\\
&&\kern-1,4em-(22/9)x^3z^4f_2+\left((1/2)x-(9/2)xz^5-(1/3)x^2y^3z^2-(11/18)x^3yz^3\right)f_3+(5/3)x^4z^3f_4;\\
&S(f_1,f_3)=&\kern-0,8em3x^2yf_1-3x^3f_2-(9/2)xzf_3+zS(f_1,f_2);\\
&S(f_1,f_4)=&\kern-0,8em2xy^3f_1-x^3zf_2-\left(2xz^2+(2/3)x^3y\right)f_3+x^4f_4+\left((4/9)z^2+(4/3)x^2y\right)S(f_1,f_2)\\&&\kern-0,8em+(5/9)zS(f_1,f_3);\\
&S(f_2,f_3)=&\kern-0,8em-3xf_4; \\
&S(f_2,f_4)=&\kern-0,8em\left((4/3)xy^2-(8/9)x^2z\right)f_1-3xzf_4+(8/9)x^2S(f_1,f_2);\\   
&S(f_3,f_4)=&\kern-0,8em(2/3)xyf_1-(2/3)x^2f_2;
\end{eqnarray*}
where $u=1-z^5-(2/3)xy^3z^2-(11/9)x^2yz^3+(2/9)x^5z^2$. It follows that $\nfmora(S(f_1,f_2)\mid F)=\nfmora(S(f_2,f_3)\mid F)=\nfmora(S(f_3,f_4)\mid F)=0$. By multiplying the second equality by $u$ and 
substituting the value of $uS(f_1,f_2)$ from the first equality, we get $\nfmora(S(f_1,f_3)\mid F)=0$. Similarly, 
one can check that $\nfmora(S(f_1,f_4)\mid F)=\nfmora(S(f_2,f_4)\mid F)=0$.
\end{proof}
\begin{remark}
To compute the explicit Mora standard representation for the $S$-polynomials, we have replicated the Mora’s Division Algorithm with Singular \cite{dgps}. To our knowledge, this algorithm was originally stated in \cite[Section 3]{mora1}. We follow \cite[Algorithm 1.7.6]{gp}. We use the library \verb+teachstd.lib+ which provides the computations for the $S$-polynomials and the minimal \verb+ecart+. The input of the algorithm is a polynomial \verb|f| and a finite list of polynomials \verb|I|, and the output is the Mora normal form of \verb|f| with respect to \verb|I| denoted by \verb|h|. In each iteration, we store the divisors in the list \verb+D+ and the quotients in the list \verb+Q+. Since the Mora's Division Algorithm adds a new element in \verb|I| in certain iterations, we keep track of it in the liest \verb+L+. Let us see a more detailed explanation on how to use the code to recover the Mora standard representation for the $S(f_1,f_2)$. 
\begin{itemize}
\item Define the ring $\mbk[x,y,z]_{>}$ where $>$ is the $\ds$ ordering. 
\begin{verbatim}
LIB "teachstd.lib";
ring R=0,(x,y,z),ds; 
\end{verbatim}
\item Define the subset of polynomials $F=\{f_1,f_2,f_3,f_4\}$ as an ideal. 
\begin{verbatim}
poly f1=3y3-4xyz+x4-3y3z5-2xy6z2-x2y4z3;
poly f2=2y2z-3xz2+x3y-2y7z2-xy5z3;
poly f3=yz2-3x2y2+2x3z-y6z3-2xy4z4;
poly f4=z3-2xy3+x2yz-y5z4;
ideal F=f1,f2,f3,f4;
\end{verbatim}
\item Define the $S$-polynomial $S(f_1,f_2)$.
\begin{verbatim}
poly s12=spoly(f1,f2);
\end{verbatim}
\item Define the empty lists \verb|D|, \verb|Q| and \verb|L|, initialize \verb|h| and run the algorithm code. 
\begin{verbatim}
list D;
list Q;
list L;
poly h=s12;
while(h!=0 && minEcart(F,h)!=0){
    poly d=minEcart(F,h);
    poly q=lead(h)/lead(minEcart(F,h));
    D=insert(D,d);
    Q=insert(Q,q);
    if(ecart(d)>ecart(h)){F=F+h;L=insert(L,h);}
        else{L=insert(L,0);}
    h=h-q*d;
};
\end{verbatim}
\item Run \verb|h;| to print the Mora normal form for $S(f_1,f_2)$ with respect to $F$ and check that is indeed zero.
\item Run any of the three lists to see how many steps the algorithm needed. In this case, it needed $15$ steps. The three lists are ordered such that the first element of the list is the last one to be inserted, and so on. 
\item Do the following iterative process step by step from $k=1$ to $k=15$. 
\\
STEP $k$
\begin{itemize}
\item Let $d_k=\verb|D[n-k+1]|$ and $q_k=\verb|Q[n-k+1]|$. 
\item Let $h_k=S(f_1,f_2)-\sum_{i=1}^{k}q_id_i$. 
\item If \verb|L[n-k+1]!=0|, then the algorithm has added $h_k$ as a new element of \verb|I|. 
\item Identify $d_{k}$ as an element of \verb|I|. Note that in previous steps, we could have added some new  elements in \verb|I|. Then, $d_k=f_j$ for some $1\leq j\leq 5$ or $d_k=h_i$ for some $1\leq i<k$.
\end{itemize}
\item In the last step, we get $h_n=S(f_1,f_2)-\sum_{i=1}^{n}q_id_i$ which is the Mora normal form of $S(f_1,f_2)$ and it is zero. Hence, $S(f_1,f_2)=\sum_{i=1}^{n}q_id_i$. Note that the elements $d_i$ are either $f_i$ for some $1\leq i\leq 4$ or $h_k$ for some $1\leq k\leq n$. Moreover, $h_1\in (f_1,f_2,f_3,f_4,S(f_1,f_2))$, $h_2$ 
is in $(f_1,f_2,f_3,f_4,s_{1,2},h_1)=(f_1,f_2,f_3,f_4,S(f_1,f_2))$. Recursively, $h_k\in (f_1,f_2,f_3,f_4,S(f_1,f_2))$. This means that $d_i\in (f_1,f_2,f_3,f_4,S(f_1,f_2))$ and so we can recover the Mora standard representation of $S(f_1,f_2)$ with respect to $F$. This is $uS(f_1,f_2)=\sum_{i=1}^{s}a_if_i$ where $u,a_i\in\mbk[\monx]$ for $1\leq i\leq 4$ and $\lmon(u)=1$.
\end{itemize}
After computing the Mora standard representation for $S(f_1,f_2)$, we repeat the process for $S(f_1,f_3)$, and so on, keeping the extended ideal \verb|F| from the previous step. This provides a more efficient way of computing the Mora standard representation for all the $S$-polynomials.
\end{remark}

\begin{proposition}
Let $G=\{g_1,g_2\}$ be defined as in Theorem~\ref{theo:char2}. If $\charac(\mbk)=2$, 
then $G$ is a standard basis of $P_3$ with respect to the negative degree reverse lexicographic order $>_{\rm ds}$. 
\end{proposition}
\begin{proof}
Using the Mora division algorithm we obtain: 
\begin{eqnarray*}
(1+y^5z+x^2yz^3)S(g_1,g_2)=(x^4+y^3z^5+y^8z)g_1+(x^2y+x^2yz^5)g_2. 
\end{eqnarray*}
Therefore, $\nfmora(S(g_1,g_2)\mid G)=0$.  
\end{proof}

\begin{proposition}
Let $H=\{h_1,h_2,h_3\}$ be defined as in Theorem~\ref{theo:char3}. If $\charac(\mbk)=3$, then $H$ is a standard basis of $P_3$ with respect to the negative degree reverse lexicographic order $>_{\rm ds}$. 
\end{proposition}
\begin{proof}
Using the Mora division algorithm we obtain: 
\begin{eqnarray*}
&S(h_1,h_2)&=-xh_3;\\
\left(1-z^5\right)\kern-0,8em&S(h_1,h_3)&=\left(-xyz+xyz^6\right)h_1+\left(-x^2-xy^2z^4\right)h_2+\left(z^6-y^5z^2+xy^3z^3+x^2yz^4\right)h_3;\\
&S(h_2,h_3)&=xyz^3h_1-xy^2zh_2+\left(xy^3+x^2yz\right)h_3+z^2S(h_1,h_3).
\end{eqnarray*}
So, $\nfmora(S(h_1,h_2)\mid H)=\nfmora(S(h_1,h_3)\mid H)=0$. Multiplying the third equality by $v=1-z^5$ and substituting the value of $vS(h_1,h_3)$, we get $\nfmora(S(h_2,h_3)\mid H)=0$. The rest follows as in Proposition~\ref{prop:groeb}.
\end{proof}

\section*{Acknowledgement}

It is a pleasure to thank the comments of the referee. 

{\small

}
\end{document}